\newtheorem{theorem}{Theorem}[section]
\newtheorem{proposition}[theorem]{Proposition}
\newtheorem{lemma}[theorem]{Lemma}
\theoremstyle{definition}
\newtheorem{definition}[theorem]{Definition}
\newtheorem{example}[theorem]{Example}
\newtheorem{proposition-definition}[theorem]{Proposition-Definition}
\newtheorem{corollary}[theorem]{Corollary}
\newtheorem{assumption}[theorem]{Assumption}
\newtheorem{setting}[theorem]{Setting}
\theoremstyle{remark}
\newtheorem{remark}[theorem]{Remark}
\numberwithin{equation}{section}
\def\TT{\mathbb{T}}
\def\QQ{\mathbb{Q}}
\def\Acal{\mathcal{A}}
\def\mod{\opname{mod}\nolimits}
\newcommand{\opname}[1]{\operatorname{\mathsf{#1}}}
\newcommand{\Gr}{\opname{Gr}\nolimits}
\newcommand{\Hom}{\opname{Hom}}
\newcommand{\de}{\opname{deg}}
\newcommand{\gd}{\opname{good}}
\newcommand{\Ext}{\opname{Ext}}
\newcommand{\Fac}{\opname{Fac}}
\newcommand{\Irr}{\opname{Irr}}
\newcommand{\add}{\opname{add}\nolimits}
\newcommand{\ie}{\emph{i.e.,}\ }
\newcommand{\confer}{\emph{cf.}\ }
\newcommand{\supp}{\opname{supp}}
\newcommand{\dom}{\opname{dom}}
\newcommand{\hd}{\opname{hd}}
\begin{document}

% \title[short text for running head]{full title}
\title{F-invariant in cluster algebras}

%    Only \author and \address are required; other information is
%    optional.  Remove any unused author tags.

%    author one information
% \author[short version for running head]{name for top of paper}
\author{Peigen Cao}
\address{School of Mathematical Sciences, University of Science and Technology of China, Hefei, 230026, People's Republic of China}
\email{peigencao@126.com}

%    \subjclass is required.
\subjclass[2020]{13F60, 16G20, 18M05}

\date{}

\dedicatory{Dedicated to Professor Fang Li on the occasion of his 60th birthday}

%\keywords{cluster algebra, string diagrams}

%    Abstract is required.
\begin{abstract}
We consider skew-symmetrizable (upper) cluster algebras with a compatible Poisson structure, called $\mathsf{\Lambda}$-(upper) cluster algebras. For any two good elements (e.g., cluster monomials) in a $\mathsf{\Lambda}$-upper cluster algebra, we introduce two invariants, called tropical invariant and $F$-invariant. We prove that (i) the product of two cluster monomials is still a cluster monomial if and only if their $F$-invariant is zero; (ii) if two cluster variables are log-canonical, then they are contained in the same cluster; and (iii) the notion of $F$-invariant for a pair of cluster monomials can be defined for any (upper) cluster algebra, 
regardless of whether it is a $\mathsf{\Lambda}$-(upper) cluster algebra.

When restricting to cluster monomials, we prove that the tropical invariant and $F$-invariant respectively coincide with the $\Lambda$-invariant and twice $\mathfrak{d}$-invariant in the monoidal cluster categorification using various monoidal subcategories of finite-dimensional modules over quantum affine algebras and quiver Hecke algebras; and we prove that the $F$-invariant coincides with the $E$-invariant in the additive cluster categorification using the theory of quivers with potentials.

Inspired by $F$-invariant, we introduce the dominant sets for seeds of cluster algebras as a replacement of torsion classes for $\tau$-tilting pairs in $\tau$-tilting theory. With the help of the dominant sets, we prove that the oriented exchange graphs of cluster algebras are acyclic. In particular, this implies that green mutations induce a partial order on the set of seeds (up to seed equivalence) of cluster algebras. We prove that the oriented exchange graphs of cluster algebras coincide with the Hasse quivers of the above posets of seeds.
\end{abstract}

\maketitle

%    Text of article.
\setcounter{tocdepth}{1}
\tableofcontents

\section{Introduction}
\subsection{Backgrounds} Cluster algebras were introduced by Fomin and Zelevinsky \cite{fz_2002}  as a combinatorial approach to the dual canonical bases (or upper global bases) of quantum groups and to the theory of total positivity in algebraic groups. Such algebras often arise as the coordinate rings of spaces, such as double Bruhat cells \cite{bfz_2005},  unipotent cells \cite{gls_2011}, double Bott-Samelson cells \cite{Shen-Weng-2021} and arise as the Grothendieck rings of certain monoidal subcategories of the representations of quantum affine algebras \cite{HL_2010,HL_2013,kkop-2024} and quiver Hecke algebras \cite{kkko-2018}.

A cluster algebra is a $\mathbb Z$-subalgebra of a rational function field generated by a special set of
generators called {\em cluster variables}, which are grouped into overlapping subsets,
called {\em clusters}. A {\em seed} is a pair consisting of a cluster and a rectangular
integer matrix with skew-symmetrizable principal part. One can obtain new seeds from a given one by a procedure called {\em mutation}.
The sets of cluster variables and clusters of a cluster algebra are determined by an initial seed and the iterative mutations.
 A {\em cluster momomial} is a monomial in variables from the same cluster. In the main text, we actually use a broader definition of cluster monomials, which allows the presence of the inverse of {\em frozen variables}.

A fundamental problem in cluster algebras is to characterize when two cluster variables are contained in the same cluster. A more general problem is to characterize when the product of two cluster monomials is still a cluster monomial. Many progresses have been made by introducing different type of compatibility degrees or invariants.

\subsubsection*{Compatibility degrees in cluster algebras} In the study of Y-systems and generalized associahedra, Fomin and Zelevinsky \cite{fz-2003y} introduce a function 
$$(-\mid\mid-) \colon\Phi_{\geq -1}\times \Phi_{\geq -1}\rightarrow\mathbb Z_{\geq 0}$$ defined on the set $\Phi_{\geq -1}$ of almost positive roots associated to a Cartan matrix $C$ of finite type.  Let $\mathcal A$ be a cluster algebra of finite type with the same Cartan-Killing type as the Cartan matrix $C$. Fomin and Zelevinsky \cite{fz_2003f} proved that there is a bijection 
from $\Phi_{\geq -1}$ to the set  $\mathcal X$ of (unfrozen) cluster variables of $\mathcal A$ and they proved that two cluster variables $x_{\alpha}$ and $x_{\beta}$ are contained in the same cluster  if and only if their corresponding roots $\alpha,\beta\in \Phi_{\geq -1}$ are {\em compatible},  \ie $(\alpha\mid\mid \beta)=0$.

Now let $\mathcal A$ be any skew-symmetrizable cluster algebra (not necessarily finite type). In \cite{cao-li-2020}, Li and the author of the present paper proved that there exists a function $$(-\mid\mid  -)_d \colon \mathcal X\times \mathcal X \rightarrow\mathbb Z_{\geq -1}$$ defined on the set of  (unfrozen) cluster variables $\mathcal X$ of $\mathcal A$, which is called the {\em $d$-compatibility degree}. The values of the $d$-compatibility degree $(-\mid\mid  -)_d$ are given by the components of the $d$-vectors \cite{fomin_zelevinsky_2007} in cluster algebras. It is proved that two cluster variables $x,z\in\mathcal X$ are contained in the same cluster if and only if $$(x\mid\mid z)_d=0\;\;\;\text{or}\;\;\;(x\mid\mid z)_d=-1,$$
moreover, $(x\mid\mid z)_d=-1$ if and only if $x=z$. This confirms a conjecture on $d$-vectors proposed by Fomin and Zelevinsky \cite[Conjecture 7.4]{fomin_zelevinsky_2007}.

Recently, Fu and Gyoda \cite{Fu-Gyoda} introduced the {\em $f$-compatibility degree} in cluster algebras using the components of $f$-vectors, which is a function
$$(-\mid\mid-)_f \colon \mathcal X\times \mathcal X\rightarrow \mathbb Z_{\geq 0}$$
defined on the set $\mathcal X$ of  (unfrozen) cluster variables of $\mathcal A$.
Based on the results in \cite{cao-li-2020}, Fu and Gyoda  proved that two cluster variables $x,z\in\mathcal X$ are contained in the same cluster if and only if $(x\mid\mid z)_f=0$. When restricted to cluster algebras of finite type, the $f$-compatibility degree coincides with Fomin-Zelevinsky's compatibility degree.

\subsubsection*{$\Lambda$-invariant and $\mathfrak{d}$-invariant} The 
$\Lambda$-invariant and $\mathfrak{d}$-invariant are introduced by Kang, Kashiwara, Kim and Oh \cite{kkko-2018} in the representation theory of quiver Hecke algebras and by Kashiwara, Kim, Oh and Park \cite{kkop-2020} in the representation theory of quantum affine algebras in their study of the monoidal categorification of (quantum) cluster algebras.

Here we briefly recall these notions in the quiver Hecke algebras case and we mainly follow \cite{kkko-2018,kk_2019}. Let $$R\text{-gmod}=\bigoplus_{\beta\in Q^+}R(\beta)\text{-gmod}$$
 be the category of finite-dimensional graded modules with
homomorphisms of degree $0$ given in \cite[Page 2270]{kk_2019}, which is a monoidal abelian category with the tensor product ${\circ}$ given there. Denote by
 $q$ the grading shift functor in $R\text{-gmod}$: for $M\in R\text{-gmod}$, we have $(qM)_i=M_{i-1}$ for $i\in\mathbb Z$. For each pair $(M,N)$ with $0\neq M\in R(\beta)\text{-gmod}$ and $0\neq N\in R(\gamma)\text{-gmod}$, 
 Kang, Kashiwara, Kim and Oh \cite[Definition 2.2.1]{kkko-2018} constructed a renormalized $r$-matrix ${\bf r}_{M,N}$, which is a morphism from $M{\circ}N$ to $q^{c}N{\circ} M$ in $R\text{-gmod}$ for some integer $c$. The {\em $\Lambda$-invariant} $\Lambda(M,N)$ of the pair $(M,N)$ is defined as the homogeneous degree of ${\bf r}_{M,N}$,  \ie
  $$\Lambda(M,N)=-c\;\in\mathbb Z.$$
The {\em $\mathfrak{d}$-invariant} $\mathfrak{d}(M,N)$ of the pair $(M,N)$ is defined as half of the symmetrized sum:
  $$\mathfrak{d}(M,N)=\frac{1}{2}(\Lambda(M,N)+\Lambda(N,M)).$$
It turns out that $\mathfrak{d}(M,N)\in\mathbb Z_{\geq 0}$, \confer \cite[Lemma 3.2.1]{kkko-2018}.

\begin{remark}
Denote by $R\text{-mod}^0$ the category obtained from $R\text{-gmod}$ by forgetting the gradings.  By construction of the renormalized $r$-matrices, we have \[
\Lambda(q^aM,q^bN)=\Lambda(M,N)\;\;\;\text{and}\;\;\;\mathfrak{d}(q^aM,q^bN)=\mathfrak{d}(M,N),\;\;\forall a,b\in\mathbb Z.
\]
Thus the $\Lambda$-invariant and $\mathfrak{d}$-invariant descend to the ungraded category $R\text{-mod}^0$.
\end{remark}

A simple object $S$ in a monoidal abelian category is said to be {\em real}, if the tensor product of $S$ with itself is still simple. Kang, Kashiwara, Kim and Oh proved that if $M$ is a real simple object in $R\text{-gmod}$, then $\mathfrak{d}(M,M)=0$, \confer \cite[Lemma 3.2.3]{kkko-2018}. In monoidal categorification of cluster algebras, each cluster monomial corresponds to a real simple object. In particular, the $\mathfrak{d}$-invariant associated to a  ``cluster monomial" is zero.

Now let us turn to the $\mathfrak{d}$-invariant in the representation theory of  quantum affine algebras. In this setting,  Kashiwara, Kim, Oh and Park \cite{kkop-2020} proved that (i) $\mathfrak{d}(M,N)$ takes non-negative values; (ii) if $M$ is a real simple object, then $\mathfrak{d}(M,M)=0$, \confer \cite[Proposition 3.16, Corollary 3.17]{kkop-2020}.

\subsubsection*{$E$-invariant}
The $E$-invariant is introduced by Derksen, Weyman and Zelevinsky \cite{DWZ10} in the additive categorification of cluster algebras using decorated representations of quivers with potentials. 
For any two decorated representations $\mathcal M$ and $\mathcal N$ of a quiver with potential $(Q,W)$,
 Derksen, Weyman and Zelevinsky \cite[Section 7]{DWZ10} defined an integer $E^{\rm inj}(\mathcal M,\mathcal N)$, which is called the {\em partial $E$-invariant} of the pair $(\mathcal M,\mathcal N)$ in this paper (see Definition \ref{def:g-F-E}).  The {\em $E$-invariant} $E^{\rm sym}(\mathcal M,\mathcal N)$ of the pair $(\mathcal M,\mathcal N)$ is  defined to be the symmetrized sum: 
 $$E^{\rm sym}(\mathcal M,\mathcal N)=E^{\rm inj}(\mathcal M,\mathcal N)+E^{\rm inj}(\mathcal N,\mathcal M)\;\;\in\mathbb Z.$$
It is known from \cite[Section 10]{DWZ10} (or \cite[Section 3]{CLS-2015}) that
\[E^{\rm inj}(\mathcal M,\mathcal N)\geq 0\;\;\;\text{and}\;\;\;
E^{\rm sym}(\mathcal M,\mathcal N)\geq 0.
\]
  Derksen, Weyman and Zelevinsky  \cite[Corollary 7.2]{DWZ10} proved that  $E^{\rm sym}(\mathcal M,\mathcal M)=0$ for any negative-reachable decorated representations,  \ie those decorated representations  corresponding to cluster monomials.

Inspired by Derksen-Weyman-Zelevinsky's $E$-invariant for quivers with potentials, \ie for Jacobian algebras, the $E$-invariant for any finite-dimensional algebra is introduced by  Derksen and Fei \cite{DK-2015} in the study of general presentations and by Adachi, Iyama and Reiten  \cite{air_2014} in the study of $\tau$-tilting theory.

\subsection{Results (I): Tropical invariant and $F$-invariant}
We consider skew-symmetrizable (upper) cluster algebras with a compatible Poisson structure, called {\em $\mathsf{\Lambda}$-(upper) cluster algebras}. In a $\mathsf{\Lambda}$-upper cluster algebra, each seed $({\bf x}_t,(\widetilde B_t)_{m\times n})$ is endowed with a skew-symmetric integer matrix $(\Lambda_t)_{m\times m}$, called {\em Poisson coefficient matrix} such that 
\begin{eqnarray}\label{eqn:S-0}
   \widetilde B_t^T\Lambda_t=(S\mid {\bf 0}) 
\end{eqnarray} for some fixed diagonal matrix $S=diag(s_1,\ldots,s_n)$ with $s_i>0$. We call the triple $({\bf x}_t,\widetilde B_t,\Lambda_t)$ a {\em $\mathsf{\Lambda}$-seed} of $\mathcal U$.

For any two good elements (e.g., cluster monomials) in a $\mathsf{\Lambda}$-upper cluster algebra, we introduce two invariants, called tropical invariant and $F$-invariant, which generalize the  $\Lambda$-invariant and $\mathfrak{d}$-invariant in the monoidal cluster categorification. The $F$-invariant also generalizes the $E$-invariant in the additive cluster categorification.

A {\em good element} is a compatibly pointed element in the sense of \cite{qin_2019} satisfying the universally positive property (see Definition \ref{def:pointed}). We view such elements as a replacement of decorated representations of quivers with potentials and finite-dimensional graded modules of quiver Hecke algebras. 

The Laurent expansion of a good element $u\in\mathcal U$ with respect to any seed $({\bf x}_w,\widetilde B_w)$ of $\mathcal U$ has a canonical expression:
\[ u={\bf x}_w^{{\bf g}_u^w}F_u^w(\hat y_{1;w},\ldots,\hat y_{n;w})
\]
where ${\bf g}_u^w\in\mathbb Z^m$ and $F_u^w\in\mathbb Z_{\geq 0}[y_1,\ldots,y_n]$ is a polynomial with constant term $1$. They are respectively called the {\em extended $g$-vector} and {\em $F$-polynomial} of $u$ with respect to seed $({\bf x}_w,\widetilde B_w)$.

For each good element $u$ in a $\mathsf{\Lambda}$-upper cluster algebra $\mathcal U$, we introduce a semifield homomorphism 
 \[\beta_{u} \colon \mathbb F_{>0} \coloneqq   \mathbb Q_{\rm sf}(x_{1;t_0},\ldots,x_{m;t_0})\rightarrow \mathbb Z^{\rm max},\]
 where $\mathbb F_{>0}$ is the universal semifield generated by the initial cluster variables of $\mathcal U$ and $\mathbb Z^{\max}=(\mathbb Z,+,\max)$ is the tropical semifield (see Proposition \ref{pro:beta-map}). Using such semifield homomorphisms, we introduce the tropical invariant and $F$-invariant.

\begin{definition}[Tropical invariant and $F$-invariant]
Let $u$ and $u'$ be two good elements in a $\mathsf{\Lambda}$-upper cluster algebra $\mathcal U$. The {\em tropical invariant} $\langle u,u'\rangle_{\rm trop}$ and {\em $F$-invariant} $(u\mid\mid u')_F$ of the pair $(u,u')$ are defined as follows:
\[\langle u,u'\rangle_{\rm trop} \coloneqq   \beta_{u'}(u)\;\;\;\text{and}\;\;\;(u\mid\mid u')_F=\beta_{u'}(u)+\beta_u(u')=\langle u,u'\rangle_{\rm trop}+\langle u',u\rangle_{\rm trop}.
\]
\end{definition}

 Given a non-zero polynomial $$F=\sum_{{\bf v}\in\mathbb N^n}c_{{\bf v}}{\bf y}^{{\bf v}}\in\mathbb Z[y_1,\ldots,y_n]$$ and a vector ${\bf r}\in \mathbb Z^n$, we denote by
$$F[{\bf r}] \coloneqq   \max\{{\bf v}^T{\bf r}\mid c_{{\bf v}}\neq 0\}\;\in\mathbb Z.$$
It is easy to see if $F$ has the constant term $1$, then $F[{\bf r}]\in\mathbb Z_{\geq 0}$ for any ${\bf r}\in\mathbb Z^n$.

The first main result in this paper is that for each $\mathsf{\Lambda}$-seed $({\bf x}_w,\widetilde B_w, \Lambda_w)$ of $\mathcal U$, we have an explicit formula to calculate the tropical invariant and $F$-invariant.  
\begin{theorem} [{Theorem \ref{thm:mutation-inv}}]
\label{intro:thm 1.2}
Let $u$ and $u'$ be two good elements in a $\mathsf{\Lambda}$-upper cluster algebra $\mathcal U$. Let
\[ u={\bf x}_w^{{\bf g}_u^w}F_u^w(\hat y_{1;w},\ldots,\hat y_{n;w}),\;\;\;u'={\bf x}_w^{{\bf g}_{u'}^w}F_{u'}^w(\hat y_{1;w},\ldots,\hat y_{n;w})
\]
be the canonical expressions of $u$ and $u'$ with respect to any vertex $w\in \mathbb T_n$. Then we have
\vspace{1mm}
\begin{itemize}
    \item [(i)] $\langle u,u'\rangle_{\rm trop}= ({\bf g}_u^w)^T\Lambda_w{\bf g}_{u'}^w+F_u^w[(S\mid {\bf 0}){\bf g}_{u'}^w]$, where $(S\mid {\bf 0})$ is as in \eqref{eqn:S-0}.
    \vspace{2mm}
    \item[(ii)] $(u\mid\mid u')_F= F_u^w[(S\mid {\bf 0}){\bf g}_{u'}^w]+F_{u'}^w[(S\mid {\bf 0}){\bf g}_{u}^w]$. In particular,  $(u\mid\mid u')_F\in\mathbb Z_{\geq 0}$. \vspace{2mm}
    \item[(iii)] $(u\mid\mid u)_F=0$ whenever $u$ is a cluster monomial.
\end{itemize}
\end{theorem}

\begin{remark}
    From the formula in (i), we see that the tropical invariant heavily depends on the Poisson coefficient matrix $\Lambda_w$. Since we define $F$-invariant by taking the symmetrized sum of tropical invariants, a prior, the $F$-invariant should also heavily depend on the Poisson coefficient matrix $\Lambda_w$. However, if we look at the formula in (ii), this formula eliminates the dependence of $(u\mid\mid u')_F$ on the Poisson coefficient matrix and it only depends on the notion of $g$-vectors and $F$-polynomials.  From this viewpoint,   {\em $F$-invariant can be defined for any skew-symmetrizable (upper) cluster algebra with trivial coefficients} if we restrict to cluster monomials (see Proposition \ref{pro:trivial-coef}). 
    The advantange to work on $\mathsf{\Lambda}$-(upper) cluster algebras is that we can define $F$-invariant for any pair of good elements and go beyond the class of cluster monomials.
\end{remark}

Since the $F$-invariant $(u\mid\mid u')_F$ can be written as 
\[(u\mid\mid u')_F= F_u^w[(S\mid {\bf 0}){\bf g}_{u'}^w]+F_{u'}^w[(S\mid {\bf 0}){\bf g}_{u}^w],\] 
we call the non-negative integer $F_u^w[(S\mid {\bf 0}){\bf g}_{u'}^w]$  the {\em partial $F$-invariant} of the pair $(u,u')$ at vertex $w\in\mathbb T_n$.

As the $F$-invariant  $(u\mid\mid u')_F$ always takes non-negative values, it is natural to ask when $(u\mid\mid u')_F=0$ holds. 

\begin{theorem}[Theorem \ref{pro:uu'}] Let $u$ and $u'$ be two cluster monomials in an upper cluster algebra. Then  $(u\mid\mid u')_F=0$ if and only if the product $u u'$ is still a cluster monomial.
\end{theorem}

 For arbitrary two good elements $u$ and $u'$ (not necessarily cluster monomials),  we give a necessary condition for $(u\mid\mid u')_F=0$ in Theorem \ref{thm:sign-coherent}. Before stating the result, we need some preparations.

Recall that  $m$ is the number of cluster variables (frozen and unfrozen) in a cluster and  $n$ is the number of unfrozen cluster variables in a cluster. By Definition \ref{def:pointed}, each good element $u$ corresponds to a tropical point $[{\bf g}]=\{{\bf g}^t\in\mathbb Z^m\mid t\in\mathbb T_n\}$ (associated to a $Y$-pattern). 
Two tropical points $[{\bf g}]=\{{\bf g}^t\in\mathbb Z^m\mid t\in\mathbb T_n\}$ and $[{\bf g}']=\{({\bf g}')^{t}\in\mathbb Z^m\mid t\in\mathbb T_n\}$ are said to be {\em sign-coherent} if for any vertex $t\in\mathbb T_n$ and any $k\in[1,n]$, we have $g_k^t\cdot (g_k^\prime)^{t}\geq 0$, where $g_k^t$ and $(g_k^\prime)^{t}$ are the $k$th components of  ${\bf g}^t$ and $({\bf g}')^{t}$. 

\begin{theorem}[Theorem \ref{thm:sign-coherent}] Let $u$ and $u'$ be two good elements in a $\mathsf{\Lambda}$-upper cluster algebra $\mathcal U$. If $(u\mid\mid u')_F=0$, then 
the tropical points correspond to $u$ and $u'$ are sign-coherent and the product $uu'$ is still a good element in $\mathcal U$.
\end{theorem}

Since we work on a $\mathsf{\Lambda}$-upper cluster algebra $\mathcal U$, it has a natural compatible Poisson structure. By definition, if two cluster variables are in the same cluster, then they are log-canonical with respect to the compatible Poisson structure. We prove that the converse statement is also true.

 \begin{theorem}[Theorem \ref{thm:log}] Let $\mathcal U$ be a $\mathsf{\Lambda}$-upper cluster algebra. Then the following statements hold.
 \begin{itemize}
     \item [(i)] Let $u\in\mathcal U$ be a good element and $x\in\mathcal U$ an unfrozen cluster variable. If $u$ and  $x$ are log-canonical, then $(x\mid\mid u)_F=0$.
     \item[(ii)] If two cluster variables are log-canonical, then they are contained in the same cluster.
 \end{itemize}
 \end{theorem}

\subsection{Results (II): Comparison with some categorical invariants}
As we have seen, the $F$-invariant shares the similar properties with  the $E$-invariant in the additive cluster categorification and the $\mathfrak{d}$-invariant in the monoidal cluster categorification. It is natural to compare them.

We first focus on the  monoidal cluster categorification \cite{HL_2010,HL_2013}.  A finite length, $\mathbf{k}$-linear, monoidal abelian category $\mathcal C$ is a {\em monoidal categorification} of a cluster algebra $\mathcal A^+$ (with non invertible frozen variables), if there exists a $\mathbb Z$-algebra isomorphism
  $$\varphi\colon\mathcal K_0(\mathcal C) \rightarrow \mathcal A^+$$ 
 from the Grothendieck ring $\mathcal K_0(\mathcal C)$ to the cluster algebra $\mathcal A^+$
  such that the cluster monomials of $\mathcal A^+$ correspond to a subset of the simple objects (up to isomorphism) in $\mathcal C$. Such simple objects are said to be {\em reachable}. We denote by $(\mathcal A^+, \mathcal C, \varphi)$ for this monoidal categorification.

In order to unify the definition of $\Lambda$-invariant and $\mathfrak{d}$-invariant in different settings and unify the proofs for our results, we introduce the notion of $\mathsf{\Lambda}$-structure on a monoidal (abelian) category, which extracts the key properties of $\Lambda$-invariant appearing in the representation theory of quiver
Hecke algebras \cite{kkko-2018} and that of quantum affine algebras \cite{kkop-2020}.

Roughly speaking, a {\em $\mathsf{\Lambda}$-structure} on a  monoidal category $\mathcal C$ is a pair $\mathsf{\Lambda}=(\Omega,\Lambda)$, where 
\begin{itemize}
    \item $\Omega$ is a class of non-zero objects in $\mathcal C$ containing all the simple objects and closed under
 isomorphism,  taking tensor products and non-zero subquotients;
 \item ${\Lambda}:\Omega\times \Omega\rightarrow\mathbb Z$ is a map satisfying certain axiomatic conditions (see Definition \ref{def:L-structure}).
\end{itemize}
If  $\mathsf{\Lambda}=(\Omega,\Lambda)$ is a $\mathsf{\Lambda}$-structure on a monoidal category $\mathcal C$, we simply say that $\Lambda\colon\Omega\times \Omega\rightarrow\mathbb Z$ is a $\mathsf{\Lambda}$-structure on  $\mathcal C$. For $M,N\in\Omega$, the numbers  $\Lambda(M,N)$ and 
     \[\mathfrak{d}(M,N) \coloneqq   \frac{\Lambda(M,N)+\Lambda(N,M)}{2}\]
     are respectively called  the {\em ${\Lambda}$-invariant} and
   {\em $\mathfrak{d}$-invariant} of the pair $(M,N)$.

\begin{definition}[$\mathsf{\Lambda}$-monoidal categorification\footnote{We borrow this teminology from \cite[Definition 6.7]{kkop-2020}.}] 
Let $(\mathcal A^+,\mathcal C,\varphi)$ be a monoidal categorification and  $\Lambda\colon\Omega\times \Omega\rightarrow\mathbb Z$ a $\mathsf{\Lambda}$-structure on $\mathcal C$. We say that  $(\mathcal A^+,\mathcal C,\varphi, \Omega, \Lambda)$ is a {\em ${\mathsf{\Lambda}}$-monoidal categorification} if the following conditions hold. 
\begin{itemize}
    \item [(c1)] Let $M, N\in\mathcal C$ be simple objects such that at least one of them is real. Then the head $M\nabla N$ of $M\otimes N$ is simple. 
    \item[(c2)] The $\mathsf{\Lambda}$-structure and cluster structure on $\mathcal C$ are compatible in the sense that there exists a monoidal cluster $\mathcal M_w=(M_{1;w},\ldots, M_{m;w})$ in $\mathcal C$ such that  $(\widetilde B_w, \Lambda_w)$ forms a compatible pair, where $\widetilde B_w$ comes from the cluster structure and $\Lambda_{w}=(\lambda_{ij;w})$ with $\lambda_{ij;w}=\Lambda(M_{i;w}, M_{j;w})$ comes from the $\mathsf{\Lambda}$-structure.
\end{itemize}
\end{definition}
The examples of $\mathsf{\Lambda}$-monoidal categorifications contain those monoidal categorifications constructed by  Kang,  Kashiwara, Kim, and  Oh  \cite{kkko-2018} 
using monoidal subcategories of representations of (symmetric) quiver Hecke algebras
    and  by Kashiwara, Kim, Oh, and  Park \cite{kkop-2024} using monoidal subcategories of representations of quantum affine algebras. 

Thanks to Proposition \ref{prop:B-L-t}, the cluster algebra $\mathcal A^+$ above, together with the family of skew-symmetric matrices $\{\Lambda_t\mid t\in\mathbb T_n\}$  forms a $\mathsf{\Lambda}$-cluster algebra. Thus the tropical invariant $\langle u,u'\rangle_{\rm trop}$ is defined for any pair of good elements in $\mathcal A^+$.

\begin{theorem}[Theorem \ref{thm:trop-Lambda}] \label{thm:introd-trop-Lambda}
Let  $(\mathcal A^+,\mathcal C,\varphi, \Omega, \Lambda)$ be a $\mathsf{\Lambda}$-monoidal categorification. Let $M,N$ be two simple objects in $\mathcal C$ such that $\varphi(M)$ and $\varphi(N)$ are good elements in $\mathcal A^+$. If $M$ or $N$ is reachable,
then we have
    \[\Lambda(M,N)=\langle \varphi(M), \varphi(N)\rangle_{\rm trop}\;\;\;\text{and}\;\;\;2\mathfrak{d}(M,N) = (\varphi(M)\mid\mid \varphi(N))_F.\] 
 \end{theorem}
Since the reachable simple objects correspond to cluster monomials, which are always good elements in $\mathcal A^+$, we have the following consequence.
\begin{corollary}
    Let  $(\mathcal A^+,\mathcal C,\varphi, \Omega, \Lambda)$ be a $\mathsf{\Lambda}$-monoidal categorification. If $M$ and $N$ are reachable simple objects, then we have 
    \[\Lambda(M,N)=\langle \varphi(M), \varphi(N)\rangle_{\rm trop}\;\;\;\text{and}\;\;\;2\mathfrak{d}(M,N) = (\varphi(M)\mid\mid \varphi(N))_F.\] 
\end{corollary}

As mentioned in \cite[Pages 1040--1041]{kkop-2020}, the integer-valued invariants $\Lambda(M,N)$ and  $\mathfrak{d}(M,N)$ provide important information in the representation theory of quiver Hecke algebra. However, in general, computing those values is quite difficult. Thanks to  Theorem \ref{thm:introd-trop-Lambda} and Theorem \ref{intro:thm 1.2}, we obtain explicit formulas expressing the $\Lambda$-invariant and $\mathfrak{d}$-invariant via $g$-vectors and $F$-polynomials, which can be easily implemented by a computer
program.

Now, let us turn to the additive cluster categorification \cite{DWZ08, DWZ10}.
\begin{theorem}[Theorem \ref{thm:F=E}]
Let $(Q,W)$ be a quiver with non-degenerate potential and $\mathcal A_Q$ the corresponding cluster algebra with trivial coefficients. 
Let $u, u'$ be two cluster monomials of $\mathcal A_Q$ and let $\mathcal M_u, \mathcal M_{u'}$ be the negative-reachable decorated representations of $(Q,W)$ corresponding to $u$ and $u'$. Then $$(u\mid\mid u')_F=E^{\rm sym}(\mathcal M_u,\mathcal M_{u'}).$$
\end{theorem}
Notice that in the above theorem we take $S=I_n$ to be the fixed skew-symmetrizer for the exchange matrices of $\mathcal A_Q$, when we consider the $F$-invariant for cluster monomials in $\mathcal A_Q$.
\subsection{Results (III): Dominant sets and oriented exchange graphs}  Two seeds of a cluster algebra are {\em equivalent}, if they are the same up to relabeling. The {\em exchange graph} $\mathcal H=\mathcal H(\mathcal A)$ of a cluster algebra $\mathcal A$ is the graph whose vertices correspond to the seeds (up to seed equivalence) of $\mathcal A$ and whose edges correspond to seed mutations.
Thanks to the sign-coherence of $C$-matrices \cite{GHKK18}, each mutation has two states: green mutation or red mutation. Fix an initial seed at vertex $t_0\in\mathbb T_n$,  
 the {\em oriented exchange graph} $\overrightarrow{\mathcal H}^{t_0}$ of $\mathcal A$ is the quiver whose underlying graph is the exchange graph $\mathcal H$ of $\mathcal A$ and whose orientation is induced by the green mutations.

$\tau$-tilting theory is introduced by Adachi, Iyama and Reiten \cite{air_2014}, which completes (classical) tilting theory from the viewpoint of mutation. Various of the fundamental concepts in cluster algebras, such as clusters/seeds, mutations, $C$-matrices, $G$-matrices, (oriented) exchange graphs,
etc., were generalized to $\tau$-tilting theory.

The seeds in cluster algebras correspond to (basic) $\tau$-tilting pairs in $\tau$-tilting theory, which are in bijection with the functorially finite torsion classes in the module categories.  The inclusion of subcategories induces a natural partial order on the set of functorially finite torsion classes and thus a partial order on the set of $\tau$-tilting pairs (up to isomorphism).

Thanks to the existence of the partial order,  the oriented exchange graphs in  $\tau$-tilting theory are acyclic. Actually,  Adachi, Iyama and Reiten \cite[Theorem 0.6]{air_2014}  proved that the oriented exchange graphs in $\tau$-tilting theory coincide with the Hasse quivers of the posets of functorially finite torsion classes, equivalently, the posets of $\tau$-tilting pairs.

It is natural to ask whether the oriented exchange graphs of (skew-symmetrizable) cluster algebras are acyclic and whether these oriented exchange graphs coincide with
Hasse quivers of some posets.

Inspired by $F$-invariant, we introduce the dominant sets for seeds of cluster algebras as a replacement of torsion classes for $\tau$-tilting pairs in  $\tau$-tilting theory. With the help of the dominant sets, we prove the following results on oriented exchange graphs.

\begin{theorem}[Theorems \ref{thm:graph}, \ref{thm:hasse}, Corollary \ref{cor:hasse}]
The following statements hold.
\begin{itemize}
    \item [(i)] The oriented exchange graphs of cluster algebras are acyclic. In particular, green mutations induce a partial order on the set of seeds (up to seed equivalence) of cluster algebras.
    \item[(ii)]  The oriented exchange graphs of cluster algebras coincide with the Hasse quivers of the  posets of seeds of cluster algebras.
\end{itemize}
\end{theorem}

We would like to point out that by ignoring initial cluster variables, the dominant sets in cluster algebras correspond to the sets of indecomposable $\tau$-rigid modules contained in the functorially finite torsion classes (see Proposition \ref{pro:dom-tau}). In this sense, we view the dominant sets as a replacement of torsion classes in $\tau$-titling theory.

\subsection{Conclusion and future work} 
In this paper, we introduce the tropical invariant and $F$-invariant for ($\mathsf{\Lambda}$)-cluster algebras with a focus on $F$-invariant. Up to a constant rescaling factor, we prove that the following three invariants coincide on cluster monomials: (i) the $\mathfrak{d}$-invariant in  $\mathsf{\Lambda}$-monoidal cluster categorification; (ii) the $F$-invariant in  cluster algebras; (iii) the $E$-invariant in  additive cluster categorification. Notice that these three invariants are symmetric, e.g., $(u\mid\mid u')_F=(u'\mid\mid u)_F$.
We prove the oriented exchange graphs of cluster algebras are acyclic by introducing the dominant sets for seeds of cluster algebras as a replacement of torsion classes for $\tau$-tilting pairs in $\tau$-tilting theory.

Now let us consider those $\mathsf{\Lambda}$-monoidal categorifications $(\mathcal A^+,\mathcal C,\varphi, \Omega, \Lambda)$ constructed by Kashiwara, Kim, Oh and Park \cite{kkop-2024} using the representations of quantum affine algebras. For a pair $(M,N)$ of simple objects in $\mathcal C$, it is known \cite[Proposition 3.16]{kkop-2020} that the $\mathfrak{d}$-invariant can be written as 
\[\mathfrak{d}(M,N)  = \mathfrak{o}(M,N)+\mathfrak{o}(N,M),\]
where $\mathfrak{o}(M,N)\in\mathbb Z_{\geq 0}$ is the pole order of the normalized $R$-matrix $R^{\mathrm{norm}}_{M,N_z}$ at $z=1$. We may call $\mathfrak{o}(M,N)$ the {\em partial $\mathfrak{d}$-invariant} of the  pair $(M,N)$.
It is an important quantity in itself, particularly as a key ingredient in the construction of the generalized quantum affine Schur--Weyl duality functors due to \cite{KKK-2018R}.

In an ongoing work\footnote{This work will be an extended version of the note \cite{cao_2025}.} with Ryo Fujita and Kota Murakami,  we show that the following three (non-symmetric) partial invariants coincide on cluster monomials with a good choice of the reference seed: (i) the partial $\mathfrak{d}$-invariant in  $\mathsf{\Lambda}$-monoidal cluster categorification using representations of quantum affine algebras; (ii) the partial $F$-invariant in cluster algebras; (iii) the partial $E$-invariant in additive cluster categorification.  Theorem \ref{thm:introd-trop-Lambda} will play an important role in the proof of the coincidence of the partial $F$-invariant and partial $\mathfrak{d}$-invariant. Such results will be used to  prove the conjectural unified denominator formula \cite[Conjecture 6.7]{Fujita-Oh-2021} (equivalently, \cite[Conjecture 5.17]{Fujita-Murakami-2023}) for the normalized $R$-matrices between the KR-modules.

\subsection{Organization} In Section \ref{sec2}, we recall some well-known concepts in cluster algebras, including $Y$-patterns, cluster patterns, compatible pairs, and $\mathsf{\Lambda}$-(upper) cluster algebras.

In Section \ref{sec3}, we recall the definition of tropical points for $Y$-patterns and cluster patterns, and compatibly pointed elements in upper cluster algebras. We give the definition of good elements for upper cluster algebras of full rank. 

In Section \ref{sec4}, for each good element in a $\mathsf{\Lambda}$-upper cluster algebra, we introduce  a semifield homomorphism (see Proposition \ref{pro:beta-map}) and we define the tropical invariant and $F$-invariant using such semifield homomorphisms.  The main results in this section are Theorem \ref{thm:mutation-inv}, Theorem \ref{pro:uu'} , Theorem \ref{thm:sign-coherent}, Theorem \ref{thm:log}. Notice that $F$-invariant can be defined for any skew-symmetrizable (upper) cluster algebra with trivial coefficients, when we only consider cluster monomials (see Proposition \ref{pro:trivial-coef}).

In Section \ref{sec:monoidal} and Section \ref{sec:E-invariant}, we compare the tropical invariant and $F$-invariant with some known invariants in additive/monoidal cluster categorification. 
 When restricting to cluster monomials, we prove that the tropical invariant and $F$-invariant respectively coincide with the $\Lambda$-invariant and twice  $\mathfrak{d}$-invariant in $\mathsf{\Lambda}$-monoidal categorification (see Theorem \ref{thm:trop-Lambda}) and we prove that the $F$-invariant coincides with the $E$-invariant in additive cluster categorification (see Theorem \ref{thm:F=E}).

In Section \ref{sec-dom-sets}, we introduce the dominant sets for the seeds of cluster algebras as a replacement of torsion classes for $\tau$-tilting pairs in $\tau$-tilting theory. Then we prove the results on oriented exchange graphs (see Theorem \ref{thm:graph} and Theorem \ref{thm:hasse}).

\section{Preliminaries on cluster algebras}\label{sec2}

Throughout this paper, we fix a pair $(n,m)$ of integers with $m\geq n > 0$. For any positive integer $r$, we denote $[1,r] \coloneqq   \{1,\ldots,r\}$.
The matrices in this paper are always integer matrices.
\subsection{Mutation matrices and their mutations}

Recall that an $n\times n$ integer matrix $B$ is said to be {\em skew-symmetrizable}, if there exists a diagonal integer matrix $S=diag(s_1,\ldots,s_n)$ with $s_i>0$ ($i\in[1,n]$) such that $SB$ is skew-symmetric. Such a diagonal matrix $S$ is called a {\em skew-symmetrizer} of $B$.

An $m\times n$ integer matrix $\widetilde B=\begin{bmatrix}
       B\\ P
   \end{bmatrix}=(b_{ij})$ is called a {\em mutation matrix}, if its top $n\times n$ submatrix $B$ is skew-symmetrizable. The submatrix $B$ is called the {\em principal part} of $\widetilde B$.

Let $\widetilde B=\begin{bmatrix}
       B\\ P
   \end{bmatrix}=(b_{ij})$ be an $m\times n$ mutation matrix. The {\em mutation} of $\widetilde B$
   in direction $k\in[1,n]$ is
defined to be the new integer matrix $\mu_k(\widetilde B)=\widetilde B'=\begin{bmatrix}
       B'\\ P'
   \end{bmatrix}=(b_{ij}')$ given by
\begin{equation}\label{eqn:b-mutation}
b_{ij}^\prime=\begin{cases}-b_{ij}, & \text{if}\;i=k\;\text{or}\;j=k;\\
 b_{ij}+[b_{ik}]_+[b_{kj}]_+-[-b_{ik}]_+[-b_{kj}]_+,&\text{otherwise},\end{cases}
\end{equation}
where $[a]_+ \coloneqq   \max\{a,0\}$ for any $a\in\mathbb R$.

The following statements are well-known,  \confer\cite[Proposition 4.5]{fz_2002}, \cite[Lemma 3.2]{bfz_2005}.
\begin{itemize}
    \item $\mu_k(\widetilde B)=\widetilde B'$ is still a mutation matrix, namely, the submatrix $B'$ of $\widetilde B'$ is skew-symmetrizable;
    \item $B$ and $B'$ share the same skew-symmetrizer;
    \item $\mu_k$ is an involution,  \ie $\mu_k^2=id$;
    \item The rank of $ \widetilde B'=\mu_k(\widetilde B)$ is equal to the rank of $\widetilde B$.
\end{itemize}

As shown in \cite[(3.2)]{bfz_2005}, the  matrix $\widetilde B'=\mu_k(\widetilde B)$ can be written as
\begin{eqnarray}
    \label{eqn:ebf}
    \widetilde B'=\mu_k(\widetilde B)=E_{k,\varepsilon}^{\widetilde B}\widetilde BF_{k,\varepsilon}^{\widetilde B}, 
\end{eqnarray}
where $\varepsilon\in\{\pm \}$ and
\begin{itemize}
    \item $E_{k,\varepsilon}^{\widetilde B}$ is the $m\times m$ matrix obtained by replacing the $k$th column of $I_m$ by the column vector $(e_1,\ldots,e_m)^T$, where
    \[e_i=\begin{cases}
        -1,&\text{if }i=k;\\
        [-\varepsilon b_{ik}]_+,&\text{if }i\neq k.
    \end{cases}\]
    \item $F_{k,\varepsilon}^{\widetilde B}$ is the $n\times n$ matrix obtained by replacing the $k$th row of $I_n$ by the row vector $(f_1,\ldots,f_n)$, where
    \[f_i=\begin{cases}
        -1,&\text{if }i=k;\\
        [\varepsilon b_{ki}]_+,&\text{if }i\neq k.
    \end{cases}\]
\end{itemize}
 It turns out that $\widetilde B'$ is independent of the choice of a sign $\varepsilon\in\{\pm\}$ in \eqref{eqn:ebf}. The matrices $E_{k,\varepsilon}^{\widetilde B}$ and $F_{k,\varepsilon}^{\widetilde B}$ defined above coincide with the ones used in \cite[(3.2), (3.3)]{bz-2005}. It is easy to check
\begin{eqnarray}\label{eqn:ef2}
    (E_{k,\varepsilon}^{\widetilde B})^2=I_m\;\;\;\text{and}\;\;\; (F_{k,\varepsilon}^{\widetilde B})^2=I_n.
\end{eqnarray}

\subsection{Compatible pairs and their mutations}

 Let $\Lambda=(\lambda_{ij})$ be an $m\times m$ skew-symmetric integer matrix, and let
$\widetilde B=\begin{bmatrix}
       B\\ P
   \end{bmatrix}=(b_{ij})$ be
 an $m\times n$ integer matrix, where $B$ is the top $n\times n$ submatrix of $\widetilde B$.  Here we do not assume that $B$ is skew-symmetrizable.

\begin{definition}[Compatible pair, \cite{bz-2005}]
   Keep $\Lambda$ and $\widetilde B$ as above. Let $S=diag(s_1,\ldots,s_n)$ be a diagonal integer matrix with $s_j>0,\;j\in[1,n]$. The pair $(\widetilde B,\Lambda)$ is called a {\em compatible pair} of type $S$, if for any $j\in[1,n]$ and $i\in[1,m]$, we have
   \begin{eqnarray}\label{eqn:lpair}
    \sum_{k=1}^mb_{kj}\lambda_{ki}=\delta_{i,j}s_j.
   \end{eqnarray}
 In other words,  $\widetilde B^T\Lambda=(S\mid {\bf 0})$, where  ${\bf 0}$ is the $n\times (m-n)$ zero matrix.
\end{definition}

Notice that the diagonal matrix $S$ is uniquely determined by the compatible pair $(\widetilde B,\Lambda)$. The matrix $\Lambda$ in a compatible pair $(\widetilde B,\Lambda)$ is called a {\em Poisson coefficient matrix}.

\begin{proposition}[{\cite[Proposition 3.3]{bz-2005}}]
\label{pro:fullrank}
Let $(\widetilde B,\Lambda)$ be a compatible pair of type $S$ and $B$ the top $n\times n$ submatrix of $\widetilde B$. Then the matrix $\widetilde B$ has the full rank $n$ and $SB=\widetilde B^T\Lambda \widetilde B$. In particular,  $B$  is skew-symmetrizable.
\end{proposition}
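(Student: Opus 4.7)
The plan is to derive all three conclusions directly from the defining identity $\widetilde B^T\Lambda=(S\mid\mathbf{0})$ using only elementary matrix algebra; none of the steps requires any cluster-theoretic input.

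First, I would establish the identity $SB=\widetilde B^T\Lambda\widetilde B$. Right-multiplying the compatibility relation $\widetilde B^T\Lambda=(S\mid\mathbf{0})$ by $\widetilde B=\begin{bmatrix}B\\ P\end{bmatrix}$ gives
\[
\widetilde B^T\Lambda\widetilde B=(S\mid\mathbf{0})\begin{bmatrix}B\\ P\end{bmatrix}=SB+\mathbf{0}\cdot P=SB,
\]
which is the claimed equality.

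Next, I would argue the full-rank statement. Since $S=\mathrm{diag}(s_1,\dots,s_n)$ has nonzero diagonal entries, the $n\times m$ matrix $(S\mid\mathbf{0})$ has rank $n$. Because $\widetilde B^T\Lambda=(S\mid\mathbf{0})$, the rank of $\widetilde B^T$ (and hence of $\widetilde B$) is at least $n$; but $\widetilde B$ is $m\times n$, so its rank is at most $n$. Therefore $\mathrm{rank}(\widetilde B)=n$.

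Finally, I would verify that $B$ is skew-symmetrizable with skew-symmetrizer $S$. Transposing the identity $SB=\widetilde B^T\Lambda\widetilde B$ and using that $S^T=S$ (since $S$ is diagonal) and $\Lambda^T=-\Lambda$ (since $\Lambda$ is skew-symmetric), I obtain
\[
B^TS=(SB)^T=(\widetilde B^T\Lambda\widetilde B)^T=\widetilde B^T\Lambda^T\widetilde B=-\widetilde B^T\Lambda\widetilde B=-SB,
\]
so $SB+(SB)^T=SB+B^TS=0$, i.e., $SB$ is skew-symmetric. Since $S$ is a positive diagonal matrix, this shows that $B$ is skew-symmetrizable and $S$ is a skew-symmetrizer.

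There is no real obstacle here: the three statements follow in one short chain from the definition of a compatible pair once one notices that multiplying $\widetilde B^T\Lambda=(S\mid\mathbf{0})$ on the right by $\widetilde B$ absorbs the lower block $P$ into the zero block, collapsing everything to $SB$. The only subtlety worth flagging explicitly in the write-up is the transposition step, where one must keep track of the signs coming from $\Lambda^T=-\Lambda$ to conclude that $SB$ (rather than $-SB$) is skew-symmetric.
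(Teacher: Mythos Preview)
Your argument is correct. The paper does not supply its own proof of this proposition; it simply cites \cite[Proposition 3.3]{bz-2005}, and the short matrix computation you give is essentially the standard proof found there.
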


Conversely, by \cite[Theorem 1.4]{gsv-2003}, whenever $\widetilde B$ is a mutation matrix of full rank, we can find a skew-symmetric integer matrix $\Lambda$ such that $(\widetilde B,\Lambda)$ is a compatible pair.

Let $(\widetilde B,\Lambda)$ be a compatible pair of type $S$ and $k\in [1,n]$. We set $\widetilde B' \coloneqq   \mu_k(\widetilde B)=E_{k,\varepsilon}^{\widetilde B}\widetilde BF_{k,\varepsilon}^{\widetilde B}$ and
\[\Lambda' \coloneqq   (E_{k,\varepsilon}^{\widetilde B})^T\Lambda E_{k,\varepsilon}^{\widetilde B}=(\lambda_{ij}').\]
We have
\begin{eqnarray}\label{eqn:L-mutation}
    \lambda_{ij}'=\begin{cases}-\lambda_{ik}+\sum_{l=1}^m[-\varepsilon b_{lk}]_+\lambda_{il},&\text{if }j=k;\\
-\lambda_{kj}+\sum_{l=1}^m[-\varepsilon b_{lk}]_+\lambda_{lj},&\text{if }i=k;\\
\lambda_{ij},&\text{otherwise}.
\end{cases}
\end{eqnarray}

\begin{proposition}[{\cite[Proposition 3.4]{bz-2005}}] \label{pro:pm}
\hfill
\begin{itemize}
\item[(i)] The matrix  $\Lambda'$ is independent of the choice of  $\varepsilon\in\{\pm\}$.
\item[(ii)] The pair $(\widetilde B',\Lambda')$ is still a compatible pair of type $S$.
\end{itemize}
\end{proposition}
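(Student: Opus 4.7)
The plan is to prove (i) and (ii) separately by exploiting two crucial identities: the compatibility relation $\widetilde B^T \Lambda = (S \mid {\bf 0})$ in the form of its $k$-th row, and the involution property $(E_{k,\varepsilon}^{\widetilde B})^2 = I_m$ from \eqref{eqn:ef2}.

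For (i), I would first observe that the elementary identity $[b_{ik}]_+ - [-b_{ik}]_+ = b_{ik}$, together with $b_{kk} = 0$, immediately gives
\[
E_{k,-}^{\widetilde B} = E_{k,+}^{\widetilde B} + \widetilde b_k\, e_k^T,
\]
where $\widetilde b_k$ denotes the $k$-th column of $\widetilde B$ (an $m$-vector whose $k$-th entry vanishes). Expanding $(E_{k,-}^{\widetilde B})^T \Lambda\, E_{k,-}^{\widetilde B}$ produces $(E_{k,+}^{\widetilde B})^T \Lambda\, E_{k,+}^{\widetilde B}$ plus two cross terms plus one quadratic term; the quadratic term is proportional to $\widetilde b_k^T \Lambda\, \widetilde b_k$, which vanishes by skew-symmetry of $\Lambda$. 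For the cross terms, the $k$-th row of the compatibility identity reads $\widetilde b_k^T \Lambda = s_k\, e_k^T$, and since the $k$-th row of $E_{k,+}^{\widetilde B}$ is $-e_k^T$, the two cross terms together collapse to $-s_k\, e_k e_k^T + s_k\, e_k e_k^T = 0$.

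For (ii), I would compute
\[
(\widetilde B')^T \Lambda' = (F_{k,\varepsilon}^{\widetilde B})^T \widetilde B^T (E_{k,\varepsilon}^{\widetilde B})^T (E_{k,\varepsilon}^{\widetilde B})^T \Lambda\, E_{k,\varepsilon}^{\widetilde B},
\]
and note that transposing $(E_{k,\varepsilon}^{\widetilde B})^2 = I_m$ yields $(E_{k,\varepsilon}^{\widetilde B})^T (E_{k,\varepsilon}^{\widetilde B})^T = I_m$; this collapses the middle factors. Applying compatibility reduces the target to the entrywise identity $(F_{k,\varepsilon}^{\widetilde B})^T (S \mid {\bf 0})\, E_{k,\varepsilon}^{\widetilde B} = (S \mid {\bf 0})$. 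The columns indexed by $j \neq k$ are trivial because $E_{k,\varepsilon}^{\widetilde B}$ agrees with $I_m$ off its $k$-th column and $F_{k,\varepsilon}^{\widetilde B}$ with $I_n$ off its $k$-th row. For the $k$-th column, the verification reduces to checking
\[
s_i [-\varepsilon b_{ik}]_+ = s_k [\varepsilon b_{ki}]_+ \quad \text{for } i \in [1,n] \setminus \{k\},
\]
which follows from the skew-symmetrizability relation $s_i b_{ik} = -s_k b_{ki}$ (guaranteed by Proposition \ref{pro:fullrank}) by a case split on the sign of $b_{ik}$.

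The chief conceptual obstacle is spotting the correct repackaging of compatibility: namely, that the row identity $\widetilde b_k^T \Lambda = s_k\, e_k^T$ is precisely what annihilates the cross terms in (i), and that the transpose of $(E_{k,\varepsilon}^{\widetilde B})^2 = I_m$ gives $(E_{k,\varepsilon}^{\widetilde B})^T(E_{k,\varepsilon}^{\widetilde B})^T = I_m$ (even though $E_{k,\varepsilon}^{\widetilde B}$ itself is not symmetric) is what produces the simplification in (ii). Once these are in hand, both parts reduce to routine matrix bookkeeping, with skew-symmetrizability of $B$ needed explicitly only in the $k$-th column computation of (ii).
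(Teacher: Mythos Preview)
Your argument is correct in both parts; the identity $E_{k,-}^{\widetilde B}=E_{k,+}^{\widetilde B}+\widetilde b_k e_k^T$ and the row form $\widetilde b_k^T\Lambda=s_k e_k^T$ of compatibility indeed kill the cross terms in (i), and the collapse via $(E_{k,\varepsilon}^{\widetilde B})^T(E_{k,\varepsilon}^{\widetilde B})^T=I_m$ together with the skew-symmetrizability relation $s_i b_{ik}=-s_k b_{ki}$ handles (ii) exactly as you describe.

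There is nothing to compare against here: the paper does not supply its own proof of this proposition but simply quotes it as \cite[Proposition 3.4]{bz-2005}. Your write-up is essentially the standard verification from that source, so it is entirely appropriate as a self-contained proof.
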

Keep the above setting. We call $(\widetilde B',\Lambda')$ the {\em mutation of the compatible pair}  $(\widetilde B,\Lambda)$ in direction $k$ and denote $(\widetilde B',\Lambda')=\mu_k(\widetilde B,\Lambda)$. It is known from \cite[Proposition 3.6]{bz-2005} that  $\mu_k(\widetilde B',\Lambda')=(\widetilde B,\Lambda)$,  \ie $\mu_k$ is an involution.

The following proposition  is very useful to construct many examples of compatible pairs from a given skew-symmetrizable matrix.

\begin{proposition}
[{\cite[Example 0.5]{zelevinsky-2005}}]
\label{pro:ze-2005}
Let $B$ be an $n\times n$ skew-symmetrizable integer matrix and set $\widetilde B=\begin{bmatrix}
       B\\ I_n
   \end{bmatrix}$. Then the skew-symmetric
matrices $\Lambda$ compatible with $\widetilde B$ in the sense of \eqref{eqn:lpair} are those of the form
\[\Lambda=\begin{bmatrix}
    \Lambda_0&-S-\Lambda_0B\\
    S-B^T\Lambda_0&-SB+B^T\Lambda_0B
\end{bmatrix},
\]
where $\Lambda_0$ is an arbitrary skew-symmetric integer $n\times n$ matrix and $S$ is an arbitrary skew-symmetrizer of $B$.
\end{proposition}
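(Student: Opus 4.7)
The plan is to decompose a candidate skew-symmetric matrix $\Lambda$ into $n\times n$ blocks and to solve the compatibility equation $\widetilde B^T\Lambda = (S\mid \mathbf{0})$ block by block. Concretely, I would write
\[
\Lambda=\begin{bmatrix}\Lambda_0 & \Lambda_1 \\ -\Lambda_1^T & \Lambda_2\end{bmatrix},
\]
where $\Lambda_0,\Lambda_2$ are skew-symmetric $n\times n$ matrices and $\Lambda_1$ is an arbitrary $n\times n$ matrix (this is the general form of a $2n\times 2n$ skew-symmetric matrix). Since $\widetilde B^T=(B^T\mid I_n)$, direct multiplication yields
\[
\widetilde B^T\Lambda=\bigl(B^T\Lambda_0-\Lambda_1^T\ \big|\ B^T\Lambda_1+\Lambda_2\bigr).
\]

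Next I would read off the compatibility constraints. The left block gives $B^T\Lambda_0-\Lambda_1^T=S$, and taking transposes (using that $\Lambda_0^T=-\Lambda_0$ and $S^T=S$) forces $\Lambda_1=-S-\Lambda_0 B$. Substituting this into the right block gives $\Lambda_2=-B^T\Lambda_1=B^TS+B^T\Lambda_0 B$. At this point I would invoke the hypothesis that $S$ is a skew-symmetrizer of $B$: since $SB$ is skew-symmetric, $B^TS=(SB)^T=-SB$, so $\Lambda_2=-SB+B^T\Lambda_0 B$. This matches the stated formula exactly, and the formulas for $-\Lambda_1^T=S-B^T\Lambda_0$ also match.

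To close the argument I would verify that, conversely, every $\Lambda$ of the stated form is actually skew-symmetric and satisfies the compatibility condition. Skew-symmetry of $\Lambda$ reduces to skew-symmetry of $\Lambda_2$, which holds because $SB$ is skew-symmetric by the definition of skew-symmetrizer and $(B^T\Lambda_0 B)^T=B^T\Lambda_0^T B=-B^T\Lambda_0 B$ since $\Lambda_0$ is skew-symmetric. The compatibility identity $\widetilde B^T\Lambda=(S\mid \mathbf{0})$ is then a routine substitution, again using $B^TS=-SB$ to recognize that the right-hand block becomes $B^T(-S-\Lambda_0 B)+(-SB+B^T\Lambda_0 B)=-B^TS-SB=0$.

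The computation is essentially mechanical, so there is no real obstacle; the only subtle point to keep flagged is the role of the skew-symmetrizer condition $SB=-B^TS$, which is what allows the two natural-looking expressions $B^TS+B^T\Lambda_0 B$ and $-SB+B^T\Lambda_0 B$ for $\Lambda_2$ to agree and, simultaneously, what guarantees that $\Lambda_2$ is skew-symmetric. Finally, because $S$ and $B$ have integer entries and $\Lambda_0$ is a skew-symmetric integer matrix, all blocks of $\Lambda$ have integer entries, so the parametrization indeed ranges over skew-symmetric integer matrices as claimed.
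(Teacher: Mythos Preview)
Your argument is correct. The paper does not actually supply a proof of this proposition; it simply cites \cite[Example 0.5]{zelevinsky-2005} and moves on, noting that the result ``will be not used in this paper'' but is useful for constructing compatible pairs. Your block-decomposition computation is exactly the intended straightforward verification.

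One small presentational point: in the forward direction you phrase the skew-symmetrizer condition on $S$ as a ``hypothesis'' you invoke, but in fact it is forced. Once you have $\Lambda_2=B^TS+B^T\Lambda_0 B$ with $\Lambda_2$ and $B^T\Lambda_0 B$ both skew-symmetric, it follows that $B^TS$ is skew-symmetric, hence $SB=(B^TS)^T=-B^TS$ and $SB$ is skew-symmetric. So the diagonal matrix $S$ arising in the compatibility equation is automatically a skew-symmetrizer of $B$, which is what makes the parametrization by skew-symmetrizers exhaustive. This is consistent with Proposition~\ref{pro:fullrank} in the paper, which records the same consequence in general.
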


\subsection{$Y$-pattern and cluster pattern}
Recall that we fixed a pair $(n,m)$ of integers with $m\geq n >0$. Let $\mathbb F$ be the field of rational functions over $\mathbb Q$ in $m$ variables.

A {\em $Y$-seed} of rank $n$ in $\mathbb F$ is a pair $({\bf y}, \widehat B)$, where
\begin{itemize}
	\item ${\bf y} = (y_1, \ldots, y_{m})$ satisfies that $\{y_1, \ldots, y_{m}\}$ is a free generating set of $\mathbb F$ over $\mathbb Q$;
	\item  $\widehat B=(B\mid Q)=(\hat b_{ij})$ is an $n\times m$ integer matrix such that its leftmost $n\times n$ submatrix $B$ is skew-symmetrizable.
\end{itemize}
The variables $y_1,\ldots,y_m$ are called the {\em $y$-variables} of $({\bf y}, \widehat B)$.

Let  $({\bf y}, \widehat B)$ be a $Y$-seed of rank $n$ in $\mathbb F$. The {\em mutation} of  $({\bf y}, \widehat B)$ in direction $k\in[1,n]$ is the pair  $({\bf y}', \widehat B') \coloneqq   \mu_k({\bf y}, \widehat B)$ given as follows:

\begin{align}\label{eqn:y-mutation}
 y_i^\prime&=\begin{cases}y_k^{-1}, &\text{if}\;i=k; \\
y_iy_k^{[\hat b_{ki}]_+}(1+y_k)^{-\hat b_{ki}},&\text{otherwise}.\end{cases}\\
\hat b_{ij}^\prime&=\begin{cases}-\hat b_{ij}, & \text{if}\;i=k\;\text{or}\;j=k;\\
\hat b_{ij}+[\hat b_{ik}]_+[\hat b_{kj}]_+-[-\hat b_{ik}]_+[-\hat b_{kj}]_+,&\text{otherwise}.\end{cases}\nonumber
\end{align}
One can check that the new pair  $({\bf y}', \widehat B')=\mu_k({\bf y}, \widehat B)$ is still a $Y$-seed of rank $n$ and $({\bf y}, \widehat B)=\mu_k({\bf y}', \widehat B')$.

A {\em cluster seed} or simply a {\em seed} of rank $n$ in $\mathbb F$ is a pair
$({\bf x}, \widetilde B)$, where
\begin{itemize}
	\item ${\bf x} = (x_1, \ldots, x_{m})$ satisfies that $\{x_1, \ldots, x_{m}\}$ is a free generating set of $\mathbb F$ over $\mathbb Q$;
	\item  $\widetilde B=\begin{bmatrix}
       B\\ P
   \end{bmatrix}=(b_{ij})$ is an $m\times n$ mutation matrix.
\end{itemize}
In this case, the tuple ${\bf x}$ is called the {\it cluster} of $({\bf x}, \widetilde B)$. Elements in ${\bf x}$ are called
{\it cluster variables}. More precisely, we call $x_1,\ldots,x_n$ {\em unfrozen cluster variables} and $x_{n+1},\ldots,x_{m}$ {\em frozen (cluster) variables}.
The matrices $B$ and $P$ are respectively called the {\it  exchange matrix}, {\it coefficient matrix} of $({\bf x}, \widetilde B)$. We denote by $\widehat {\bf y} \coloneqq   (\hat y_1,\ldots,\hat y_n)$, where $\hat y_k \coloneqq   \prod_{j=1}^mx_j^{b_{jk}}$. The variables $\hat y_1,\ldots,\hat y_n$ are called {\em $\hat y$-variables} associated to $({\bf x}, \widetilde B)$.

Let  $({\bf x}, \widetilde B)$ be a seed of rank $n$ in $\mathbb F$. The {\em mutation} of $({\bf x}, \widetilde B)$ in direction $k\in[1,n]$ is the pair  $({\bf x}', \widetilde B')=\mu_k({\bf x}, \widetilde B)$ given by $\widetilde B'=\mu_k(\widetilde B)$ and
\begin{eqnarray}
\label{eqn:x-mutation}
 x_i^\prime=\begin{cases}x_i,&
 \text{if}\;i\neq k;\\
 x_k^{-1}\cdot (\prod_{j=1}^mx_j^{[b_{jk}]_+}+\prod_{j=1}^mx_j^{[-b_{jk}]_+}),&\text{if}\;i= k.\end{cases}
\end{eqnarray}
One can check that the new pair  $({\bf x}', \widetilde B')=\mu_k({\bf x}, \widetilde B)$ is still a seed of rank $n$ and $({\bf x}, \widetilde B)=\mu_k({\bf x}', \widetilde B')$.

Let $\mathbb T_n$ denote the $n$-regular tree. We
 label the edges of $\mathbb T_n$ by $1,\ldots, n$ such that the $n$ different edges adjacent to the same vertex of $\mathbb T_n$ receive different labels.

\begin{definition}
(i) A {\em $Y$-pattern} $\mathcal S_Y=\{({\bf y}_t, \widehat B_t)\mid t\in \mathbb T_n\}$ of rank $n$
	is an assignment of a $Y$-seed $({\bf y}_t, \widehat B_t)$ of rank $n$ to
 	every vertex $t$ of $\mathbb T_n$ such that $({\bf y}_{t'}, \widehat B_{t'})=\mu_k({\bf y}_t, \widehat B_t)$ whenever
	\begin{xy}(0,1)*+{t}="A",(10,1)*+{t'}="B",\ar@{-}^k"A";"B" \end{xy} in $\TT_n$.

 (ii) A {\em cluster pattern} $\mathcal S_X=\{({\bf x}_t, \widetilde B_t)\mid t\in \mathbb T_n\}$ of rank $n$
	is an assignment of a cluster seed $({\bf x}_t, \widetilde B_t)$ of rank $n$ to
 	every vertex $t$ of $\mathbb T_n$ such that $({\bf x}_{t'}, \widetilde B_{t'})=\mu_k({\bf x}_t, \widetilde B_t)$ whenever
	\begin{xy}(0,1)*+{t}="A",(10,1)*+{t'}="B",\ar@{-}^k"A";"B" \end{xy} in $\TT_n$.
\end{definition}

We usually write ${\bf y}_t=(y_{1;t},\ldots,y_{m;t})$, $\widehat B_t=(B_t\mid Q_t)=(\hat b_{ij;t})$, ${\bf x}_t=(x_{1;t},\ldots,x_{m;t})$ and \[\widetilde B_t = \begin{pmatrix}
	    B_t\\ P_t
	\end{pmatrix}=(b_{ij;t}).\]
We write $\widehat{\bf y}_t=(\hat y_{1;t},\ldots,\hat y_{n;t})$ for the collection of $\hat y$-variables associated to seed $({\bf x}_t,\widetilde B_t)$, where $$\hat y_{k;t}=\prod_{j=1}^m x_{j;t}^{b_{jk;t}}.$$

In this paper, we usually fix a vertex $t_0$ as the {\em rooted vertex} of the $n$-regular tree $\mathbb T_n$. Clearly, both $Y$-pattern and cluster pattern are uniquely determined by the data at the rooted vertex $t_0$.

\begin{definition}\label{def:dual-pair}
Let  $\mathcal S_X=\{({\bf x}_t, \widetilde B_t)\mid t\in \mathbb T_n\}$  be a cluster pattern, $\mathcal S_Y=\{({\bf y}_t, \widehat B_t)\mid t\in \mathbb T_n\}$ a $Y$-pattern, and let $\mathsf{\Lambda}=\{\Lambda_t\mid t\in\mathbb T_n\}$ be a collection of $m\times m$ skew-symmetric integer matrices indexed by the vertices in $\mathbb T_n$.

\begin{itemize}
    \item [(i)] The pair $(\mathcal S_X,\mathcal S_Y)$ is called a {\em Langlands dual pair}, if $\widehat B_{t_0}=-\widetilde B_{t_0}^T$ holds for the rooted vertex $t_0$.
    \item[(ii)] The pair $(\mathcal S_X, \mathsf{\Lambda})$ is called a {\em $\mathsf{\Lambda}$-cluster pattern}, if $\{(\widetilde B_t,\Lambda_t)\mid t\in\mathbb T_n\}$ forms a collection of compatible pairs and $(\widetilde B_{t'},\Lambda_{t'})=\mu_k(\widetilde B_t,\Lambda_t)$  whenever \begin{xy}(0,1)*+{t}="A",(10,1)*+{t'}="B",\ar@{-}^k"A";"B" \end{xy} in $\TT_n$. In this case, we call the triple $({\bf x}_t,\widetilde B_t,\Lambda_t)$ a {\em $\mathsf{\Lambda}$-seed}.
    
    \item[(iii)]  The triple $(\mathcal S_X,\mathcal S_Y,\mathsf{\Lambda})$ is called a {\em Langlands-Poisson triple}, if   $(\mathcal S_X, \mathsf{\Lambda})$ is a $\mathsf{\Lambda}$-cluster pattern and
    $(\mathcal S_X,\mathcal S_Y)$ is a Langlands dual pair.
\end{itemize}

\end{definition}
The above definition is purely for convenience of this paper. Notice that each cluster pattern has a unique Langlands dual $Y$-pattern up to isomorphism.  The following result can be checked easily.

\begin{corollary}
Let $(\mathcal S_X,\mathcal S_Y)$ be a Langlands dual pair. Then $\widehat B_{t}=-\widetilde B_{t}^T$ holds for any vertex $t$ of $\mathbb T_n$.
\end{corollary}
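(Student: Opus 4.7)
The plan is to prove the statement by induction on the graph-distance from the rooted vertex $t_0$ in $\mathbb T_n$. The base case $t = t_0$ is the defining hypothesis of a Langlands dual pair. For the inductive step, it suffices to establish the following key stability lemma: if $\widehat B = -\widetilde B^T$ with $\widetilde B$ an $m\times n$ mutation matrix and $\widehat B$ an $n\times m$ matrix whose leftmost $n\times n$ submatrix is skew-symmetrizable, then for every $k\in[1,n]$ we have $\mu_k(\widehat B) = -\mu_k(\widetilde B)^T$, where $\mu_k(\widehat B)$ is computed via the $Y$-seed mutation rule in \eqref{eqn:y-mutation} and $\mu_k(\widetilde B)$ is computed via \eqref{eqn:b-mutation}. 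Once this lemma is in hand, the corollary follows immediately, because any $t\in\mathbb T_n$ is connected to $t_0$ by a finite path $t_0 \xrightarrow{k_1} t_1 \xrightarrow{k_2} \cdots \xrightarrow{k_\ell} t$, and applying the lemma at each step propagates the identity $\widehat B_{t_i} = -\widetilde B_{t_i}^T$ from $t_0$ to $t$.

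To verify the lemma, write $\widetilde B = (b_{ij})$ with $1\le i\le m$, $1\le j\le n$, and $\widehat B = (\hat b_{ij})$ with $1\le i\le n$, $1\le j\le m$. The hypothesis $\widehat B = -\widetilde B^T$ reads $\hat b_{ij} = -b_{ji}$ for all admissible $(i,j)$. I would split the verification of $\mu_k(\widehat B)_{ij} = -\mu_k(\widetilde B)_{ji}$ into two cases. When $i=k$ or $j=k$, both mutation rules simply flip signs, so $\mu_k(\widehat B)_{ij} = -\hat b_{ij} = b_{ji} = -\mu_k(\widetilde B)_{ji}$ trivially. When $i\neq k$ and $j\neq k$, substituting $\hat b_{ab} = -b_{ba}$ into the off-diagonal formula of \eqref{eqn:y-mutation} gives
\[
\mu_k(\widehat B)_{ij} = -b_{ji} + [-b_{ki}]_+[-b_{jk}]_+ - [b_{ki}]_+[b_{jk}]_+,
\]
while a direct expansion of \eqref{eqn:b-mutation} yields
\[
-\mu_k(\widetilde B)_{ji} = -b_{ji} - [b_{jk}]_+[b_{ki}]_+ + [-b_{jk}]_+[-b_{ki}]_+,
\]
and the two expressions coincide term-by-term. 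Thus the lemma holds, and the induction closes.

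There is really no substantial obstacle here—the statement is a combinatorial compatibility between two instances of the same underlying matrix mutation rule. The only point that requires minor care is the asymmetric shape of $\widetilde B$ (size $m\times n$) versus $\widehat B$ (size $n\times m$): the indices $k$ that can be mutated lie in $[1,n]$ in both cases, and in $\widehat B$ this $k$ plays the role of a row index while in $\widetilde B$ it plays the role of a column index, which is exactly what the transpose in $\widehat B = -\widetilde B^T$ accommodates. Once this bookkeeping is made explicit, the computation above gives the result.
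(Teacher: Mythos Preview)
Your proposal is correct and follows the only natural route: induction along $\mathbb T_n$ reducing to a single-step compatibility of the two matrix mutation rules under negation-transpose. The paper itself does not write out a proof, merely remarking that the result ``can be checked easily''; your entry-by-entry verification is exactly the check one would perform, and your computation is accurate.
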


Given a $\mathsf{\Lambda}$-cluster pattern $(\mathcal S_X, \mathsf{\Lambda})$, we can define a Poisson bracket $\{-,-\}$ on the ambient field $\mathbb F$ using the $m\times m$ skew-symmetric matrix $\Lambda_{t_0}=(\lambda_{ij;t_0})$ at the rooted vertex $t_0$:
$$\{x_{i;t_0},x_{j;t_0}\} \coloneqq   \lambda_{ij;t_0}\cdot x_{i;t_0}x_{j;t_0}.$$
It turns out this Poisson bracket is compatible with the cluster pattern $\mathcal S_X$, that is, for any cluster ${\bf x}_t$ of $\mathcal S_X$, we have $$\{x_{i;t},x_{j;t}\}=\lambda_{ij;t}\cdot x_{i;t}x_{j;t},$$
where $\lambda_{ij;t}$ is the $(i,j)$-entry of $\Lambda_t$, \confer \cite{gsv-2003}, \cite[Remark 4.6]{bz-2005}.

Finally, let us give the definition of (upper) cluster algebras.
\begin{definition}
 Let $\mathcal S_X=\{({\bf x}_t, \widetilde B_t)\mid t\in \mathbb T_n\}$ be a cluster pattern of rank $n$ in $\mathbb F$.
\begin{itemize}
    \item [(i)] The {\em cluster algebra} $\mathcal A$ associated to $\mathcal S_X$ is the $\mathbb Z$-subalgebra of $\mathbb F$ given by   $$\mathcal A \coloneqq   \mathbb Z[x_{1;t},\ldots,x_{n;t},x_{n+1;t}^{\pm1},\ldots,x_{m;t}^{\pm1}\mid t\in\mathbb T_n].$$
    Denote by $\mathcal A^+ \coloneqq   \mathbb Z[x_{1;t},\ldots,x_{m;t}\mid t\in\mathbb T_n]\subseteq\mathcal A$, the version of cluster algebra with frozen variables non inverted.
    \item[(ii)] The {\em upper cluster algebra} $\mathcal U$ associated to $\mathcal S_X$  is the $\mathbb Z$-subalgebra of  $\mathbb F$
    defined by   $$\mathcal U \coloneqq   \bigcap\limits_{t\in \mathbb T_n}\mathcal L(t),$$ where $\mathcal L(t) \coloneqq   \mathbb Z[x_{1;t}^{\pm 1},\ldots,x_{m;t}^{\pm 1}]$.
    \item[(iii)]  If $(\mathcal S_X, \mathsf{\Lambda})$ is a  $\mathsf{\Lambda}$-cluster pattern, then the corresponding (upper) cluster algebra $\mathcal A$, $\mathcal A^+$ and  $\mathcal U$
    are endowed with an extra data $\mathsf{\Lambda}=\{\Lambda_t\mid t\in\mathbb T_n\}$. In this case, we call $\mathcal A$ or $\mathcal A^+$ a {\em $\mathsf{\Lambda}$-cluster algebra} and call $\mathcal U$ a {\em $\mathsf{\Lambda}$-upper cluster algebra}.
    \item[(iv)] If $m=n$,  \ie there are no frozen variables, the corresponding (upper) cluster algebra is said to be {\em with trivial coefficients}.  
\end{itemize}
\end{definition}

Notice that the mutation matrices of $\mathsf{\Lambda}$-(upper) cluster algebras always have full rank, by Proposition \ref{pro:fullrank}.

\begin{theorem}[{\cite{fz_2002}*{Laurent phenomenon}}] For any cluster variable $z$ and any seed $({\bf x}_t,\widetilde B_t)$ of a cluster algebra $\mathcal A$, we have  \[z\in \mathbb Z[x_{1;t}^{\pm 1}, \ldots, x_{n;t}^{\pm 1},x_{n+1;t},\ldots,x_{m;t}].\] In particular,  $\mathcal A$ is contained in $\mathcal U$.
\end{theorem}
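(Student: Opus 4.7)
The plan is to follow Fomin and Zelevinsky's original strategy for the Laurent phenomenon \cite{fz_2002}, proceeding by induction on the distance in $\mathbb T_n$ between the fixed seed $({\bf x}_t,\widetilde B_t)$ and the seed where the cluster variable $z$ in question appears. Writing $\mathcal L^+(t'):=\mathbb Z[x_{1;t'}^{\pm 1},\ldots,x_{n;t'}^{\pm 1},x_{n+1;t'},\ldots,x_{m;t'}]$, the target statement says every cluster variable lies in $\mathcal L^+(t)$. The base cases (distance $0$ and $1$) are immediate: at distance $0$ each cluster variable is one of the generators of $\mathcal L^+(t)$, while at distance $1$ the new variable $x_{k;\mu_k(t)}$ is by the exchange relation \eqref{eqn:x-mutation} equal to $x_{k;t}^{-1}$ times a binomial in ${\bf x}_t$, which visibly lies in $\mathcal L^+(t)$ because $k \in [1,n]$ permits inversion of $x_{k;t}$.

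For the inductive step I would invoke the classical \emph{Caterpillar Lemma} of Fomin and Zelevinsky: given a caterpillar-shaped subtree of $\mathbb T_n$ (a central path of repeated mutations in a fixed direction, with additional one-edge branches in other directions), if a rational function lies in $\mathcal L^+$ at each leaf of the caterpillar, then it lies in $\mathcal L^+$ at every vertex of the central path. By choosing a caterpillar that connects $t$ to the seed $s$ containing $z$, one reduces the Laurent property for $z$ over $\mathcal L^+(t)$ to its Laurent property in clusters at strictly shorter distances, where the inductive hypothesis applies. The induction then terminates after finitely many steps, giving $z\in \mathcal L^+(t)$.

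The hard part will be the coprimality argument underpinning the Caterpillar Lemma. In effect, when one substitutes the mutation relation \eqref{eqn:x-mutation} for $x_{k;t_{d-1}}^{-1}$ into a Laurent expression over the cluster at $t_{d-1}$ to rewrite it in terms of the cluster at $t_{d-2}$, the resulting denominator may a priori contain exchange binomials of the form $\prod_j x_{j;t_{d-2}}^{[b_{jk}]_+}+\prod_j x_{j;t_{d-2}}^{[-b_{jk}]_+}$ that are not invertible in $\mathcal L^+(t)$. One must therefore show that these binomials do not genuinely divide the denominator, which boils down to a coprimality statement between successive exchange polynomials and cluster variables in the UFD $\mathcal L(t)$. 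The standard resolution is to track the $\hat y$-variables $\hat y_{k;t'} = \prod_j x_{j;t'}^{b_{jk;t'}}$, whose rigid tropical-like behavior under mutation follows the recursion \eqref{eqn:y-mutation}, together with a direct analysis of the leading monomials of exchange polynomials relative to a chosen monomial order. Once this coprimality is in hand the induction closes, and the inclusion $\mathcal A \subseteq \mathcal U$ is then immediate: every generator of $\mathcal A$ is a cluster variable and hence lies in $\mathcal L(t')$ for every $t'\in \mathbb T_n$, so $\mathcal A\subseteq \bigcap_{t'} \mathcal L(t')=\mathcal U$.
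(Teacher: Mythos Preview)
The paper does not supply its own proof of this theorem: it is stated with a citation to \cite{fz_2002} and used as a black box. So there is nothing in the paper to compare your argument against beyond the reference itself.

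That said, your sketch is broadly in the spirit of the original Fomin--Zelevinsky proof, but a couple of points are imprecise. Your paraphrase of the Caterpillar Lemma (``if a rational function lies in $\mathcal L^+$ at each leaf of the caterpillar, then it lies in $\mathcal L^+$ at every vertex of the central path'') is not the actual statement: in \cite{fz_2002} the lemma is formulated as an inductive step along the spine, showing that the variable produced at the far end of a three-step segment is a Laurent polynomial over the base cluster, under explicit coprimality hypotheses on the exchange binomials. The induction is over the length of the spine, not a reduction to leaves. Also, the coprimality is established in \cite{fz_2002} by a direct computation with the two successive exchange relations (essentially reducing the second exchange polynomial modulo the first and checking the result is a monomial), not by ``tracking the $\hat y$-variables'' or by a monomial-order argument; the $\hat y$-variable formalism belongs to later developments and is not what drives the original proof. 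If you intend this as a self-contained argument rather than a pointer to \cite{fz_2002}, those two pieces would need to be stated and carried out correctly.
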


\begin{example}\label{ex:A2}
   Take $\widetilde B=\begin{bmatrix}
    0&1\\-1&0
\end{bmatrix}$ and ${\bf x}=(x_1,x_2)$. One can check that the  cluster algebra $\mathcal A$ defined by the initial seed $({\bf x},\widetilde B)$  has only five cluster variables:
\[x_1, \;x_2, \;x_3 \coloneqq   \frac{x_2+1}{x_1}, \;x_4 \coloneqq   \frac{x_1+x_2+1}{x_1x_2}, \;x_5 \coloneqq   \frac{x_1+1}{x_2}.\]
It is clear that all the cluster variables are contained in $\mathbb Z[x_1^{\pm 1},x_2^{\pm 1}]$.
\end{example}

\section{Tropical points, pointed elements and good elements}\label{sec3}

\subsection{Universal semifield and  tropical points}

Recall that $(\mathbb P, \cdot, \oplus)$ is called a {\em semifield} if $(\mathbb P,  \cdot)$ is an abelian multiplicative group endowed with a binary operation of auxiliary addition $\oplus$ which is commutative, associative and satisfies that the multiplication  distributes over the auxiliary addition. For example, $$\mathbb Z^{\rm min} \coloneqq   (\mathbb Z,+,\min\{-,-\})\;\;\;\text{and}\;\;\; \mathbb Z^{\rm max} \coloneqq   (\mathbb Z,+, \max\{-,-\})$$ 
are semifield and they are called {\em tropical semifield}.

Let $\QQ_{\rm sf}(u_1, \ldots, u_m)$ be the
set of all non-zero rational functions in $u_1, \ldots, u_m$ that have subtraction free expressions. The set $\QQ_{\rm sf}(u_1, \ldots, u_m)$  is a semifield
with respect to the usual operations of multiplication and addition. It is called an {\em universal semifield}.

From now on, we fix $\mathbb F \coloneqq   \mathbb Q(u_1,\ldots,u_m)$ and $\mathbb F_{>0} \coloneqq   \QQ_{\rm sf}(u_1, \ldots, u_m)$.
For any semifield $\mathbb P$ and  ${\bf p}=(p_1,\ldots,p_m)^T\in\mathbb P^m$, there exists a unique semifield homomorphism $${\pi}_{\bf p}:\mathbb F_{>0}\rightarrow \mathbb P$$ induced by $u_i\mapsto p_i$ for any $i$.
The map $\pi\colon{\bf p}\mapsto \pi_{\bf p}$ induces a bijection from $\mathbb P^m$ to the set $\Hom_{\rm sf}(\mathbb  F_{>0},\mathbb P)$ of semifield homomorphisms from $\mathbb F_{>0}$ to $\mathbb P$.

In this paper, we often take $\mathbb P=\mathbb Z^{\rm max}=(\mathbb Z,+,\max)$. The elements in $\Hom_{\rm sf}(\mathbb  F_{>0},\mathbb Z^{\rm max})$ are called {\em tropical $\mathbb Z^{\rm max}$-points} or simply {\em tropical points}, which are in bijection with the points in $\mathbb Z^m$.

\begin{definition}
 A {\em chart} in $\mathbb F_{>0}=\QQ_{\rm sf}(u_1, \ldots, u_m)$  is an $m$-tuple ${\bf z}=(z_1,\ldots,z_m)$
 of elements in $\mathbb  F_{>0}$ that generate $\mathbb F_{>0}$ as a semifield.
\end{definition}

The statements in the following proposition are clear.
\begin{proposition}\label{pro:chart}
    Let  $\mathcal C=\{{\bf u}_t\mid t\in\mathbb T\}$ be a collection of charts in $\mathbb F_{>0}=\QQ_{\rm sf}(u_1, \ldots, u_m)$ indexed by a set $\mathbb T$. Then the following statements hold.
    \begin{itemize}
    \item [(i)] For any chart ${\bf u}_t=(u_{1;t},\ldots,u_{m;t})$, denote by $u_i({\bf u}_t)$ the expression of $u_i$ as an element in $\mathbb  F(t)_{>0} \coloneqq   \mathbb Q_{\rm sf}(u_{1;t},\ldots,u_{m;t})$. The map
     $\varphi_t\colon u_i\mapsto u_i({\bf u}_t)$  induces an isomorphism of semifields from $\mathbb  F_{>0}$ to $\mathbb  F(t)_{>0}$.

        \item [(ii)] For any chart ${\bf u}_t=(u_{1;t},\ldots,u_{m;t})$ and any $\beta\in\Hom_{\rm sf}(\mathbb  F_{>0},\mathbb Z^{\rm max})$, denote by
        $${\bf q}^t(\beta) \coloneqq   (\beta(u_{1;t}),\ldots,\beta(u_{m;t}))^T\in\mathbb Z^m.$$ Then the following diagram commutes.
\[
\xymatrix{\mathbb F_{>0}\ar[r]^{\varphi_t\;\;\;}\ar[d]_{\beta}&\mathbb  F(t)_{>0}\ar[ld]^{\pi_{{\bf q}^t(\beta);t}}\\
\mathbb Z^{\rm max}}
\]
where $\pi_{{\bf q}^t(\beta);t}$ is the unique semifield homomorphism from $\mathbb  F(t)_{>0}$ to $\mathbb Z^{\rm max}$ induced by $u_{i;t}\mapsto \beta(u_{i;t})$ for any $i$.

\item[(iii)] The map $\beta\mapsto {\bf q}^t(\beta)$ induces a bijection from the tropical points in $\Hom_{\rm sf}(\mathbb F_{>0},\mathbb Z^{\rm max})$ and the points in $\mathbb Z^m$.
    \end{itemize}
\end{proposition}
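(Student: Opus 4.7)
The proposition is a formal consequence of a single universal property: $\mathcal F_{>0} = \mathbb Q_{\rm sf}(u_1,\ldots,u_m)$ is free on the $m$ generators $u_i$ in the sense that for any semifield $S$ and any set map $\{u_1,\ldots,u_m\} \to S$ there is a \emph{unique} semifield homomorphism $\mathcal F_{>0} \to S$ extending it. This is exactly what produces the maps $\pi_{\bf p}$ mentioned just before the statement, and I plan to invoke it repeatedly below.

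\textbf{For (i),} the plan is to define $\varphi_t$ via the universal property as the unique semifield homomorphism sending $u_i \mapsto u_i({\bf u}_t) \in \mathcal F_{>0;t}$, and to construct a candidate inverse $\psi_t:\mathcal F_{>0;t}\to\mathcal F_{>0}$ as the unique semifield homomorphism sending the formal generator $u_{j;t}$ of $\mathcal F_{>0;t}$ to the element $u_{j;t}\in\mathcal F_{>0}$. The existence of $\psi_t$ uses that ${\bf u}_t$ is a chart, so each $u_{j;t}$ really does live in $\mathcal F_{>0}$. Then $\psi_t\circ\varphi_t$ fixes every $u_i$ and $\varphi_t\circ\psi_t$ fixes every formal $u_{j;t}$, so by uniqueness both compositions are identities. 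A key by-product, which will drive everything that follows, is the cross-copy identity $\varphi_t(u_{i;t})=u_{i;t}$ (formal variable on the right).

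\textbf{For (ii),} both $\nu$ and $\pi_{{\bf q}_t(\nu);t}\circ\varphi_t$ are semifield homomorphisms $\mathcal F_{>0}\to\mathbb Z^T$. Since ${\bf u}_t$ is a chart, $\{u_{1;t},\ldots,u_{m;t}\}$ generates $\mathcal F_{>0}$ as a semifield, so by uniqueness in the universal property it suffices to check equality of the two maps on these generators. Using $\varphi_t(u_{i;t})=u_{i;t}$ from (i), the right-hand composition sends $u_{i;t}$ to $\pi_{{\bf q}_t(\nu);t}(u_{i;t})=\nu(u_{i;t})$ by the very definition of ${\bf q}_t(\nu)$, matching the left-hand side.

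\textbf{For (iii),} the commutative diagram in (ii) rewrites as $\nu = \pi_{{\bf q}_t(\nu);t}\circ\varphi_t$, so $\nu$ is determined by ${\bf q}_t(\nu)$ (injectivity); for surjectivity I set $\nu:=\pi_{{\bf q};t}\circ\varphi_t$ for a given ${\bf q}\in\mathbb Z^m$ and compute ${\bf q}_t(\nu)={\bf q}$ directly from the same identity $\varphi_t(u_{i;t})=u_{i;t}$. \textbf{The main obstacle} is purely one of bookkeeping in (i): one must carefully distinguish the formal variable $u_{j;t}$ of $\mathcal F_{>0;t}=\mathbb Q_{\rm sf}(u_{1;t},\ldots,u_{m;t})$ from the element $u_{j;t}\in\mathcal F_{>0}$, so that $\varphi_t(u_{i;t})=u_{i;t}$ becomes a genuine identification across two semifields rather than a tautology. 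Once that distinction is made clean, the entire proposition reduces to a single application of the freeness of $\mathcal F_{>0}$.
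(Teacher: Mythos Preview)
Your argument is correct and is precisely the unpacking of what the paper leaves implicit: the paper's own proof is the single word ``Obviously,'' relying on the universal property of $\mathcal F_{>0}$ that was recorded just before the statement (the bijection ${\bf p}\mapsto\pi_{\bf p}$). Your proposal simply makes that reliance explicit, so there is no difference in approach to compare.
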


Keep the setting in the above proposition. We call the column vector $${\bf q}^t(\beta)=(\beta(u_{1;t}),\ldots,\beta(u_{m;t}))^T\in\mathbb Z^m$$ the {\em coordinate vector} of the tropical point $\beta$ under the chart ${\bf u}_t$. When a collection $\mathcal C=\{{\bf u}_t\mid t\in\mathbb T\}$ of charts in $\mathbb  F_{>0}$ is given, we often identify a tropical point $\beta\in\Hom_{\rm sf}(\mathbb  F_{>0},\mathbb Z^{\rm max})$ with the collection $\{{\bf q}^t(\beta)\in\mathbb Z^m\mid t\in\mathbb T\}$ of coordinate vectors of $\beta$ under the charts.

Now let us go back to the situations that we are interested in.

\begin{itemize}
\item If we work on a $Y$-pattern $\mathcal S_Y=\{({\bf y}_t, \widehat B_t)\mid t\in \mathbb T_n\}$, we take $\mathbb F_{>0}$ to be the universal semifield generated by the initial $y$-variables $y_{1;t_0},\ldots, y_{m;t_0}$. Then
$\mathcal C \coloneqq   \{{\bf y}_t\mid t\in\mathbb T_n\}$ is a collection of charts in $\mathbb F_{>0}$.  We usually denote by 
$$\{{\bf g}^t(\beta)\in\mathbb Z^m\mid t\in\mathbb T_n\}$$ the collection of coordinate vectors of a tropical point $\beta\in\Hom_{\rm sf}(\mathbb  F_{>0},\mathbb Z^{\rm max})$ for the $Y$-pattern case.

\item If we work on a cluster pattern  $\mathcal S_X=\{({\bf x}_t, \widetilde B_t)\mid t\in \mathbb T_n\}$, we take $\mathbb F_{>0}$ to be the universal semifield generated by the initial
 cluster variables $x_{1;t_0},\ldots, x_{m;t_0}$. Then $\mathcal C \coloneqq   \{{\bf x}_t\mid t\in\mathbb T_n\}$ is a collection of charts  in $\mathbb F_{>0}$.
 We usually denote by 
$$\{{\bf a}^t(\beta)\in\mathbb Z^m\mid t\in\mathbb T_n\}$$ the collection of coordinate vectors of a tropical point $\beta\in\Hom_{\rm sf}(\mathbb  F_{>0},\mathbb Z^{\rm max})$ for the cluster pattern case.
\end{itemize}

Since the transition maps between any two adjacent charts are clear in both $Y$-pattern and cluster pattern, the corresponding tropical points (after identifying with the corresponding coordinate vectors) can be defined using the tropical version of the transition maps in \eqref{eqn:y-mutation} and \eqref{eqn:x-mutation}.

\begin{definition}[Tropical points]
    Let $\mathcal S_Y=\{({\bf y}_t, \widehat B_t)\mid t\in \mathbb T_n\}$ be a $Y$-pattern of rank $n$ in $\mathbb F$ and $\mathcal S_X=\{({\bf x}_t, \widetilde B_t)\mid t\in \mathbb T_n\}$ a cluster pattern of rank $n$ in $\mathbb F$.
\begin{itemize}
    \item [(i)] A tropical point $[{\bf g}]=\{{\bf g}^t\in\mathbb Z^m\mid t\in\mathbb T_n\}$ associated to the $Y$-pattern $\mathcal S_Y$ is an assignment of a (column) vector ${\bf g}^t=(g_{1}^t,\ldots,g_{m}^t)^T$ in $\mathbb Z^m$ to each vertex $t$ of $\mathbb T_n$ such that
    \begin{eqnarray}\label{eqn:y-trop}
g_{i}^{t'}=\begin{cases}-g_{k}^t,& \text{if}\;i=k;\\ g_{i}^t+[\hat b_{ki;t}]_+g_{k}^t+(-\hat b_{ki;t})[g_{k}^t]_+,&\text{if}\;i\neq k. \end{cases}
\end{eqnarray}
whenever
	\begin{xy}(0,1)*+{t}="A",(10,1)*+{t'}="B",\ar@{-}^k"A";"B" \end{xy} in $\TT_n$. We denote by $\mathcal S_Y(\mathbb Z^{\rm max})$ the set of tropical points associated to the $Y$-pattern $\mathcal S_Y$.

 \item[(ii)] A tropical point $[{\bf a}]=\{{\bf a}^t\in\mathbb Z^m\mid t\in\mathbb T_n\}$ associated to the cluster pattern $\mathcal S_X$ is an assignment of a (column) vector ${\bf a}^t=(a_{1}^t,\ldots,a_{m}^t)^T$ in $\mathbb Z^m$ to each vertex $t$ of $\mathbb T_n$ such that
    \begin{eqnarray}\label{eqn:x-trop}
a_{i}^{t'}=\begin{cases}-a_{k}^t+\max\{\sum_{j=1}^m[b_{jk;t}]_+a_{j}^t, \; \sum_{j=1}^m[-b_{jk;t}]_+a_{j}^t\}
,& \text{if}\;i=k;\\ a_{i}^t,&\text{if}\;i\neq k. \end{cases}
\end{eqnarray}
whenever
	\begin{xy}(0,1)*+{t}="A",(10,1)*+{t'}="B",\ar@{-}^k"A";"B" \end{xy} in $\TT_n$.
 We denote by $\mathcal S_X(\mathbb Z^{\rm max})$ the set of tropical points associated to the cluster pattern $\mathcal S_X$.
\end{itemize}
\end{definition}

\begin{remark}
The relations in \eqref{eqn:y-trop} and  \eqref{eqn:x-trop}  are obtained from mutation relations \eqref{eqn:y-mutation} and \eqref{eqn:x-mutation} by tropicalization over the tropical semifield $\mathbb Z^{\rm max}=(\mathbb Z,+,\max\{-,-\})$. Namely, we make the following replacements.
\begin{itemize}
    \item Replace the multiplication and addition in \eqref{eqn:y-mutation} and \eqref{eqn:x-mutation}  by ``+" and ``{\rm max}\{--,--\}" over $\mathbb Z$, respectively.
     \item Replace ``1" in \eqref{eqn:y-mutation} by ``0";
\end{itemize}
\end{remark}

The following corollary is obvious.
\begin{corollary}\label{cor:bijection} The following statements hold.
\begin{itemize}
\item[(i)] Each tropical point $[{\bf g}]=\{{\bf g}^t\in\mathbb Z^m\mid t\in\mathbb T_n\}$ in $\mathcal S_Y(\mathbb Z^{\rm max})$ corresponds to a unique semifield homomorphism $\beta\in\Hom_{\rm sf}(\mathbb  F_{>0},\mathbb Z^{\rm max})$ such that $$\beta({\bf y}_t)=(\beta(y_{1;t}),\ldots,\beta(y_{m;t}))=({\bf g}^t)^T$$
for any vertex $t\in\mathbb T$, where $\mathbb F_{>0}=\mathbb Q_{\rm sf}(y_{1;t_0},\ldots, y_{m;t_0})$.
\item[(ii)] Each tropical point $[{\bf a}]=\{{\bf a}^t\in\mathbb Z^m\mid t\in\mathbb T_n\}$ in $\mathcal S_X(\mathbb Z^{\rm max})$ corresponds to a unique semifield homomorphism $\beta\in\Hom_{\rm sf}(\mathbb  F_{>0},\mathbb Z^{\rm max})$ such that $$\beta({\bf x}_t)=(\beta(x_{1;t}),\ldots,\beta(x_{m;t}))=({\bf a}^t)^T$$
for any vertex $t\in\mathbb T$, where $\mathbb F_{>0}=\mathbb Q_{\rm sf}(x_{1;t_0},\ldots, x_{m;t_0})$.
\end{itemize}
\end{corollary} 

\subsection{Compatibly pointed elements and good elements}
In this subsection, we fix a {\em full rank} upper cluster algebra $\mathcal U$, which means its initial mutation matrix $\widetilde B_{t_0}$ has the full rank $n$, equivalently, any mutation matrix $\widetilde B_t$ of $\mathcal U$ has the full rank $n$.

 Since $\mathcal U$ is of full rank, each seed $({\bf x}_t,\widetilde B_t)$ of $\mathcal U$ defines a partial order $\preceq_t$ on $\mathbb Z^m$. For two vectors ${\bf g},{\bf g}'\in\mathbb Z^m$, we write ${\bf g}'\preceq_t{\bf g}$ if there exists some vector ${\bf v}=(v_1,\ldots,v_n)^T\in\mathbb N^n$ such that $${\bf g}'={\bf g}+\widetilde B_t{\bf v},$$ equivalently, ${\bf x}_t^{{\bf g}'}={\bf x}_t^{\bf g}\cdot \widehat {\bf y}_t^{\bf v}$, where $\widehat {\bf y}_t^{\bf v}={\bf x}_t^{\widetilde B_t{\bf v}}$. We denote by ${\bf g}'\prec_t{\bf g}$ for the case ${\bf g}'\preceq_t{\bf g}$ and ${\bf g}'\neq {\bf g}$.

The partial order  $\preceq_t$ above is known as {\em dominance order} associated to seed $({\bf x}_t,\widetilde B_t)$ in \cite{Qin_2017}. The dominance order plays an important role in study of the bases problem of (upper) cluster algebras.

For any seed $({\bf x}_t,\widetilde B_t)$ of $\mathcal U$, we denote by $R_t$ the collection of the formal Laurent series
$$u=\sum_{{\bf h}\in\mathbb Z^m}b_{\bf h}{\bf x}_t^{\bf h},\;\;\;b_{\bf h}\in\mathbb Z,$$
such that the set $\{{\bf h}\in\mathbb Z^m\mid b_{\bf h}\neq 0\}$ has finitely many maximal elements with respect to the dominance order $\preceq_t$ on $\mathbb Z^m$. It is easy to see that the collection $R_t$ has a ring structure.

Now we recall the definition of pointed elements introduced in \cite{Qin_2017}. Let $$u=\sum_{{\bf h}\in\mathbb Z^m}b_{\bf h}{\bf x}_t^{\bf h}\in R_t.$$ We say that $u$ is {\em pointed} for the seed $({\bf x}_t,\widetilde B_t)$ if the following two conditions are satisfied.
\begin{itemize}
\item The set $\{{\bf h}\in\mathbb Z^m\mid b_{\bf h}\neq 0\}$ has a unique maximal element ${\bf g}$ under the dominance order $\preceq_t$ on $\mathbb Z^m$. We denote by $\de^t(u) \coloneqq   {\bf g}$ and call it the {\em degree} of $u$ with respect to the seed $({\bf x}_t,\widetilde B_t)$.
    \item The coefficient corresponds to the degree term is $1$,  \ie $b_{\bf g}=1$ for ${\bf g}=\de^t(u)$.
\end{itemize}

By definition, each pointed element $u$ for the seed $({\bf x}_t,\widetilde B_t)$ takes the following form:
$$u={\bf x}_t^{\bf g}+\sum_{{\bf h}\prec_t {\bf g}}b_{\bf h}{\bf x}_t^{\bf h}={\bf x}_t^{\bf g}F(\hat y_{1;t},\ldots,\hat y_{n;t}),$$
where $b_{\bf h}\in\mathbb Z$ and $F\in\mathbb Z[[y_1,\ldots,y_n]]$ is a polynomial series with constant term $1$.

In cluster algebras, we are often interested in the pointed elements which can be controlled by tropical points associated to a $Y$-pattern. Such elements correspond to compatibly pointed elements introduced in \cite{qin_2019}.

\begin{definition}[Compatibly pointed elements and good elements]\label{def:pointed}
 Let $(\mathcal S_X,\mathcal S_Y)$ be a Langlands dual pair and $\mathcal U$ the upper cluster algebra associated to the cluster pattern $\mathcal S_X$. Assume that $\mathcal U$ is of full rank.

 \begin{itemize}
 \item[(i)] An element $u\in\mathcal U$ is said to be {\em compatibly pointed}, if $u$ is pointed for any seed $({\bf x}_t,\widetilde B_t)$ of $\mathcal U$ and the collection $[{\bf g}] \coloneqq   \{\de^t(u)\in\mathbb Z^m\mid t\in\mathbb T_n\}$ forms a tropical point in $\mathcal S_Y(\mathbb Z^{\rm max})$. In this case, we also say that $u$ is {\em $[{\bf g}]$-pointed}.

     \item[(ii)] A $[{\bf g}]$-pointed element $u\in\mathcal U$ is said to be {\em $[{\bf g}]$-good}, if it is universally positive,  \ie $u\in\mathbb Z_{\geq 0}[x_{1;t}^{\pm 1},\ldots,x_{m;t}^{\pm 1}]$ for any vertex $t\in\mathbb T_n$.
\end{itemize}
\end{definition}

 Let $u$ be a $[{\bf g}]$-pointed element,  then the Laurent expansion of  $u$ with respect to any seed $({\bf x}_t,\widetilde B_t)$ of $\mathcal U$ has a {\em canonical expression:}
\begin{eqnarray}\label{eqn:upointed}
   u={\bf x}_t^{{\bf g}_u^t}F_u^t(\hat y_{1;t},\ldots,\hat y_{n;t}),
 \end{eqnarray}
where ${\bf g}_u^t=\de^t(u)$ is the value of $[{\bf g}]$ at vertex $t$ and $F_u^t$ is a polynomial in $\mathbb Z[y_1,\ldots,y_n]$ with constant term $1$.
\begin{itemize}
\item The degree vector ${\bf g}_u^t=\de^t(u)$ is often called the {\em extended $g$-vector} of $u$ with respect to vertex $t$; 

\item The polynomial $F_u^t$ is called the {\em $F$-polynomial} of $u$ with respect to vertex $t$;
\item For $k\in[1,n]$, we denote by $f_{k;u}^t$ the maximal degree of $y_k$ in $F_u^t$. The vector $${\bf f}_u^t \coloneqq   (f_{1;u}^t,\ldots,f_{n;u}^t)^T\in\mathbb N^n$$ is called
 the {\em $f$-vector} of $u$ with respect to vertex $t$;
 \item The $[{\bf g}]$-pointed element $u$ is said to be {\em $[{\bf g}]$-bipointed}, if for any vertex  $t\in\mathbb T_n$ the monomial $y_1^{f_{1;u}^t}\cdots y_n^{f_{n;u}^t}$ appears in $F_u^t$ and it has  coefficient $1$;
     \item The $[{\bf g}]$-pointed element $u$ is said to be {\em $[{\bf g}]$-bigood}, if it is $[{\bf g}]$-bipointed and universally positive.
\end{itemize}

 Recall that a {\em cluster monomial} in  $\mathcal U$ is a Laurent monomial of the form
\[{\bf x}_w^{\bf v} \coloneqq   \prod_{j=1}^mx_{j;w}^{v_j}\]
for some cluster ${\bf x}_w$ of $\mathcal U$ and some vector ${\bf v}=(v_1,\ldots,v_m)^T\in \mathbb Z^m$ with $v_i\geq 0$ for any $i\in[1,n]$.  Notice that we allow $v_j$ to be negative for $j\in[n+1,m]$.

Keep the setting as in Definition \ref{def:pointed}. Given a vertex $w\in\mathbb T_n$ and a vector ${\bf v}\in \mathbb Z^m$, we denote by $[({\bf v},w)]$ the unique tropical point $\{{\bf g}^t\in\mathbb Z^m\mid t\in\mathbb T_n\}$ in $\mathcal S_Y(\mathbb Z^{\rm max})$ determined by the condition ${\bf g}^w={\bf v}$.

\begin{proposition} \label{pro:ghkk}
 Let ${\bf x}_w^{\bf v}$ be a cluster monomial in $\mathcal U$. Then ${\bf x}_w^{\bf v}$ is $[({\bf v},w)]$-bigood in $\mathcal U$.
\end{proposition}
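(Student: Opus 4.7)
The plan is to verify each of the four conditions comprising \emph{$[({\bf v},w)]$-bigood}: pointedness at every seed, coincidence with the tropical point $[({\bf v},w)]$, bipointedness, and universal positivity. I would reduce to the corresponding known properties of individual cluster variables via the Fomin--Zelevinsky separation formula, then combine them multiplicatively.

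First, for each unfrozen $i \in [1,n]$, I would invoke the separation formula:
\[
x_{i;w} = {\bf x}_t^{{\bf g}_{i;t}}\, F_{i;t}(\hat y_{1;t},\ldots,\hat y_{n;t}),
\]
where $F_{i;t}$ lies in $\mathbb Z_{\geq 0}[y_1,\ldots,y_n]$, has constant term $1$, and has leading monomial of coefficient $1$. The non-negativity of coefficients is the positivity theorem (Lee--Schiffler in the skew-symmetric case, Gross--Hacking--Keel--Kontsevich in the skew-symmetrizable case), and the bipointedness of $F$-polynomials of cluster variables is standard. For frozen $j \in [n+1,m]$, $x_{j;w} = x_{j;t} = {\bf x}_t^{{\bf e}_j}$, so we take $({\bf g}_{j;t}, F_{j;t}) = ({\bf e}_j, 1)$. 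Multiplying yields
\[
{\bf x}_w^{\bf v} = {\bf x}_t^{{\bf g}_t}\, F_t(\hat y_{1;t},\ldots,\hat y_{n;t}), \quad {\bf g}_t := \sum_{i=1}^m v_i{\bf g}_{i;t}, \quad F_t := \prod_{i=1}^m F_{i;t}^{v_i}.
\]
Since $v_i \geq 0$ for $i \in [1,n]$, $F_t$ is a polynomial in $\mathbb Z_{\geq 0}[y_1,\ldots,y_n]$ with constant term $1$; non-negativity of coefficients prevents cancellation, so its leading monomial also has coefficient $1$. The full-rank hypothesis on $\widetilde B_t$ ensures that distinct exponents $\widetilde B_t {\bf h}$ for ${\bf h}\in\mathbb N^n$ produce distinct ${\bf x}_t$-exponents, so the constant term $1$ corresponds to the unique $\preceq_t$-maximal term ${\bf x}_t^{{\bf g}_t}$. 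This establishes pointedness, bipointedness, and universal positivity at every seed.

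It remains to identify $\{{\bf g}_t\}_{t\in\mathbb T_n}$ with the tropical point $[({\bf v},w)]$. At $t=w$, one has ${\bf g}_{i;w} = {\bf e}_i$ for $i \leq n$, hence ${\bf g}_w = {\bf v}$, matching the defining value of $[({\bf v},w)]$. For general $t$, I would use the following general fact (applied inductively on the $\mathbb T_n$-distance from $w$): any element that is pointed at two adjacent seeds $t$ and $t'=\mu_k(t)$ has $g$-vectors related by formula \eqref{eqn:y-trop}. This is derived by substituting the exchange relation \eqref{eqn:x-mutation} into the pointed expansion of ${\bf x}_w^{\bf v}$ at $t$, re-expanding in ${\bf x}_{t'}$, and isolating the unique $\preceq_{t'}$-maximal term, after translating the mutation of $\widetilde B_t$ via the Langlands-dual identification $\widehat B_t = -\widetilde B_t^T$.

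The main obstacle is this last step: the transformation is piecewise linear, splitting into two cases according to $\mathrm{sgn}(g_{k;t})$, so one must carefully track signs and match them against the two branches of \eqref{eqn:y-trop}. Once this sign bookkeeping is completed and the full-rank hypothesis is used to isolate the top term at $t'$, the argument is routine.
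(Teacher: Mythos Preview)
Your product decomposition is correct and largely parallels the paper's citations: positivity of each $F_{i;t}$ is the GHKK positivity theorem, and the presence of the top monomial $\prod_k y_k^{f_k}$ with coefficient $1$ in each cluster-variable $F$-polynomial is what the paper extracts from \cite[Proposition~5.3]{fomin_zelevinsky_2007} combined with the constant term being $1$. Closing these properties under products is routine.

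The genuine gap is in your last step. The ``general fact'' you state---that any element pointed at two adjacent seeds $t$ and $t'=\mu_k(t)$ has degrees related by \eqref{eqn:y-trop}---is false. Take $n=1$, $m=2$, $\widetilde B_t=\begin{pmatrix}0\\1\end{pmatrix}$, and $u=1+x_{2;t}$. This $u$ is pointed at both $t$ and $t'$, with $\de^t(u)=(0,0)^T$ and $\de^{t'}(u)=(0,1)^T$, whereas \eqref{eqn:y-trop} applied to $(0,0)^T$ returns $(0,0)^T$. The point is that the degree at $t'$ is governed by \emph{which} monomial $y_k^d$ of $F_t$ becomes $\preceq_{t'}$-dominant after mutation, and this $d$ need not equal $[-g_{k;t}]_+$; Corollary~\ref{cor:deg2}(ii) shows that $d=[-g_{k;t}]_+$ is a \emph{consequence} of compatible pointedness, not a route to establishing it. What actually makes the identification work for ${\bf x}_w^{\bf v}$ is sign-coherence: each individual $g$-vector ${\bf g}_{i;t}$ obeys \eqref{eqn:y-trop} as $t$ varies (the $g$-vector recursion for cluster variables), and row sign-coherence of the $G$-matrix forces the piecewise-linear mutation to be genuinely linear on the non-negative span of the ${\bf g}_{i;t}$, so that $\sum_i v_i{\bf g}_{i;t}$ also obeys \eqref{eqn:y-trop}. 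Both facts are theorems of \cite{GHKK18}. The paper bypasses all of this by citing \cite{GHKK18} once for ``$[({\bf v},w)]$-good'' and then invoking \cite[Proposition~5.3]{fomin_zelevinsky_2007} for bipointedness.
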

\begin{proof}
By the results in \cite{GHKK18}, we know that $u \coloneqq   {\bf x}_w^{\bf v}$ is $[({\bf v},w)]$-good in $\mathcal U$.
Let ${\bf f}_u^t=(f_{1;u}^t,\ldots,f_{n;u}^t)^T$ be the $f$-vector of  $u={\bf x}_w^{\bf v}$ with respect to vertex $t\in\mathbb T_n$. Since the $F$-polynomials of cluster variables have constant term $1$ and by \cite[Proposition 5.3]{fomin_zelevinsky_2007}, we know that the monomial $y_1^{f_{1;u}^t}\cdots y_n^{f_{n;u}^t}$ appears in the $F$-polynomial $F_u^t$ with coefficient $1$. Hence, $u={\bf x}_w^{\bf v}$ is $[({\bf v},w)]$-bigood in $\mathcal U$.
\end{proof}

\begin{example}\label{ex:A2-2}
Let us continue with Example \ref{ex:A2}. We see that $\widetilde B=\begin{bmatrix}
    0&1\\-1&0
\end{bmatrix}$ has full rank and $\hat y_1=x_2^{-1},\;\hat y_2=x_1$. The canonical expressions of the five cluster variables
$$x_1, \;x_2, \;x_3=\frac{x_2+1}{x_1}, \;x_4=\frac{x_1+x_2+1}{x_1x_2}, \;x_5=\frac{x_1+1}{x_2}$$
with respect to the initial seed $({\bf x},\widetilde B)$ are given as follows:
    \[x_1=x_1\cdot 1,\;\;\;x_2=x_2\cdot 1,\;\;\;x_3={x_1^{-1}x_2\cdot(1+\widehat y_1),}\;\;\;  x_4={x_1^{-1}\cdot (1+\widehat y_1+\widehat y_1\widehat y_2),}\;\;\;
    x_5={x_2^{-1}\cdot (1+\widehat y_2)}.\]
\end{example}

\subsection{Basic properties of pointed elements}
As before, we fix a full rank upper cluster algebra $\mathcal U$.  Let us recall some basic properties of pointed elements in $R_t$.

\begin{lemma}
[{\cite{qin_2019}*{Lemmas 3.2.5, 3.2.6}}]
\label{lem:qin1}
Suppose that $u, u^\prime\in R_t$ are two pointed elements for seed  $({\bf x}_t,\widetilde B_t)$. Then the following statements hold.

\begin{itemize}
    \item [(i)] The product $u u^\prime\in R_t$ is  pointed  for seed  $({\bf x}_t,\widetilde B_t)$ and we have   $$\de^{t}(u u^\prime)=\de^{t}(u)+\de^{t}(u^\prime).$$
    \item[(ii)] $u$ has a multiplicative inverse $u^{-1}$ in $R_{t}$. Moreover, $u^{-1}$ is pointed for $({\bf x}_t,\widetilde B_t)$ and  $$\de^{t}(u^{-1})=-\de^{t}(u).$$
\end{itemize}
\end{lemma}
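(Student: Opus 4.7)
The plan is to reduce both statements to formal manipulations in the power series ring $\mathbb{Z}[[\hat{\bf y}_{1;t},\ldots,\hat{\bf y}_{n;t}]]$ after factoring out the leading monomial. Since $\widetilde B_t$ has full rank $n$, any vector ${\bf h}\preceq_t {\bf g}$ can be written uniquely as ${\bf h}={\bf g}+\widetilde B_t{\bf v}$ for some ${\bf v}\in\mathbb N^n$, and ${\bf x}_t^{\widetilde B_t{\bf v}}=\widehat{\bf y}_t^{\bf v}$. Therefore a pointed element $u\in R_t$ with $\de^t(u)={\bf g}$ admits a unique expression
\[
u \;=\; {\bf x}_t^{\bf g}\,F(\widehat{\bf y}_{1;t},\ldots,\widehat{\bf y}_{n;t}),\qquad F\;=\;1+\sum_{{\bf v}\in\mathbb N^n\setminus\{0\}}a_{\bf v}\,y^{\bf v}\;\in\;\mathbb Z[[y_1,\ldots,y_n]],
\]
with constant term $1$. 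Conversely, any such expression yields an element of $R_t$ pointed at ${\bf g}$, because the support $\{{\bf g}+\widetilde B_t{\bf v}\mid {\bf v}\in\mathbb N^n\}$ has $\{{\bf g}\}$ as its unique $\preceq_t$-maximum.

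For part (i), write $u={\bf x}_t^{\bf g}F$ and $u'={\bf x}_t^{{\bf g}'}F'$ as above. Then formally
\[
u\cdot u' \;=\; {\bf x}_t^{{\bf g}+{\bf g}'}\,(FF'),
\]
and for each ${\bf v}\in\mathbb N^n$ the coefficient of $y^{\bf v}$ in $FF'$ is the finite sum $\sum_{{\bf v}_1+{\bf v}_2={\bf v}}a_{{\bf v}_1}a'_{{\bf v}_2}$, so $FF'$ is a well-defined element of $\mathbb Z[[y_1,\ldots,y_n]]$ with constant term $1$. By the observation in the first paragraph, $u\cdot u'$ lies in $R_t$ and is pointed with $\de^t(u\cdot u')={\bf g}+{\bf g}'$.

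For part (ii), decompose $F=1+F_+$ with $F_+\in\mathbb Z[[y_1,\ldots,y_n]]$ having zero constant term. Then $F_+^{\,k}$ only contributes monomials $y^{\bf w}$ with $|{\bf w}|\ge k$, so for each fixed ${\bf v}$ only finitely many of the terms $(-F_+)^k$ contribute to the coefficient of $y^{\bf v}$. Hence
\[
G \;:=\; \sum_{k\ge 0}(-F_+)^k \;\in\; \mathbb Z[[y_1,\ldots,y_n]]
\]
is well-defined with constant term $1$, and $FG=1$ by the usual geometric series identity. Setting $u^{-1}:={\bf x}_t^{-{\bf g}}\,G(\widehat{\bf y}_{1;t},\ldots,\widehat{\bf y}_{n;t})$ yields a pointed element of $R_t$ with $\de^t(u^{-1})=-{\bf g}$, and part (i) applied to $u$ and $u^{-1}$ gives $u\cdot u^{-1}={\bf x}_t^{\bf 0}\cdot FG=1$.

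The only genuine point to verify carefully is that $u^{-1}$ really belongs to $R_t$, i.e.\ that its support has finitely many $\preceq_t$-maximal elements; this is immediate from the presentation ${\bf x}_t^{-{\bf g}}G(\widehat{\bf y}_t)$ together with the fact that $G$ has constant term $1$, which forces $-{\bf g}$ to be the unique maximum. I expect no serious obstacle beyond bookkeeping: the full rank of $\widetilde B_t$ ensures the bijection between $\preceq_t$-chains below ${\bf g}$ and vectors in $\mathbb N^n$, which is what makes the formal power series algebra translate faithfully into $R_t$.
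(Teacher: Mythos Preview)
Your argument is correct: factoring out the leading monomial and working in $\mathbb Z[[y_1,\ldots,y_n]]$ via the full-rank hypothesis is exactly the right way to see both statements, and your bookkeeping is clean. Note that the paper itself does not prove this lemma but simply cites it from \cite{qin_2019}; your proof is the standard one and is essentially what one would find there.
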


Let  $({\bf x}_t,\widetilde B_t)$ and $({\bf x}_{w},\widetilde B_{w})$ be two seeds of $\mathcal U$ at vertices $t,w\in\mathbb T_n$.
 By Proposition \ref{pro:ghkk}, we know that any cluster variable $x_{j;t}$ ($j\in[1,m]$) is pointed for $({\bf x}_{w},\widetilde B_{w})$.
 We denote by $\widetilde G_t^w$ the $m\times m$ matrix whose $j$th column is given by ${\bf g}_{x_{j;t}}^w=\de^w(x_{j;t})\in\mathbb Z^m$. Notice that $\widetilde G_t^w$ takes the form $$\widetilde G_t^w=\begin{bmatrix}G_t^w&0\\
 \ast&I_{m-n} \end{bmatrix},$$
 where $G_t^w$ is the $n\times n$ submatrix of $\widetilde G_t^w$.  We call $\widetilde G_t^w$ the {\em extended $G$-matrix} and $G_t^w$ the {\em $G$-matrix} of vertex $t$ with respect to $w$. It is easy to see that if $t=w$, then $\widetilde G_t^w=I_m$.

 Let $\overleftarrow{\mu}$ be the mutation sequence corresponding to the unique path from vertex $w$ to $t$ in $\mathbb T_n$.
 We apply the mutation sequence $\overleftarrow{\mu}$ to $\begin{pmatrix} B_w\\ I_{n}\end{pmatrix}$, where $B_w$ is the principal part of $\widetilde B_w$. The
 resulting matrix $\overleftarrow{\mu}\begin{pmatrix} B_w\\ I_{n}\end{pmatrix}$ takes the form $\begin{pmatrix} B_t\\  C_t^w\end{pmatrix}$. We call the $n\times n$ matrix $C_t^w$ the {\em  $C$-matrix} of the vertex $t$ with respect to $w$.

\begin{theorem}[{\cite{GHKK18}, \cite[(3.11)]{NZ12}}]
\label{thm:CG}
Let $t,w$ be two vertices in $\mathbb T_n$. The following statements hold.
\begin{itemize}
\item[(i)]  $\det(\widetilde G_t^w)=\pm 1=\det(G_t^w)$ and $\det(C_t^w)=\pm 1$.
    \item [(ii)] Each column of $C_t^w$ is either a non-negative vector or a non-positive vector.

    \item [(iii)]  $SC_t^wS^{-1}(G_t^w)^{\rm T}=I_n$, where $S$ is any skew-symmetrizer of $B_{w}$.
\end{itemize}
\end{theorem}

\begin{lemma}[{\cite{qin_2019}*{Lem. 3.3.6, Prop. 3.3.10}}]

\label{lem:deg}
Let $\mathcal U$ be a full rank upper cluster algebra and $t, w$ two vertices of $\mathbb T_n$.  The following statements hold.
\begin{itemize}
    \item [(i)] Any Laurent monomial
${\bf x}_t^{\bf h}$ is pointed  for the seed $({\bf x}_w,\widetilde B_w)$ and we have  $$\de^{w}({\bf x}_t^{\bf h})=\widetilde G_t^w{\bf h}=\de^{w}({\bf x}_w^{\widetilde G_t^w{\bf h}}).$$
\item [(ii)] 
Let $\widehat {\bf y}_t$ and $\widehat {\bf y}_w$ be the collections of $\hat y$-variables at vertices $t$ and $w$. Then for any ${\bf v}\in \mathbb Z^n$, the Laurent monomial $\widehat{\bf y}_t^{\bf v}={\bf x}_t^{\widetilde B_t{\bf v}}$ is pointed for the seed $({\bf x}_w,\widetilde B_w)$  and
  $$\de^{w}(\widehat{\bf y}_t^{\bf v})=\widetilde G_t^w\widetilde B_t{\bf v}= \widetilde B_wC_t^w{\bf v}=\de^{w}(\widehat {\bf y}_w^{C_t^w{\bf v}}).$$
\end{itemize}
\end{lemma}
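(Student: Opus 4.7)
The plan is to obtain part (i) directly from the multiplicativity of pointedness and the inversion property in Lemma \ref{lem:qin1}, and then deduce part (ii) from (i) together with the matrix identity $\widetilde G_t^w\widetilde B_t=\widetilde B_wC_t^w$, which I would prove by induction on the distance $d(w,t)$ in $\mathbb T_n$.

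For part (i), the very definition of $\widetilde G_t^w$ says that each cluster variable $x_{j;t}$ is pointed for the seed $({\bf x}_w,\widetilde B_w)$ with $\de^w(x_{j;t})$ equal to the $j$th column of $\widetilde G_t^w$. Writing ${\bf h}=(h_1,\ldots,h_m)^T$ and ${\bf x}_t^{\bf h}=\prod_j x_{j;t}^{h_j}$, I would apply Lemma \ref{lem:qin1}(i) iteratively for the positive exponents and Lemma \ref{lem:qin1}(ii) for any negative exponents. This yields that ${\bf x}_t^{\bf h}$ is pointed for $({\bf x}_w,\widetilde B_w)$ with degree $\sum_j h_j\,\de^w(x_{j;t})=\widetilde G_t^w{\bf h}$. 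The second identity $\de^w({\bf x}_t^{\bf h})=\de^w({\bf x}_w^{\widetilde G_t^w{\bf h}})$ is then immediate, since any Laurent monomial in ${\bf x}_w$ is trivially pointed for $({\bf x}_w,\widetilde B_w)$ with its own exponent vector as degree.

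For part (ii), applying (i) to $\widehat{\bf y}_t^{\bf v}={\bf x}_t^{\widetilde B_t{\bf v}}$ shows that it is pointed for $({\bf x}_w,\widetilde B_w)$ with degree $\widetilde G_t^w\widetilde B_t{\bf v}$, and the same reasoning at $t=w$ applied to $\widehat{\bf y}_w^{C_t^w{\bf v}}={\bf x}_w^{\widetilde B_wC_t^w{\bf v}}$ gives degree $\widetilde B_wC_t^w{\bf v}$. The remaining content is therefore the matrix identity $\widetilde G_t^w\widetilde B_t=\widetilde B_wC_t^w$. I would prove this by induction on $d(w,t)$. The base case $t=w$ is immediate since $\widetilde G_w^w=I_m$ and $C_w^w=I_n$. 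For the inductive step along an edge $t\to t'$ labeled $k$, I would use $\widetilde B_{t'}=E_{k,\varepsilon}^{\widetilde B_t}\widetilde B_tF_{k,\varepsilon}^{\widetilde B_t}$ from \eqref{eqn:ebf} together with the analogous mutation rules $\widetilde G_{t'}^w=\widetilde G_t^wE_{k,\varepsilon}^{\widetilde B_t}$ and $C_{t'}^w=C_t^wF_{k,\varepsilon}^{\widetilde B_t}$ for a common sign $\varepsilon$, and combine these with the involution \eqref{eqn:ef2} and the inductive hypothesis to obtain
\[
\widetilde G_{t'}^w\widetilde B_{t'}=\widetilde G_t^wE_{k,\varepsilon}^{\widetilde B_t}E_{k,\varepsilon}^{\widetilde B_t}\widetilde B_tF_{k,\varepsilon}^{\widetilde B_t}=\widetilde G_t^w\widetilde B_tF_{k,\varepsilon}^{\widetilde B_t}=\widetilde B_wC_t^wF_{k,\varepsilon}^{\widetilde B_t}=\widetilde B_wC_{t'}^w.
\]

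The main obstacle is to justify the mutation rules for $\widetilde G_t^w$ and $C_t^w$ with a \emph{single} uniform sign $\varepsilon$. This reduces, via \eqref{eqn:x-mutation}, to deciding which of the two monomials $\prod_j x_{j;t}^{[b_{jk;t}]_+}$ and $\prod_j x_{j;t}^{[-b_{jk;t}]_+}$ dominates the other under the dominance order $\preceq_w$ when computing $\de^w(x_{k;t'})$; the choice is governed by the common sign of the entries in the $k$th column of $C_t^w$, so the argument relies essentially on the sign-coherence of $C$-matrices, a nontrivial theorem due to Gross--Hacking--Keel--Kontsevich \cite{GHKK18} in the skew-symmetrizable setting. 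Once sign coherence is invoked, the inductive step reduces to the routine matrix computation displayed above.
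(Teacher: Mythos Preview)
The paper does not give its own proof of this lemma: both parts are stated with explicit citations to Qin \cite{qin_2019} (Lemma 3.3.6 and Proposition 3.3.10) and no argument is supplied. So there is nothing to compare against at the level of strategy.

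Your reconstruction is essentially correct and follows the standard route. Part (i) is exactly as you say: Proposition~\ref{pro:ghkk} gives that each $x_{j;t}$ is pointed for $({\bf x}_w,\widetilde B_w)$ with degree the $j$th column of $\widetilde G_t^w$, and Lemma~\ref{lem:qin1} then propagates this to arbitrary Laurent monomials. For part (ii), reducing to the matrix identity $\widetilde G_t^w\widetilde B_t=\widetilde B_wC_t^w$ and proving it by induction via the $E$/$F$ formalism is the right idea, and your identification of sign-coherence (from \cite{GHKK18}) as the crucial input is on target.

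One small point worth making explicit in your write-up: the claim that the sign governing the $G$-matrix step equals the sign of the $k$th column of $C_t^w$ is not immediate from \eqref{eqn:x-mutation} alone. To see which of $\widetilde G_t^w[b_{\cdot k;t}]_+$ and $\widetilde G_t^w[-b_{\cdot k;t}]_+$ is $\preceq_w$-larger, you compute their difference $\widetilde G_t^w\widetilde B_t{\bf e}_k$, and it is precisely the \emph{inductive hypothesis} $\widetilde G_t^w\widetilde B_t=\widetilde B_wC_t^w$ that identifies this with $\widetilde B_w c_k$, after which sign-coherence of $c_k$ decides the order. So the induction must simultaneously carry the identity and the $G$-matrix mutation rule; this is routine once noticed, but as written your final paragraph could be read as appealing to the $G$-mutation rule independently of the hypothesis. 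Spelling this out would close the loop.
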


Notice that any element in $R_w$ can be written as a finite $\mathbb Z$-linear combinations of pointed elements for seed $({\bf x}_w,\widetilde B_w)$. By the above lemma, any Laurent monomial ${\bf x}_t^{\bf h}$ in $\mathcal L(t) \coloneqq   \mathbb Z[x_{1;t}^{\pm 1},\ldots,x_{m;t}^{\pm 1}]$ is pointed for the seed $({\bf x}_w,\widetilde B_w)$. Hence, $\mathcal L(t)$ is contained in $R_w$.

\begin{corollary}\label{cor:deg}
Let $\mathcal U$ be a full rank upper cluster algebra and $t, w$ two vertices of $\mathbb T_n$. Let  $$u=\sum\limits_{{\bf h}\in \mathbb Z^m}b_{\bf h}{\bf x}_t^{\bf h}$$ be a Laurent polynomial in $\mathcal L(t)=\mathbb Z[x_{1;t}^{\pm 1},\ldots,x_{m;t}^{\pm 1}]$. Then the following statements hold.

\begin{itemize}
    \item [(i)]   $u$ is pointed for the seed $({\bf x}_w,\widetilde B_w)$ if and only if there exists a (unique) ${\bf g}\in\mathbb Z^m$ with $b_{\bf g}=1$ such that $\de^{w}({\bf x}_t^{\bf h}) \preceq_{w}\de^{w}({\bf x}_t^{\bf g})$ for any ${\bf h}\in\mathbb Z^m$ with $b_{\bf h}\neq 0$, equivalently, $$\widetilde G_t^w{\bf h}\preceq_{w} \widetilde G_t^w{\bf g}$$  for any ${\bf h}\in\mathbb Z^m$ with $b_{\bf h}\neq 0$.

     \item[(ii)] Suppose that $u$ is pointed for the seed $({\bf x}_t,\widetilde B_t)$, that is, $u$ can be written as
     $$u= {\bf x}_t^{\bf g}\cdot \sum\limits_{{\bf v}\in\mathbb N^{n}}c_{\bf v}\widehat{\bf y}_t^{\bf v},$$
     where $c_{\bf v}\in\mathbb Z$ and $c_0=1$. Then  $u$ is pointed for the new seed $({\bf x}_w,\widetilde B_w)$ if and only if
     there exists a (unique) ${\bf u}\in \mathbb N^{n}$ with $c_{\bf u}=1$ such that $\de^{w}(\widehat{\bf y}_t^{\bf v})\preceq_{w}\de^{w}(\widehat {\bf y}_t^{\bf u})$ for any ${\bf v}\in\mathbb N^{n}$ with $c_{\bf v}\neq 0$, equivalently,  $$C_t^w{\bf v}-C_t^w{\bf u}=C_t^w({\bf v}-{\bf u})\in\mathbb N^n$$ for any ${\bf v}\in\mathbb N^{n}$ with $c_{\bf v}\neq 0$.
\end{itemize}
\end{corollary}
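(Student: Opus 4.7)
The plan is to reduce both parts to Lemma~\ref{lem:deg}. The idea is to expand $u = \sum_{\bf h} b_{\bf h}{\bf x}_t^{\bf h}$ (respectively, $u = \sum_{\bf v} c_{\bf v}{\bf x}_t^{\bf g}\widehat{\bf y}_t^{\bf v}$ for part (ii)) as a finite $\mathbb Z$-linear combination of elements that are already pointed for the seed $({\bf x}_w,\widetilde B_w)$, with degrees prescribed by Lemma~\ref{lem:deg}, and to analyze when this sum remains pointed. The essential structural input is that the matrices $\widetilde G_t^w$ and $C_t^w$ are unimodular by Lemma~\ref{lem:det=1}, hence injective, so distinct indexing vectors give distinct leading ${\bf x}_w$-degrees.

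For the forward direction of (i), injectivity of $\widetilde G_t^w$ upgrades the hypothesis $\widetilde G_t^w{\bf h}\preceq_w\widetilde G_t^w{\bf g}$ (for ${\bf h}\neq{\bf g}$) to strict $\prec_w$, so that the leading monomial ${\bf x}_w^{\widetilde G_t^w{\bf g}}$, coming from the summand $b_{\bf g}{\bf x}_t^{\bf g}$ with coefficient $b_{\bf g}=1$, survives uncancelled as the unique maximal term. For the converse, set $S:=\{\widetilde G_t^w{\bf h}\mid b_{\bf h}\neq 0\}$. The key observation is that for any $\preceq_w$-maximal element $\widetilde G_t^w{\bf h}$ of $S$, no other summand $b_{{\bf h}'}{\bf x}_t^{{\bf h}'}$ can contribute to the monomial ${\bf x}_w^{\widetilde G_t^w{\bf h}}$: such a contribution would force $\widetilde G_t^w{\bf h}\preceq_w\widetilde G_t^w{\bf h}'\in S$, which by maximality and injectivity implies ${\bf h}={\bf h}'$. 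Hence $\widetilde G_t^w{\bf h}$ appears in the ${\bf x}_w$-support of $u$ with coefficient exactly $b_{\bf h}\neq 0$, and the same injectivity argument (this time using that $\widetilde B_w$ has full rank, hence is injective) shows that $\widetilde G_t^w{\bf h}$ is still maximal in the ${\bf x}_w$-support of $u$. Since $u$ is pointed, its support has a unique maximum; hence $S$ has a unique $\preceq_w$-maximal element, which, as a finite poset admits a maximum, is in fact the maximum of $S$. This yields the required ${\bf g}$, with $b_{\bf g}=1$ forced by the leading coefficient condition and uniqueness of ${\bf g}$ coming again from injectivity of $\widetilde G_t^w$.

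Part (ii) is parallel. Writing $u=\sum_{{\bf v}\in\mathbb N^n}c_{\bf v}{\bf x}_t^{{\bf g}+\widetilde B_t{\bf v}}$, each summand is pointed for $({\bf x}_w,\widetilde B_w)$ with ${\bf x}_w$-degree $\widetilde G_t^w{\bf g}+\widetilde B_w C_t^w{\bf v}$ by Lemma~\ref{lem:deg}(ii). Using injectivity of the full-rank matrix $\widetilde B_w$, the comparison $\de^w(\widehat{\bf y}_t^{\bf v})\preceq_w\de^w(\widehat{\bf y}_t^{\bf u})$ unwinds to precisely $C_t^w({\bf v}-{\bf u})\in\mathbb N^n$. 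Replaying the argument of~(i), now with $C_t^w$ playing the role of $\widetilde G_t^w$ (both unimodular by Lemma~\ref{lem:det=1}), gives the claim.

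The main obstacle lies in the converse direction: one must rule out the possibility that leading monomials ${\bf x}_w^{\widetilde G_t^w{\bf h}}$ of different summands cancel against lower-order contributions from other summands in the ${\bf x}_w$-expansion of $u$. The injectivity of $\widetilde G_t^w$ (respectively $C_t^w$), together with the finiteness of $S$, is what rules this out and lets a ``unique $\preceq_w$-maximal'' element of a finite subposet of $\mathbb Z^m$ be promoted to an actual maximum.
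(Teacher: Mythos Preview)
Your proposal is correct and follows essentially the same approach as the paper: both rely on Lemma~\ref{lem:deg} to identify $\de^w({\bf x}_t^{\bf h})=\widetilde G_t^w{\bf h}$ and $\de^w(\widehat{\bf y}_t^{\bf v})=\widetilde B_wC_t^w{\bf v}$, and on Lemma~\ref{lem:det=1} (plus full rank of $\widetilde B_w$) to ensure these assignments are injective, so that leading monomials cannot collide. The paper's own proof is a two-line sketch invoking exactly these ingredients; your write-up simply spells out the poset argument (maximal elements of $S$ survive in the support of $u$, and a finite poset with a unique maximal element has a maximum) that the paper leaves implicit.
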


\begin{proof}
Thanks to Theorem \ref{thm:CG} (i) and by Lemma \ref{lem:deg}, we know that $\de^{w}({\bf x}_t^{\bf h})=\de^{w}({\bf x}_t^{{\bf h}'})$ if and only if ${\bf h}={\bf h}'$. By the similar argument and since $\widetilde B_w$ has full rank, we know that $\de^{w}(\widehat{\bf y}_t^{\bf v})=\de^{w}(\widehat{\bf y}_t^{{\bf v}'})$ if and only if ${\bf v}={\bf v}'$.

Based on the above observations, the results in (i) and (ii) follow from the definition of pointed elements and Lemma \ref{lem:deg}.
\end{proof}

Let $(\mathcal S_X,\mathcal S_Y)$ be a Langlands dual pair and $\mathcal U$ the upper cluster algebra associated to the cluster pattern $\mathcal S_X$. As before, we assume that $\mathcal U$ is of full rank.

\begin{corollary} \label{cor:deg2}
Keep the above setting.  Let
$[{\bf g}]=\{{\bf g}^t\in\mathbb Z^m\mid t\in\mathbb T_n\}$ be a tropical point  in $\mathcal S_Y(\mathbb Z^{\rm max})$, and write ${\bf g}^t=(g_{1}^t,\ldots,g_{m}^t)^T$ for $t\in\mathbb T_n$.  Let
\begin{xy}(0,1)*+{w}="A",(10,1)*+{s}="B",\ar@{-}^k"A";"B" \end{xy} be an edge in $\TT_n$.  The following statements hold.

\begin{itemize}
\item[(i)] $\de^{w}({\bf x}_s^{{\bf g}^s}\cdot \widehat{\bf y}_s^{[-g_{k}^s]_+{\bf e}_k})={\bf g}^w$,  where ${\bf e}_k$ is the $k$th column of $I_n$.
    \item[(ii)] Suppose that $u\in\mathcal U$ is a $[{\bf g}]$-pointed element. Let $u= {\bf x}_s^{{\bf g}^s}\cdot \sum\limits_{{\bf v}\in\mathbb N^{n}}c_{\bf v}\widehat{\bf y}_s^{\bf v}$ be the Laurent expansion of $u$ with respect to seed $({\bf x}_s,\widetilde B_s)$. Then
        the coefficient of $\widehat {\bf y}_s^{[-g_{k}^s]_+{\bf e}_k}$ in $\sum\limits_{{\bf v}\in\mathbb N^{n}}c_{\bf v}\widehat{\bf y}_s^{\bf v}$ is $1$ and for any integer $d$ with $d>[-g_{k}^s]_+$, the coefficient of $\widehat {\bf y}_s^{d{\bf e}_k}$ in $\sum\limits_{{\bf v}\in\mathbb N^{n}}c_{\bf v}\widehat{\bf y}_s^{\bf v}$ is zero.
\end{itemize}

\end{corollary}

\begin{proof}
(i) Let $\widetilde G_s^w$ and  $C_s^w$ be the extended $G$-matrix and  $C$-matrix of vertex $s$ with respect to $w$.
By Lemma \ref{lem:qin1} and Lemma \ref{lem:deg}, we have
$$\de^{w}({\bf x}_s^{{\bf g}^s}\cdot \widehat{\bf y}_s^{[-g_{k}^s]_+{\bf e}_k})=\widetilde G_s^w{\bf g}^s+\widetilde B_{w}C_s^w\cdot ([-g_{k}^s]_+{\bf e}_k).$$
Since the two vertices $w$ and $s$ are jointed in $\mathbb T_n$ by an edge labeled by $k$,
 we know that $\widetilde G_s^w=(g_{ij;s})$ and $C_s^w=(c_{ij;s})$ are given as follows:
\begin{align}
g_{ij;s}&=\begin{cases}-1,&i=j=k;\\
[-b_{ik;w}]_+=[b_{ik;s}]_+,&j=k,\;i\neq k;\\
\delta_{i,j},&\text{else},\end{cases}\nonumber\\
c_{ij;s}&=\begin{cases}-1,&i=j=k;\\
[b_{kj;w}]_+=[-b_{kj;s}]_+,&i=k,\;j\neq k;\\
\delta_{i, j},&\text{else}. \end{cases}\nonumber
\end{align}
Because the $k$th column of $C_s^w$ equals $-{\bf e}_k$,  we have
$$\widetilde G_s^w{\bf g}^s+\widetilde B_{w}C_s^w\cdot ([-g_{k}^s]_+{\bf e}_k)=\widetilde G_s^w{\bf g}^s-\widetilde B_{w}\cdot ([-g_{k}^s]_+{\bf e}_k).$$
Now we look at the $i$th component $g_i^\prime$ of the vector  $${\bf g}^\prime \coloneqq   \widetilde G_s^w{\bf g}^s-\widetilde B_{w}\cdot ([-g_{k}^s]_+{\bf e}_k).$$
We have
$$g_i^\prime=\begin{cases}-g_{k}^s-0=-g_{k}^s,& i=k;\\
(g_{i}^s+[b_{ik;s}]_+g_{k}^s)-b_{ik;w}[-g_{k}^s]_+=g_{i}^s+[b_{ik;s}]_+g_{k}^s+b_{ik;s}[-g_{k}^s]_+,&i\neq k.\end{cases}$$
Since $\widehat B_s=-\widetilde B_s^T$, we can rewrite the above equality as follows:
$$g_i^\prime=\begin{cases}-g_{k}^s,& i=k;\\
g_{i}^s+[-\hat b_{ki;s}]_+g_{k}^s-\hat b_{ki;s}[-g_{k}^s]_+,&i\neq k.\end{cases}$$
Using the equality $[b]_+g-[-b]_+g=bg=b[g]_+-b[-g]_+$ for any $b,g\in\mathbb R$, one can see the above equality is the same as
$$g_i^\prime=\begin{cases}-g_{k}^s,& i=k;\\
g_{i}^s+[\hat b_{ki;s}]_+g_{k}^s-\hat b_{ki;s}[g_{k}^s]_+,&i\neq k.\end{cases}$$
 Then by \eqref{eqn:y-trop}, we get ${\bf g}^w={\bf g}'=\widetilde G_s^w{\bf g}^s-\widetilde B_{w}\cdot ([-g_{k}^s]_+{\bf e}_k)$. Hence,
$\de^{w}({\bf x}_s^{{\bf g}^s}\cdot \widehat{\bf y}_s^{[-g_{k}^s]_+{\bf e}_k})={\bf g}^w$.

(ii) Since $u$ is $[{\bf g}]$-pointed, we know that it is pointed for the seed $({\bf x}_w,\widetilde B_w)$.
By Corollary \ref{cor:deg} (ii),  there exists a unique ${\bf u}\in\mathbb N^n$ with $c_{\bf u}=1$ such that $$\de^{w}({\bf x}_s^{{\bf g}^s}\cdot \widehat{\bf y}_s^{\bf v})\preceq_{w}\de^{w}({\bf x}_s^{{\bf g}^s}\cdot \widehat {\bf y}_s^{\bf u})=\de^{w}(u)={\bf g}^w$$ for any ${\bf v}\in\mathbb N^{n}$ with $c_{\bf v}\neq 0$. By the result in (i), we know that ${\bf u}=[-g_{k}^s]_+{\bf e}_k$.

Notice that $\de^{w}({\bf x}_s^{{\bf g}^s}\cdot \widehat{\bf y}_s^{\bf v})\preceq_{w}\de^{w}({\bf x}_s^{{\bf g}^s}\cdot \widehat {\bf y}_s^{\bf u})$ holds if and only if the following relation holds.
$$C_s^w{\bf v}-C_s^w{\bf u}=C_s^w({\bf v}-{\bf u})\in\mathbb N^n.$$
Since the $k$th column of $C_s^w$ is $-{\bf e}_k$, we obtain that $C_s^w(d{\bf e}_k-[-g_{k}^s]_+{\bf e}_k)$ belongs to $\mathbb N^n$ if and only if $d\leq [-g_{k}^s]_+$. Hence, for any integer $d$ with $d>[-g_{k}^s]_+$, the coefficient of $\widehat {\bf y}_s^{d{\bf e}_k}$ in $\sum\limits_{{\bf v}\in\mathbb N^{n}}c_{\bf v}\widehat{\bf y}_s^{\bf v}$ is zero.
\end{proof}

\begin{corollary}\label{cor:f-vector}
Let
$u$ be a $[{\bf g}]$-pointed element in $\mathcal U$ and let ${\bf f}_u^t=(f_{1;u}^t,\ldots,f_{n;u}^t)^T$ be the $f$-vector of $u$ with respect to vertex $t\in\mathbb T_n$. The following statements hold.
\begin{itemize}
\item[(i)] If $f_{k;u}^t=0$ for some $k\in[1,n]$, then the $k$th component $g_{k}^t$ of ${\bf g}_u^t=\de^t(u)$ is non-negative;

    \item [(ii)] If $f_{i;u}^t=0$ for any $i\in[1,n]$, then $u$ is a cluster monomial in ${\bf x}_t$;
    \item[(iii)] Let $k$ be an integer in $[1,n]$ and $({\bf x}_{t'},\widetilde B_{t'})=\mu_k({\bf x}_t,\widetilde B_t)$. Suppose that
    $f_{k;u}^t\neq 0$ but $f_{i;u}^t=0$ for any $i\in[1,n]\backslash\{k\}$.  Then $u$ is a cluster monomial in ${\bf x}_{t'}$.
\end{itemize}

\end{corollary}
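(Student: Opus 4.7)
For (i), apply Corollary~\ref{cor:deg2} at vertex $t$ in direction $k$ (i.e.\ with $s=t$ and $w=\mu_k(t)$): the coefficient of the pure power $\widehat{\bf y}_t^{[-g_{k;t}]_+{\bf e}_k}$ in the Laurent expansion of $u$ at $({\bf x}_t,\widetilde B_t)$ is $1$. Equivalently, the monomial $y_k^{[-g_{k;t}]_+}$ appears in $F_t$ with coefficient $1$, so $f_{k;t}\geq [-g_{k;t}]_+$. If $g_{k;t}<0$ this forces $f_{k;t}\geq -g_{k;t}>0$; the contrapositive is (i). Part (ii) is then immediate: under the hypothesis, the constant-term-$1$ normalization forces $F_t\equiv 1$, so $u={\bf x}_t^{{\bf g}_t}$, and (i) yields $g_{i;t}\geq 0$ for $i\in[1,n]$, making $u$ a cluster monomial in ${\bf x}_t$.

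For (iii), set $f:=f_{k;t}$ and $t':=\mu_k(t)$. By hypothesis $F_t$ is a univariate polynomial in $y_k$, so every monomial of $F_t$ is a pure power of $y_k$, and Corollary~\ref{cor:deg2}(ii) combined with the lower bound from (i) forces $f=[-g_{k;t}]_+$; in particular $g_{k;t}=-f<0$. Write $F_t(y)=\sum_{d=0}^{f}c_dy^d$ with $c_0=c_f=1$, and introduce the monic polynomial $Q_t(y):=y^fF_t(y^{-1})$, which has degree $f$ and constant term $1$. Using the mutation identities $\hat y_{k;t}=\hat y_{k;t'}^{-1}$ and $x_{k;t}^{-1}=x_{k;t'}\,\hat y_{k;t'}(1+\hat y_{k;t'})^{-1}\prod_{j}x_{j;t'}^{-[-b_{jk;t}]_+}$, a routine computation (in which the factors $\hat y_{k;t'}^{\pm f}$ telescope) yields
\[
u \;=\; {\bf x}_{t'}^{{\bf g}_{t'}}\cdot(1+\hat y_{k;t'})^{-f}\,Q_t(\hat y_{k;t'}),
\]
where the Laurent monomial prefactor is identified with ${\bf x}_{t'}^{{\bf g}_{t'}}$ using the tropical $y$-mutation rule, the relation $\widehat B_t=-\widetilde B_t^{T}$, and $[g_{k;t}]_+=0$.

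Comparing with the $[{\bf g}]$-pointed expansion $u={\bf x}_{t'}^{{\bf g}_{t'}}F_{t'}(\hat y_{1;t'},\ldots,\hat y_{n;t'})$ and invoking the algebraic independence of the $\hat y_{i;t'}$ (which holds because $\widetilde B_{t'}$ has full rank $n$), we obtain $F_{t'}(y_1,\ldots,y_n)=(1+y_k)^{-f}Q_t(y_k)$ as rational functions. Polynomiality of $F_{t'}$ forces $(1+y)^f\mid Q_t(y)$; since $Q_t(y)$ and $(1+y)^f$ are both monic of degree $f$, we conclude $Q_t(y)=(1+y)^f$, hence $F_{t'}=1$ and $u={\bf x}_{t'}^{{\bf g}_{t'}}$. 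Part (i) applied at $t'$ then gives $g_{i;t'}\geq 0$ for $i\in[1,n]$, so $u$ is a cluster monomial in ${\bf x}_{t'}$.

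The main obstacle is the explicit computation expressing $u$ in the adjacent seed: one must carefully combine the correct factorization of $x_{k;t}$ with the reciprocation $\hat y_{k;t}=\hat y_{k;t'}^{-1}$ so that the Laurent-monomial prefactor matches ${\bf x}_{t'}^{{\bf g}_{t'}}$ on the nose, and the remaining tail collapses to $(1+y)^{-f}Q_t(y)$. Once that identification is made, the monicity/divisibility observation together with part (i) finish the proof.
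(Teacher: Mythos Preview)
Your argument is correct and follows essentially the same route as the paper. For (i) and (ii) the proofs coincide verbatim. For (iii), both you and the paper reduce to the univariate situation, use Corollary~\ref{cor:deg2}(ii) to pin down $f_{k;t}=[-g_{k;t}]_+=-g_{k;t}$ with leading coefficient $1$, substitute the exchange relation, and then deduce $(1+y)^f\mid F_t$ (equivalently $(1+y)^f\mid Q_t$). The only cosmetic difference is that the paper phrases the divisibility step as ``$u\in\mathbb Z[x_{1;t'}^{\pm1},\ldots,x_{m;t'}^{\pm1}]$'' (Laurent phenomenon), whereas you invoke the polynomiality of $F_{t'}$ coming from $[\mathbf g]$-pointedness; your explicit identification of the prefactor with ${\bf x}_{t'}^{{\bf g}_{t'}}$ via the tropical mutation rule is a detail the paper leaves implicit.
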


\begin{proof}
 Since $u$ is $[{\bf g}]$-pointed in $\mathcal U$, the Laurent expansion of $u$ with respect to ${\bf x}_t$ has a canonical expression:
    $$u={\bf x}_t^{{\bf g}_u^t}F_u^t(\hat y_{1;t},\ldots,\hat y_{n;t}),$$
    where ${\bf g}_u^t=(g_1^t,\ldots,g_m^t)^T$ is the value of $[{\bf g}]$ at vertex $t$ and $F_u^t$ is a polynomial in $\mathbb Z[y_1,\ldots,y_n]$ with constant term $1$.

(i) By Corollary \ref{cor:deg2} (ii), we know that $y_k^{[-g_{k}^t]_+}$ must be a monomial appearing in $F_u^t\in\mathbb Z[y_1,\ldots,y_n]$. Recall that $f_{k;u}^t$ is the maximal degree of $y_k$ in $F_u^t$. So we have $f_{k;u}^t\geq [-g_{k}^t]_+\geq 0$. Hence, if $f_{k;u}^t=0$, then $[-g_{k}^t]_+=0$. So $g_{k}^t$ is non-negative.

(ii)  Since $f_{i;u}^t=0$ for any $i\in[1,n]$, we get $F_u^t=1$ and $u={\bf x}_t^{{\bf g}_u^t}$.  On the other hand, by (i), we have $g_{i}^t\geq 0$ for any $i\in[1,n]$. Hence, $u={\bf x}_t^{{\bf g}_u^t}$ is a cluster monomial in ${\bf x}_t$.

    (iii) Since $f_{i;u}^t=0$ holds for any $i\in[1,n]\setminus \{k\}$, we get that $F_u^t$ is a polynomial in the single variable $y_k$. In this case, Corollary \ref{cor:deg2} (ii) implies $f_{k;u}^t=[-g_{k}^t]_+$. Since $f_{k;u}^t\neq 0$, we have $g_{k}^t<0$ and thus $f_{k;u}^t=-g_{k}^t$. Thus $u$ has the following form:
    $$u=x_{1;t}^{g_{1}^t}\cdots x_{m;t}^{g_{m}^t}(1+\sum_{0<d<-g_{k}^t}c_d\hat y_{k;t}^d+\hat y_{k;t}^{-g_{k}^t}),$$
    where $c_d\in\mathbb Z$. By the exchange relations, we have $x_{j;t'}=x_{j;t}$ for any $j\neq k$ and $x_{k;t}\cdot x_{k;t'}=M\cdot(1+\hat y_{k;t})$, where $M=\prod_{j=1}^mx_{j;t}^{[-b_{jk;t}]_+}$.
    By substituting these relations into the above equality and using the fact $u\in\mathbb Z[x_{1;t'}^{\pm 1},\ldots,x_{m;t'}^{\pm 1}]$, we obtain that  $$(1+\sum_{0<d<-g_{k}^t}c_d\hat y_{k;t}^d+\hat y_{k;t}^{-g_{k}^t})=(1+\hat y_{k;t})^{-g_{k}^t}$$ and the Laurent expansion of $u$ with respect to seed $({\bf x}_{t'},\widetilde B_{t'})$ is a Laurent monomial in $\mathbb Z[x_{1;t'}^{\pm 1},\ldots,x_{m;t'}^{\pm 1}]$. Thus the $F$-polynomial $F_u^{t'}$ of $u$ with respect to vertex $t'$ is $1$. In particular, the $f$-vector ${\bf f}_u^{t'}$ of $u$ with respect to vertex $t'$ is zero. By (ii), we know that $u$ is a cluster monomial in ${\bf x}_{t'}$.
\end{proof}

\section{Tropical invariant and $F$-invariant in cluster algebras}\label{sec4}

\subsection{Semifield homomorphisms arising from good elements}\label{sec41}
Let $(\mathcal S_X,\mathcal S_Y, \mathsf{\Lambda})$ be a Langlands-Poisson triple. In this subsection, we give a map from the tropical points in $\mathcal S_Y(\mathbb Z^{\rm max})$ to those in $\mathcal S_X(\mathbb Z^{\rm max})$. Thanks to this map, for each good element in a $\mathsf{\Lambda}$-upper cluster algebra, we introduce a semifield homomorphism, which will be used to define tropical invariant and $F$-invariant.

\begin{lemma}\label{lem:g-mutation}
Let $(\mathcal S_X,\mathcal S_Y)$ be a Langlands dual pair and  $\{{\bf g}^t\in\mathbb Z^m\mid t\in\mathbb T_n\}$ a collection of vectors in $\mathbb Z^m$ indexed by the vertices in $\mathbb T_n$. Then the collection  $\{{\bf g}^t\in\mathbb Z^m\mid t\in\mathbb T_n\}$ is a tropical point in
 $\mathcal S_Y(\mathbb Z^{\rm max})$ if and only if for any edge \begin{xy}(0,1)*+{t}="A",(10,1)*+{t'}="B",\ar@{-}^k"A";"B" \end{xy} in $\TT_n$, one has ${\bf g}^{t'}=E_{k,\varepsilon(g_{k}^t)}^{\widetilde B_t}{\bf g}^t$,
 where
\[\varepsilon(g_{k}^t)=\begin{cases}
   -,&\text{if }g_{k}^t\geq 0;\\
    +,&\text{if }g_{k}^t<0,
\end{cases} \]
and
$E_{k,\pm}^{\widetilde B_t}$ is as defined in \eqref{eqn:ebf}.
\end{lemma}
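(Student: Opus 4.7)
The plan is to verify directly that the defining mutation rule \eqref{eqn:y-trop} for a tropical point in $\mathcal S_Y(\mathbb Z^T)$ coincides, componentwise, with the matrix product $E_{k,\varepsilon(g_{k;t})}^{\widetilde B_t}{\bf g}_t$ for the stated sign choice. Since the condition is imposed edge-by-edge, it suffices to fix an edge \begin{xy}(0,1)*+{t}="A",(10,1)*+{t'}="B",\ar@{-}^k"A";"B" \end{xy} of $\mathbb T_n$ and check the equivalence of the two formulas for that edge.

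First I would translate the $Y$-side data to the $X$-side data using the Langlands dual pair assumption. By the corollary following Definition \ref{def:dual-pair}, we have $\widehat B_t=-\widetilde B_t^T$ for every vertex, hence $\hat b_{ki;t}=-b_{ik;t}$. Substituting this into \eqref{eqn:y-trop} gives, for $i\neq k$,
\[
g_{i;t'}=g_{i;t}+[-b_{ik;t}]_+\,g_{k;t}+b_{ik;t}\,[g_{k;t}]_+,
\]
while $g_{k;t'}=-g_{k;t}$.

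Next I would compute $E_{k,\varepsilon}^{\widetilde B_t}{\bf g}_t$ directly from the definition in \eqref{eqn:ebf}. Since $E_{k,\varepsilon}^{\widetilde B_t}$ differs from $I_m$ only in its $k$th column, the row-$k$ entry of the product is $-g_{k;t}$, which matches $g_{k;t'}$ for both signs $\varepsilon\in\{\pm\}$. For $i\neq k$ the product gives $g_{i;t}+[-\varepsilon b_{ik;t}]_+\,g_{k;t}$. Hence the proposed matrix identity reduces to the scalar claim
\[
[-b_{ik;t}]_+\,g_{k;t}+b_{ik;t}\,[g_{k;t}]_+ = [-\varepsilon(g_{k;t})\,b_{ik;t}]_+\,g_{k;t}
\]
for every $i\neq k$.

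Finally I would verify this scalar identity by a two-case sign analysis on $g_{k;t}$, which is the only real computation and the natural place for the case split to arise. If $g_{k;t}\geq 0$, then $[g_{k;t}]_+=g_{k;t}$, so the left side equals $([-b_{ik;t}]_+ + b_{ik;t})\,g_{k;t}=[b_{ik;t}]_+\,g_{k;t}$ using $[a]_+-[-a]_+=a$, which matches the right side for $\varepsilon=-$. If $g_{k;t}<0$, then $[g_{k;t}]_+=0$, the left side is $[-b_{ik;t}]_+\,g_{k;t}$, and this matches the right side for $\varepsilon=+$. The two cases exactly reproduce the definition of $\varepsilon(g_{k;t})$, completing the verification in both directions: the matrix identity implies \eqref{eqn:y-trop} and conversely. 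The only mildly subtle point, and the main place to be careful, is the bookkeeping between the $\widehat B$-entries and the $\widetilde B$-entries and getting the sign convention for $\varepsilon(g_{k;t})$ on the correct side of zero; once \eqref{eqn:y-trop} is rewritten via $\widehat B_t=-\widetilde B_t^T$ the computation is routine.
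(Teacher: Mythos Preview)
Your proposal is correct and follows essentially the same approach as the paper: both rewrite \eqref{eqn:y-trop} via $\widehat B_t=-\widetilde B_t^T$ to obtain $g_{i;t'}=g_{i;t}+[-b_{ik;t}]_+g_{k;t}+b_{ik;t}[g_{k;t}]_+$ for $i\neq k$, and then split on the sign of $g_{k;t}$ to match the components of $E_{k,\varepsilon(g_{k;t})}^{\widetilde B_t}{\bf g}_t$. Your presentation is slightly more explicit in isolating the scalar identity and computing the matrix product, but the argument is the same.
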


\begin{proof}
By definition,  the collection  $\{{\bf g}^t\in\mathbb Z^m\mid t\in\mathbb T_n\}$ is a tropical point in
 $\mathcal S_Y(\mathbb Z^{\rm max})$ if and only if the relations in \eqref{eqn:y-trop} hold. Since $(\mathcal S_X,\mathcal S_Y)$ is a Langlands dual pair, we have $\widehat B_t=-\widetilde B_t^T$. Thus $\hat b_{ij;t}=-b_{ji;t}$ for any $i\in[1,n]$ and $j\in[1,m]$. Then the relations in \eqref{eqn:y-trop} can be rewritten as
\begin{align}
    g_{i}^{t'}&=\begin{cases}-g_{k}^t,& \text{if}\;i=k;\\ g_{i}^t+[-b_{ik;t}]_+g_{k}^t+b_{ik;t}[g_{k}^t]_+,&\text{if}\;i\neq k. \end{cases} \nonumber\\
    &=\begin{cases}-g_{k}^t,& \text{if}\;i=k;\\ g_{i}^t+[b_{ik;t}]_+g_{k}^t,&\text{if}\;i\neq k\;\;\text{and }\;g_{k}^t\geq 0;\\
    g_{i}^t+[-b_{ik;t}]_+g_{k}^t,&\text{if}\;i\neq k\;\;\text{and }\;g_{k}^t< 0. 
    \end{cases}\nonumber
\end{align}
This is exactly the same with  ${\bf g}^{t'}=E_{k,\varepsilon(g_{k}^t)}^{\widetilde B_t}{\bf g}^t$.
\end{proof}

\begin{lemma}\label{lem:a-mutation}
 Let $\mathcal S_X$ be a cluster pattern of rank $n$ in $\mathbb F$ and  $\{{\bf a}^t\in\mathbb Z^m\mid t\in\mathbb T_n\}$ a
 collection of vectors in $\mathbb Z^m$ indexed by the vertices of $\mathbb T_n$. Then the collection  $\{{\bf a}^t\in\mathbb Z^m\mid t\in\mathbb T_n\}$ is a
 tropical point in  $\mathcal S_X(\mathbb Z^{\rm max})$ if and only if  for any edge \begin{xy}(0,1)*+{t}="A",(10,1)*+{t'}="B",\ar@{-}^k"A";"B" \end{xy} in $\TT_n$, one has ${\bf a}^{t'}= (E_{k,\varepsilon_0}^{\widetilde B_t})^T{\bf a}^t$, where
\[\varepsilon_0=\begin{cases}
   -,&\text{if }\sum_{j=1}^mb_{jk;t}a_{j}^t\geq 0;\\
    +,&\text{if }\sum_{j=1}^mb_{jk;t}a_{j}^t<0,
\end{cases} \]
and
$E_{k,\pm}^{\widetilde B_t}$ is as defined in \eqref{eqn:ebf}.
\end{lemma}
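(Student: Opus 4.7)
The plan is to directly unpack both sides of the equivalence and match coefficients, exactly as was done in the proof of Lemma \ref{lem:g-mutation}, but now using the matrix $E_{k,\varepsilon}^{\widetilde B_t}$ on the ${\bf a}$-side (the $Y$-pattern case used it too, but here we transpose). The work is essentially bookkeeping around the definition of $E_{k,\varepsilon}^{\widetilde B}$ from \eqref{eqn:ebf} and the identity $[b]_+-[-b]_+=b$.

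First I would write out $(E_{k,\varepsilon}^{\widetilde B_t})^T$ explicitly: its $i$th row is the standard basis row ${\bf e}_i^T$ when $i\neq k$, while its $k$th row equals $(e_1,\ldots,e_m)$ with $e_k=-1$ and $e_j=[-\varepsilon b_{jk;t}]_+$ for $j\neq k$. Applying this to ${\bf a}_t$ therefore gives, for any $\varepsilon\in\{\pm\}$, the identity for $i\neq k$ that the $i$th component of $(E_{k,\varepsilon}^{\widetilde B_t})^T{\bf a}_t$ equals $a_{i;t}$, which already matches the second branch of \eqref{eqn:x-trop}. The only nontrivial case is the $k$th component, which is
\[
\bigl((E_{k,\varepsilon}^{\widetilde B_t})^T{\bf a}_t\bigr)_k \;=\; -a_{k;t} + \sum_{j\neq k}[-\varepsilon b_{jk;t}]_+ a_{j;t}.
\]

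Next I would compare this with the first branch of \eqref{eqn:x-trop}. Using $b_{kk;t}=0$ (so the $j=k$ terms vanish in both summations $A_{\pm}:=\sum_{j=1}^m[\pm b_{jk;t}]_+a_{j;t}$), the above quantity equals $-a_{k;t}+A_-$ when $\varepsilon=+$ and $-a_{k;t}+A_+$ when $\varepsilon=-$, while the tropical mutation rule \eqref{eqn:x-trop} demands $-a_{k;t}+\max(A_+,A_-)$. Thus the identity ${\bf a}_{t'}=(E_{k,\varepsilon_0}^{\widetilde B_t})^T{\bf a}_t$ holds with the correct $\varepsilon_0$ precisely when $\varepsilon_0=-$ iff $A_+\geq A_-$. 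Applying $[b]_+-[-b]_+=b$ termwise gives $A_+-A_-=\sum_{j=1}^m b_{jk;t}a_{j;t}$, so the condition $A_+\geq A_-$ becomes $\sum_{j=1}^m b_{jk;t}a_{j;t}\geq 0$, which is exactly the prescription for $\varepsilon_0$ in the statement.

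Putting these two observations together establishes the equivalence edge-by-edge, proving the ``if and only if''. I expect no real obstacles beyond keeping signs consistent between the ``$+$''/``$-$'' labels on $E_{k,\varepsilon}^{\widetilde B}$ and the sign of $\sum b_{jk;t}a_{j;t}$; the flip between $\varepsilon$ and the sign of this sum is the only point where a careless reader could get confused, so I would state the correspondence $\varepsilon_0=-\operatorname{sign}\bigl(\sum_j b_{jk;t}a_{j;t}\bigr)$ (with the sign being $-$ at zero) explicitly, mirroring the analogous subtlety handled in Lemma \ref{lem:g-mutation}.
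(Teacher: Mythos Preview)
Your proposal is correct and follows essentially the same approach as the paper's proof: both simply unwind the definition of $(E_{k,\varepsilon}^{\widetilde B_t})^T$ componentwise and match the resulting piecewise formula against \eqref{eqn:x-trop}. Your version is more detailed---you explicitly compute $A_+-A_-=\sum_j b_{jk;t}a_{j;t}$ via $[b]_+-[-b]_+=b$ and carefully track the $\varepsilon\leftrightarrow$sign correspondence---whereas the paper compresses all of this into a single displayed rewriting of \eqref{eqn:x-trop} followed by ``this is exactly the same.''
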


\begin{proof}
By definition, the collection  $\{{\bf a}^t\in\mathbb Z^m\mid t\in\mathbb T_n\}$ is a
 tropical point in  $\mathcal S_X(\mathbb Z^{\rm max})$ if and only if the relations in \eqref{eqn:x-trop} hold. The relations in \eqref{eqn:x-trop} can be rewritten as follows:
\[
a_{i}^{t'}=\begin{cases}-a_{k}^t+\sum_{j=1}^m[b_{jk;t}]_+a_{j}^t
,& \text{if}\;i=k\;\;\text{and }\;\sum_{j=1}^mb_{jk;t}a_{j}^t\geq 0
;\\
-a_{k}^t+\sum_{j=1}^m[-b_{jk;t}]_+a_{j}^t
,&  \text{if}\;i=k\;\;\text{and }\;\sum_{j=1}^mb_{jk;t}a_{j}^t<0
;\\
a_{i}^t,&\text{if}\;i\neq k. \end{cases}
\]
This is exactly the same with  ${\bf a}^{t'}= (E_{k,\varepsilon_0}^{\widetilde B_t})^T{\bf a}^t$.
\end{proof}

\begin{proposition}\label{pro:a-point}
Let $(\mathcal S_X,\mathcal S_Y, \mathsf{\Lambda})$ be a Langlands-Poisson triple.  Then $$[{\bf g}]=\{{\bf g}^t\in\mathbb Z^m\mid t\in\mathbb T_n\}\mapsto \{\Lambda_t{\bf g}^t\in\mathbb Z^m\mid t\in\mathbb T_n\}$$ defines a map from the tropical points in $\mathcal S_Y(\mathbb Z^{\rm max})$ to those in $\mathcal S_X(\mathbb Z^{\rm max})$.
\end{proposition}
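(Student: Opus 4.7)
The plan is to verify the proposition directly using the mutation criterion provided by Lemma \ref{lem:a-mutation}, in the spirit of the proof of the previous proposition. Set ${\bf a}_t := \Lambda_t {\bf g}_t$ for each $t\in\mathbb T_n$, and fix an edge \begin{xy}(0,1)*+{t}="A",(10,1)*+{t'}="B",\ar@{-}^k"A";"B"\end{xy} in $\mathbb T_n$. It suffices to show the transition rule
\[
{\bf a}_{t'} = (E_{k,\varepsilon_0}^{\widetilde B_t})^T\,{\bf a}_t,
\]
with $\varepsilon_0$ determined by the sign of $\sum_{j=1}^m b_{jk;t}a_{j;t}$ as in that lemma.

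First I would compute ${\bf a}_{t'}=\Lambda_{t'}{\bf g}_{t'}$ by substituting the two transition formulas available: Proposition \ref{pro:pm} gives $\Lambda_{t'}=(E_{k,\varepsilon}^{\widetilde B_t})^T\Lambda_t E_{k,\varepsilon}^{\widetilde B_t}$ for either sign $\varepsilon\in\{\pm\}$, and Lemma \ref{lem:g-mutation} gives ${\bf g}_{t'}=E_{k,\varepsilon(g_{k;t})}^{\widetilde B_t}{\bf g}_t$. Exploiting the freedom in $\varepsilon$, I would choose $\varepsilon=\varepsilon(g_{k;t})$, and then use the involution identity $(E_{k,\varepsilon}^{\widetilde B_t})^2=I_m$ from \eqref{eqn:ef2} to telescope the middle factors. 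This collapses the product to
\[
{\bf a}_{t'}=(E_{k,\varepsilon(g_{k;t})}^{\widetilde B_t})^T\Lambda_t{\bf g}_t=(E_{k,\varepsilon(g_{k;t})}^{\widetilde B_t})^T{\bf a}_t.
\]

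The only remaining task is to check that the sign $\varepsilon(g_{k;t})$ produced by ${\bf g}_t$ agrees with the sign $\varepsilon_0$ that Lemma \ref{lem:a-mutation} prescribes in terms of ${\bf a}_t$. For this I would invoke the compatible pair identity $\widetilde B_t^T\Lambda_t=(S\mid{\bf 0})$ from the definition of a compatible pair (which is preserved under mutation by Proposition \ref{pro:pm}). Reading off the $k$-th entry of $\widetilde B_t^T{\bf a}_t=\widetilde B_t^T\Lambda_t{\bf g}_t=(S\mid{\bf 0}){\bf g}_t$ yields
\[
\sum_{j=1}^m b_{jk;t}a_{j;t}=s_k\,g_{k;t},
\]
and since $s_k>0$ this quantity has the same sign as $g_{k;t}$, so $\varepsilon_0=\varepsilon(g_{k;t})$.

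I do not expect a genuine obstacle here: the proof is a short algebraic manipulation, and the ingredients (mutation rule for $\Lambda$, tropical mutation rule for ${\bf g}$, the compatible pair identity, and the involution property of $E_{k,\varepsilon}^{\widetilde B_t}$) are all already on the table. The main thing to be careful about is the choice of sign $\varepsilon$ in the $\Lambda$-mutation so that the $E$-matrices cancel, and the book-keeping to confirm that the resulting sign matches the one demanded by Lemma \ref{lem:a-mutation}; this is precisely what the compatible pair relation arranges.
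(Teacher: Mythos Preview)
Your proposal is correct and follows essentially the same argument as the paper: both compute ${\bf a}_{t'}=\Lambda_{t'}{\bf g}_{t'}$ by substituting the mutation formulas for $\Lambda_{t'}$ and ${\bf g}_{t'}$ with the sign $\varepsilon=\varepsilon(g_{k;t})$, cancel via $(E_{k,\varepsilon}^{\widetilde B_t})^2=I_m$, and then use the compatible pair identity $\widetilde B_t^T\Lambda_t=(S\mid{\bf 0})$ to match $\varepsilon(g_{k;t})$ with the sign $\varepsilon_0$ required by Lemma~\ref{lem:a-mutation}.
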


\begin{proof}
Denote by ${\bf a}^t \coloneqq   \Lambda_t{\bf g}^t=(a_{1}^t,\ldots,a_{m}^t)^T$ for any vertex $t\in\mathbb T_n$.
We will use Lemma \ref{lem:a-mutation} to show the collection $\{{\bf a}^t\in\mathbb Z^m\mid t\in\mathbb T_n\}$ is a tropical point in $\mathcal S_X(\mathbb Z^{\rm max})$.

Let \begin{xy}(0,1)*+{t}="A",(10,1)*+{t'}="B",\ar@{-}^k"A";"B" \end{xy} be an edge in $\TT_n$.
Since $[{\bf g}]=\{{\bf g}^t\in\mathbb Z^m\mid t\in\mathbb T_n\}$ is a tropical point in $\mathcal S_Y(\mathbb Z^{\rm max})$ and by Lemma \ref{lem:g-mutation}, we have  ${\bf g}^{t'}=E_{k,\varepsilon(g_{k}^t)}^{\widetilde B_t}{\bf g}^t$,
 where
\[\varepsilon(g_{k}^t)=\begin{cases}
   -,&\text{if }g_{k}^t\geq 0;\\
    +,&\text{if }g_{k}^t<0.
\end{cases} \]
By Proposition \ref{pro:pm} (i), we have $\Lambda_{t'}=(E_{k,\varepsilon(g_{k}^t)}^{\widetilde B_t})^T\Lambda_t E_{k,\varepsilon(g_{k}^t)}^{\widetilde B_t}$.
Hence,
\begin{eqnarray}
{\bf a}^{t'}&\xlongequal{\text{by def.}}&\Lambda_{t'}{\bf g}^{t'}=\left((E_{k,\varepsilon(g_{k}^t)}^{\widetilde B_t})^T\Lambda_tE_{k,\varepsilon(g_{k}^t)}^{\widetilde B_t}\right)\cdot \left(E_{k,\varepsilon(g_{k}^t)}^{\widetilde B_t}{\bf g}^t\right)\nonumber\\
&\xlongequal{(E_{k,\pm }^{\widetilde B_t})^2=I_m}&(E_{k,\varepsilon(g_{k}^t)}^{\widetilde B_t})^T(\Lambda_t{\bf g}^t)=(E_{k,\varepsilon(g_{k}^t)}^{\widetilde B_t})^T{\bf a}^t. \nonumber
\end{eqnarray}

Now we prove $\varepsilon(g_{k}^t)=\varepsilon_0$, where $\varepsilon_0$ is as given in Lemma \ref{lem:a-mutation}. By the definitions of $\varepsilon(g_{k}^t)$ and  $\varepsilon_0$, it suffices to show that
$g_{k}^t\geq 0$ if and only if $\sum_{j=1}^mb_{jk;t}a_{j}^t\geq 0$.
Since $(\widetilde B_t, \Lambda_t)$ is a compatible pair, we know that
$\widetilde B_t^T\Lambda_t$ has the form
$$\widetilde B_t^T\Lambda_t=(S\mid {\bf 0}),$$ where $S=diag(s_1,\ldots,s_n)$ is a diagonal matrix  with $s_i>0$, $i\in [1,n]$ and ${\bf 0}$ is the $n\times(m-n)$ zero matrix.
Thus we have $\widetilde B_t^T{\bf a}^t=\widetilde B_t^T\Lambda_t{\bf g}^t=(S\mid {\bf 0}){\bf g}^t$. By comparing the $k$th component on both sides of the equality, we get
\[\sum_{j=1}^mb_{jk;t}a_{j}^t=s_kg_{k}^t.\]
Since $s_k$ is a positive integer, we get that $g_{k}^t\geq 0$ if and only if $\sum_{j=1}^mb_{jk;t}a_{j}^t\geq 0$. So we have $\varepsilon(g_{k}^t)=\varepsilon_0$.

Hence, ${\bf a}^{t'}=(E_{k,\varepsilon(g_{k}^t)}^{\widetilde B_t})^T{\bf a}^t=(E_{k,\varepsilon_0}^{\widetilde B_t})^T{\bf a}^t$. By Lemma \ref{lem:a-mutation}, we know the collection $$\{{\bf a}^t=\Lambda_t{\bf g}^t\in\mathbb Z^m\mid t\in\mathbb T_n\}$$ is a tropical point in $\mathcal S_X(\mathbb Z^{\rm max})$.
This completes the proof.
\end{proof}

\begin{proposition}\label{pro:beta-map}
    Let $(\mathcal S_X,\mathcal S_Y, \mathsf{\Lambda})$ be a Langlands-Poisson triple and let $\mathcal U$ be the corresponding $\mathsf{\Lambda}$-upper cluster algebra. Then any good element $u\in \mathcal U$ defines a unique semifield homomorphism 
  $$\beta_{u}:\; \mathbb F_{>0} :=   \mathbb Q_{\rm sf}(x_{1;t_0},\ldots,x_{m;t_0})\rightarrow \mathbb Z^{\rm max}$$ such that $\beta_{u}({\bf x}_t)=(\Lambda_t{\bf g}_u^t)^T$ for any vertex $t\in\mathbb T_n$, where ${\bf g}_u^t\in\mathbb Z^m$ is the extended $g$-vector of $u$ with respect to vertex $t$.
\end{proposition}

\begin{proof}
Since $u$ is a good element, 
we know that the collection  $$[{\bf g}_{u}] \coloneqq   \{{\bf g}_{u}^t\in\mathbb Z^m\mid t\in\mathbb T_n\}$$ is a tropical point in $\mathcal S_Y(\mathbb Z^{\rm max})$. Then by Proposition \ref{pro:a-point}, the collection $$[{\bf a}] \coloneqq   \{{\bf a}^t=\Lambda_t{\bf g}_{u}^{t}\in\mathbb Z^m\mid t\in\mathbb T_n\}$$ is a tropical point in $\mathcal S_X(\mathbb Z^{\rm max})$. By Corollary \ref{cor:bijection}, the  tropical point $[{\bf a}]\in \mathcal S_X(\mathbb Z^{\rm max})$  corresponds to a unique semifield homomorphism $$\beta_{u} \colon\mathbb F_{>0} \coloneqq   \mathbb Q_{\rm sf}(x_{1;t_0},\ldots,x_{m;t_0})\rightarrow \mathbb Z^{\rm max}$$ such that $\beta_{u}({\bf x}_t)=({\bf a}^t)^T=(\Lambda_t{\bf g}_u^t)^T$ for any vertex $t\in\mathbb T_n$. 
\end{proof}

\begin{remark}
 We remark that the above proof  still works if we replace the good elements by compatibly pointed elements. But in this paper, we only use good elements.   
\end{remark}

  \subsection{Tropical invariant and $F$-invariant}\label{sec42}

In this subsection, we fix a Langlands-Poisson triple $(\mathcal S_X,\mathcal S_Y,\mathsf{\Lambda})$, and let $\mathcal U$ be the corresponding $\mathsf{\Lambda}$-upper cluster algebra. Denote by $S=diag(s_1,\ldots,s_n)$ the type of the compatible pair $(\widetilde B_{t_0},\Lambda_{t_0})$, namely, $\widetilde B_{t_0}^T\Lambda_{t_0}=(S\mid {\bf 0})$, where ${\bf 0}$ is the $n\times (m-n)$ zero matrix.
By Proposition \ref{pro:fullrank}, the mutation matrices of $\mathcal U$ always have full rank.

We denote by $\mathcal U^{[{\bf g}]}$ the set of $[{\bf g}]$-good elements in $\mathcal U$ and by
$$\mathcal U^{\gd} \coloneqq   \bigcup_{[{\bf g}]\in\mathcal S_Y(\mathbb Z^{\rm max})}\mathcal U^{[\bf g]}$$ the set of all good elements in $\mathcal U$. Notice that the union $\bigcup_{[{\bf g}]\in\mathcal S_Y(\mathbb Z^{\rm max})}\mathcal U^{[\bf g]}$ is  disjoint.

\begin{definition}[Tropical invariant and $F$-invariant] \label{def:f-invariant}
Let $\mathcal U$ be a $\mathsf{\Lambda}$-upper cluster algebra. 

(i) For  
 $u,u'\in \mathcal U^{\gd}$, the {\em tropical invariant} $\langle u,u'\rangle_{\rm trop}$ of the pair $(u,u')$
 is  defined by 
 $$\langle u,u'\rangle_{\rm trop} \coloneqq   \beta_{u'}(u)\;\in \mathbb Z,$$ where
 $\beta_{u'}\colon\mathbb F_{>0} \coloneqq   \mathbb Q_{\rm sf}(x_{1;t_0},\ldots,x_{m;t_0})\rightarrow \mathbb Z^{\rm max}$ is the semifield homomorphism defined in Proposition \ref{pro:beta-map}. The pairing $
    \langle-,-\rangle_{\rm trop}\colon\mathcal U^{\gd}
    \times \mathcal U^{\gd} \rightarrow \mathbb Z$
is called the {\em tropical pairing} on the $\mathsf{\Lambda}$-upper cluster algebra $\mathcal U$.

(ii) For  
 $u,u'\in \mathcal U^{\gd}$, the {\em $F$-invariant} $(u\mid\mid u')_F$ of the pair $(u,u')$
 is defined by
 $$(u\mid\mid u')_F \coloneqq   \beta_{u'}(u)+\beta_u(u')=\langle u,u'\rangle_{\rm trop}+\langle u',u\rangle_{\rm trop}.$$
 In the case $(u\mid\mid u')_F=0$, we say that $u$ and $u'$ are {\em $F$-compatible}.
\end{definition}
\begin{remark}
   Notice that in general $\langle u,u'\rangle_{\rm trop}\neq \langle u',u\rangle_{\rm trop}$, but we always have $(u\mid\mid u')_F=(u'\mid\mid u)_F$, by definition.
\end{remark}

 Given a non-zero polynomial $$F=\sum_{{\bf v}\in\mathbb N^n}c_{{\bf v}}{\bf y}^{{\bf v}}\in\mathbb Z[y_1,\ldots,y_n]$$ and a vector ${\bf r}\in \mathbb Z^n$, we denote by
$$F[{\bf r}] \coloneqq   \max\{{\bf v}^T{\bf r}\mid c_{{\bf v}}\neq 0\}\in\mathbb Z.$$
We call the map $F[-]\colon \mathbb Z^n\rightarrow\mathbb Z$ a {\em tropical polynomial}.

\begin{example}
 Take $F=1+y_1+y_1y_2\in\mathbb Z[y_1,y_2]$ and ${\bf r}=\begin{bmatrix}
     -2\\1
 \end{bmatrix}$. Then
 \begin{eqnarray}
     F[{\bf r}]=\max\{
     \begin{bmatrix}
         0,0
     \end{bmatrix}\begin{bmatrix}
     -2\\1
 \end{bmatrix}, \begin{bmatrix}
         1,0
     \end{bmatrix}\begin{bmatrix}
     -2\\1
 \end{bmatrix}, \begin{bmatrix}
         1,1
     \end{bmatrix}\begin{bmatrix}
     -2\\1
 \end{bmatrix}
     \}=\max\{0,-2,-1\}=0.
     \nonumber
 \end{eqnarray}
\end{example}

\begin{proposition}\label{pro:trop}
The following statements hold.
\begin{itemize}
    \item [(i)]  If $F\in\mathbb Z[y_1,\ldots,y_n]$ has constant term $1$, then  $F[{\bf r}]\geq 0$ for any ${\bf r}\in\mathbb Z^n$.
    \item[(ii)] If $0\neq F_1,F_2\in\mathbb Z_{\geq 0}[y_1,\ldots,y_n]$, then  $(F_1F_2)[{\bf r}]=F_1[{\bf r}]+F_2[{\bf r}]$
for any ${\bf r}\in\mathbb Z^n$.
\end{itemize}
\end{proposition}

\begin{proof}
(i) This is clear.  (ii)  Fix ${\bf r}=(r_1,\ldots,r_n)^T\in\mathbb Z^n$ and let $$\pi_{\bf r}\colon \mathbb Q_{\rm sf}(y_1,\ldots,y_n)\rightarrow\mathbb Z^{\max}$$ be the semifield homomorphism induced by $y_i\mapsto r_i$. For any $0\neq F\in\mathbb Z_{\geq 0}[y_1,\ldots,y_n]$, we have $F\in \mathbb Q_{\rm sf}(y_1,\ldots,y_n)$ and  $\pi_{\bf r}(F)=F[{\bf r}]$. Since $0\neq F_1,F_2\in\mathbb Z_{\geq 0}[y_1,\ldots,y_n]$, we have $\pi_{\bf r}(F_1F_2)=\pi_{\bf r}(F_1)+\pi_{\bf r}(F_2)$,  \ie $(F_1F_2)[{\bf r}]=F_1[{\bf r}]+F_2[{\bf r}]$. \end{proof}

The following result shows that for each $\mathsf{\Lambda}$-seed $({\bf x}_w,\widetilde B_w, \Lambda_w)$ of $\mathcal U$, we have an explicit formula to calculate the tropical invariant and $F$-invariant.  
\begin{theorem} \label{thm:mutation-inv}
Let $u$ and $u'$ be two good elements in a $\mathsf{\Lambda}$-upper cluster algebra $\mathcal U$. Let
\[ u={\bf x}_w^{{\bf g}_u^w}F_u^w(\hat y_{1;w},\ldots,\hat y_{n;w}),\;\;\;u'={\bf x}_w^{{\bf g}_{u'}^w}F_{u'}^w(\hat y_{1;w},\ldots,\hat y_{n;w})
\]
be the canonical expressions of $u$ and $u'$ with respect to any vertex $w\in \mathbb T_n$. Then we have
\vspace{1mm}
\begin{itemize}
    \item [(i)] $\langle u,u'\rangle_{\rm trop}= ({\bf g}_u^w)^T\Lambda_w{\bf g}_{u'}^w+F_u^w[(S\mid {\bf 0}){\bf g}_{u'}^w]$.\vspace{1.5mm}
    \item[(ii)] $(u\mid\mid u')_F= F_u^w[(S\mid {\bf 0}){\bf g}_{u'}^w]+F_{u'}^w[(S\mid {\bf 0}){\bf g}_{u}^w]$. In particular,  $(u\mid\mid u')_F\in\mathbb Z_{\geq 0}$.\vspace{1.5mm}
    \item[(iii)] $(u\mid\mid u)_F=0$ whenever $u$ is a cluster monomial.
\end{itemize}
\end{theorem}
\begin{proof}
(i) By definition, $\langle u,u'\rangle_{\rm trop}=\beta_{u'}(u)$, where $\beta_{u'}$ is the
unique semifield homomorphism $$\beta_{u'} \colon\mathbb F_{>0} \coloneqq   \mathbb Q_{\rm sf}(x_{1;t_0},\ldots,x_{m;t_0})\rightarrow \mathbb Z^{\rm max}$$ such that $\beta_{u'}({\bf x}_t)=(\Lambda_t{\bf g}_{u'}^t)^T$ for any vertex $t\in\mathbb T_n$. 
By applying the semifield homomorphism $\beta_{u'}$ to  $u={\bf x}_w^{{\bf g}_u^w}F_u^w(\widehat {\bf y}_w) \in\mathbb F_{>0}$, we have
$$\beta_{u'}(u)=\beta_{u'}({\bf x}_w^{{\bf g}_u^w})+\beta_{u'}(F_u^w(\widehat {\bf y}_w)).$$
Since $\beta_{u'}({\bf x}_w)=(\Lambda_w{\bf g}_{u'}^w)^T$, we have $$\beta_{u'}({\bf x}_w^{\bf h})=(\Lambda_w{\bf g}_{u'}^w)^T{\bf h}=[(\Lambda_w{\bf g}_{u'}^w)^T{\bf h}]^T={\bf h}^T\Lambda_w{\bf g}_{u'}^w,\quad \forall {\bf h}\in\mathbb Z^m.$$
We write $F_u^w({\bf y})=\sum_{{\bf v}\in\mathbb N^n}c_{{\bf v}}{\bf y}^{{\bf v}}\in\mathbb N[y_1,\ldots,y_n]$. Then  $F_u^w(\widehat {\bf y}_w)=\sum_{{\bf v}\in\mathbb N^n}c_{{\bf v}}{\bf x}_w^{\widetilde B_w{\bf v}}$. Thus 
\begin{align}
  \beta_{u'}(F_u^w(\widehat {\bf y}_w))&= \beta_{u'}(\sum_{{\bf v}\in\mathbb N^n}c_{{\bf v}}{\bf x}_w^{\widetilde B_w{\bf v}})=\max\{{\bf v}^T\widetilde B_w^T\Lambda_w{\bf g}_{u'}^w\mid c_{\bf v}\neq 0\}\nonumber\\
    &=\max\{{\bf v}^T(S\mid {\bf 0}){\bf g}_{u'}^w\mid c_{\bf v}\neq 0\}
     = F_u^w[(S\mid{\bf 0}){\bf g}_{u'}^w].\nonumber
\end{align}
Hence, we have
\[\langle u,u'\rangle_{\rm trop}=\beta_{u'}(u)=\beta_{u'}({\bf x}_w^{{\bf g}_u^w})+\beta_{u'}(F_u^w(\widehat {\bf y}_w))=({\bf g}_u^w)^T\Lambda_w{\bf g}_{u'}^w +F_u^w[(S\mid{\bf 0}){\bf g}_{u'}^w].\]

(ii) By (i) and the definition of $F$-invariant, we have 
\begin{align}
    (u\mid\mid u')_F&=\langle u,u' \rangle_{\rm trop}+\langle u',u \rangle_{\rm trop}\nonumber\\
    &=({\bf g}_u^w)^T\Lambda_w{\bf g}_{u'}^w+F_u^w[(S\mid {\bf 0}){\bf g}_{u'}^w]+ ({\bf g}_{u'}^w)^T\Lambda_w{\bf g}_{u}^w+F_{u'}^w[(S\mid {\bf 0}){\bf g}_{u}^w].\nonumber
\end{align}
Since $\Lambda_w$ is skew-symmetric, we know that $({\bf g}_u^w)^T\Lambda_w{\bf g}_{u'}^w+({\bf g}_{u'}^w)^T\Lambda_w{\bf g}_{u}^w=0$. Thus
$$(u\mid\mid u')_F=F_u^w[(S\mid {\bf 0}){\bf g}_{u'}^w]+F_{u'}^w[(S\mid {\bf 0}){\bf g}_{u}^w]. $$
 Since both $F_u^w$ and $F_{u'}^w$ are polynomials with constant term $1$, we have \[F_u^{w}[{\bf r}], \;F_{u'}^w[{\bf r}]\;\in\mathbb Z_{\geq 0}\]
for any ${\bf r}\in\mathbb Z^n$.
So   $(u\mid\mid u')_F=F_u^w[(S\mid {\bf 0}){\bf g}_{u'}^w]+F_{u'}^w[(S\mid {\bf 0}){\bf g}_{u}^w]\;\in\mathbb Z_{\geq 0}$. 

(iii) Say $u$ is a cluster monomial in seed $({{\bf x}_w,\widetilde B_w})$. Then $F_u^w=1$. Then by (ii), we see that $(u\mid\mid u)_F=0$.
\end{proof}

Since the $F$-invariant $(u\mid\mid u')_F$ can be written as 
\[(u\mid\mid u')_F= F_u^w[(S\mid {\bf 0}){\bf g}_{u'}^w]+F_{u'}^w[(S\mid {\bf 0}){\bf g}_{u}^w],\] 
we call the non-negative integer $F_u^w[(S\mid {\bf 0}){\bf g}_{u'}^w]$  the {\em partial $F$-invariant} of $(u,u')$ at vertex $w\in\mathbb T_n$.
\begin{remark}
     Notice that  $(S\mid {\bf 0}) {\bf g}_{u'}^w=S({\bf g}_{u'}^w)^\circ$, where $({\bf g}_{u'}^w)^\circ\in\mathbb Z^n$ is the principal part of ${\bf g}_u^w\in\mathbb Z^m$. This suggests that the notion of partial $F$-invariant at vertex $w\in\mathbb T_n$ and the $F$-invariant in some sense only depend on 
  the principal part of the mutation matrices (i.e., the exchange matrices) of $\mathcal A$ and a choice of the skew-symmetrizer $S$.
\end{remark}

\begin{remark}\label{rmk:fei}
Recently, Fei introduced and studied the tropical $F$-polynomials in \cite{fei_2019a,fei_2019b} from the viewpoint of representation theory of finite-dimensional algebras and  \cite[Theorem 3.22]{fei_2019b} suggests that the partial $F$-invariant coincides with the partial $E$-invariant for Jacobi-finite quivers with potentials. 
\end{remark}

\begin{proposition}\label{pro:f-degree}
 Let $x_{i;t}$ be a cluster variable
of $\mathcal U$, where $i\in[1,m]$ and $t\in\mathbb T_n$. Then for any good element $u\in\mathcal U$, we have
$$(x_{i;t}\mid\mid u)_F=\begin{cases}s_if_{i;u}^t,&i\in[1,n];\\
0,&i\in[n+1,m],\end{cases}
$$
where $s_i$ is the $(i,i)$-entry of the diagonal matrix $S=diag(s_1,\ldots,s_n)$ and
$f_{i;u}^t$ is the $i$th component of the $f$-vector of $u$ with respect vertex $t$.
\end{proposition}
\begin{proof}
We know that $F_{x_{i;t}}^t=1$ and
${\bf g}_{x_{i;t}}^t={\bf e}_i$, where ${\bf e}_i$ is the $i$th column of $I_m$.  Now we use the vertex $t$ to calculate $(x_{i;t}\mid\mid u)_F$. By using the formula in Theorem \ref{thm:mutation-inv} (ii), we have
\[ (x_{i;t}\mid\mid u)_F=0+F_u^t[(S\mid{\bf 0}){\bf e}_i].\]
Then the result follows.
\end{proof}

\begin{remark}\label{rmk:f-degree}
(i) The $F$-invariant $(-\mid\mid-)_F\colon\mathcal U^{\gd}\times \mathcal U^{\gd}\rightarrow\mathbb Z_{\geq 0}$ is a great improvement of the  $f$-compatibility degree introduced by Fu and Gyoda in \cite{Fu-Gyoda} and thus the compatibility degree introduced by Fomin and Zelevinsky in \cite{fz-2003y}.  The $f$-compatibility degree, defined using the components of $f$-vectors of cluster variables, is a function
 $$(-\mid\mid -)_f\colon\mathcal X \times \mathcal X\rightarrow\mathbb Z_{\geq 0},$$
 where $\mathcal X$ is the set of unfrozen cluster variables in $\mathcal U$. By Proposition \ref{pro:f-degree}, when restricted to $\mathcal X$, the $F$-invariant and the $f$-compatibility degree are related as follows:
$$(x_{i;t}\mid\mid x_{j;w})_F=s_i(x_{i;t}\mid\mid x_{j;w})_f.$$

(ii) By construction, $F$-invariant is symmetric,  \ie $(u \mid\mid u')_F=(u'\mid\mid u)_F$ for any two good elements in $\mathcal U$. Applying to the case $u=x_{i;t}$ and $u=x_{j;w}$ with $i,j\in[1,n]$, we get
$$s_i(x_{i;t}\mid\mid x_{j;w})_f=(x_{i;t}\mid\mid x_{j;w})_F=(x_{j;w}\mid\mid x_{i;t})_F=s_j(x_{j;w}\mid\mid x_{i;t})_f
.$$
This recovers the result in \cite[Proposition 4.14 (2)]{Fu-Gyoda}.
\end{remark}

\begin{corollary}\label{cor:xu=0}
Let ${\bf x}_t$ be a cluster of $\mathcal U$ and $u\in\mathcal U^{\gd}$.  The following statements hold.
\begin{itemize}
\item[(i)] If $(x_{k;t}\mid\mid u)_F=0$ for some $k\in[1,n]$, then the $k$th component $g_{k}^t$ of ${\bf g}_u^t=\de^t(u)$ is non-negative.

    \item [(ii)] If $(x_{i;t}\mid\mid u)_F=0$ holds for any $i\in[1,n]$, then $u$ is a cluster monomial in ${\bf x}_t$.
    \item[(iii)] Let $k$ be an integer in $[1,n]$ and let $({\bf x}_{t'},\widetilde B_{t'})=\mu_k({\bf x}_t,\widetilde B_t)$. Suppose that $(x_{k;t}\mid\mid u)_F\neq 0$ but $(x_{i;t}\mid\mid u)_F=0$ for any $i\in[1,n]\backslash\{k\}$.  Then $(x_{i;t'}\mid\mid u)_F=0$ for any $i\in[1,n]$. In particular, $u$ is a cluster monomial in ${\bf x}_{t'}$.
\end{itemize}
\end{corollary}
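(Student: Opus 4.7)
The plan is to reduce all three parts to the already-established Corollary \ref{cor:f-vector} via the bridge provided by Proposition \ref{pro:f-degree}. The crucial observation is that for $i \in [1,n]$, Proposition \ref{pro:f-degree} gives $(x_{i;t} \mid\mid u)_F = s_i f_{i;t}$, and since the diagonal entries $s_i > 0$ of the skew-symmetrizer $S$ are strictly positive integers, one has
\[
(x_{i;t} \mid\mid u)_F = 0 \iff f_{i;t} = 0.
\]
So the hypotheses in (i), (ii), (iii) translate directly into statements about the $f$-vector ${\bf f}_t = (f_{1;t},\ldots,f_{n;t})^T$ of $u$ with respect to $t$.

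For part (i), the hypothesis $(x_{k;t}\mid\mid u)_F = 0$ yields $f_{k;t} = 0$, so Corollary \ref{cor:f-vector} (i) immediately gives $g_{k;t} \ge 0$. For part (ii), the hypothesis forces $f_{i;t}=0$ for every $i \in [1,n]$, and Corollary \ref{cor:f-vector} (ii) concludes that $u$ is a cluster monomial in ${\bf x}_t$.

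For part (iii), the assumptions yield $f_{k;t} \ne 0$ and $f_{i;t} = 0$ for $i \in [1,n]\setminus\{k\}$, so Corollary \ref{cor:f-vector} (iii) already delivers the main claim: $u$ is a cluster monomial in ${\bf x}_{t'}$. What still needs verification is the strengthening that $(x_{i;t'}\mid\mid u)_F = 0$ for \emph{every} $i \in [1,n]$, not only in the direction $k$. For this I plan to use the observation that when $u$ is a cluster monomial in ${\bf x}_{t'}$, say $u = {\bf x}_{t'}^{{\bf g}_{t'}}$ with $g_{i;t'} \ge 0$ for $i \in [1,n]$, its Laurent expansion in ${\bf x}_{t'}$ is a single monomial, so the $F$-polynomial $F_{t'}$ of $u$ with respect to vertex $t'$ is identically $1$ and therefore the whole $f$-vector ${\bf f}_{t'}$ vanishes. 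Applying Proposition \ref{pro:f-degree} once more in the opposite direction gives $(x_{i;t'}\mid\mid u)_F = s_i f_{i;t'} = 0$ for all $i \in [1,n]$, as desired.

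There is essentially no obstacle here; the proof is a two-line translation through Proposition \ref{pro:f-degree} combined with the three prepared statements of Corollary \ref{cor:f-vector}. The only point that requires a little care is remembering, in (iii), that ``cluster monomial in ${\bf x}_{t'}$'' forces $F_{t'} = 1$ so that the converse direction of the $F$-invariant $\leftrightarrow$ $f$-vector dictionary can be applied to conclude the vanishing of $(x_{i;t'}\mid\mid u)_F$ for $i \ne k$ as well.
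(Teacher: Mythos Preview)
Your proposal is correct and follows exactly the same approach as the paper, which simply states that the results follow from Proposition~\ref{pro:f-degree} and Corollary~\ref{cor:f-vector}. Your extra care in part~(iii), deducing $(x_{i;t'}\mid\mid u)_F=0$ from the fact that $u$ being a cluster monomial in ${\bf x}_{t'}$ forces $F_{t'}=1$ and hence ${\bf f}_{t'}=0$, is precisely the detail implicit in the paper's one-line proof (indeed, the proof of Corollary~\ref{cor:f-vector}~(iii) already records that ${\bf f}_{t'}=0$).
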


\begin{proof}
The results follow from Proposition \ref{pro:f-degree} and Corollary \ref{cor:f-vector}.
\end{proof}

\begin{proposition}\label{pro:bigood}
  Let  $u$ be  a $[{\bf g}]$-bigood element in $\mathcal U$, and let ${\bf x}_t^{\bf h}=\prod_{j=1}^{m}x_{j;t}^{h_j}$ be a cluster monomial of $\mathcal U$,  i.e., ${\bf h}^\circ \coloneqq   (h_1,\ldots,h_n)^T\in\mathbb Z_{\geq 0}^n$. Then 
  \begin{itemize}
      \item [(i)] $({\bf x}_t^{\bf h}\mid\mid u)_F=\sum_{j=1}^nh_{j}\cdot (x_{j;t}\mid\mid u)_F$. \vspace{1.5mm}
      \item[(ii)] $F_{u}^w[(S\mid {\bf 0}){\bf g}_{{\bf x}_t^{\bf h}}^w]=\sum_{j=1}^nh_{j}\cdot F_{u}^w[(S\mid {\bf 0}){\bf g}_{x_{j;t}}^w]$ for any vertex $w\in\mathbb T_n$.
  \end{itemize}
 
\end{proposition}
\begin{proof}
(i)  Consider the canonical expression of $u$ with respect to the vertex $t$,
     $$u={\bf x}_t^{{\bf g}_u^t}F_u^t(\hat y_{1;t},\ldots,\hat y_{n;t}),$$
    where ${\bf g}_u^t$ is the value of $[{\bf g}]$ at vertex $t$.  Since $u$ is a $[{\bf g}]$-bigood element, $F_u^t$ has the form: 
$$F_u^t=1+\text{middle terms }+y_1^{f_{1}^t}\cdots y_n^{f_{n}^t},$$
    where ${\bf f}_u^t=(f_{1}^t,\ldots,f_{n}^t)^T$ is the $f$-vector of $u$ with respect to vertex $t$.  Since ${\bf x}_t^{\bf h}$ is a cluster momomial, we know that $F_{{\bf x}_t^{\bf h}}^t=1$ and $(h_1,\ldots,h_n)^T\in\mathbb Z_{\geq 0}^n$. By using the cluster ${\bf x}_t$ to calculate $ ({\bf x}_t^{\bf h}\mid\mid u)_F$, we have 
 \begin{align}
     ( {\bf x}_t^{\bf h}\mid\mid u)_F&=0+F_u^t[(S\mid {\bf 0}){\bf h}]=F_u^t[(s_1h_1,\ldots,s_nh_n)^T]\nonumber\\
    &=f_{1}^ts_1h_1+\ldots +f_{n}^ts_nh_n=\sum_{j=1}^nh_{j}\cdot (x_{j;t}\mid\mid u)_F.\nonumber
 \end{align}

 (ii) Let $w$ be a vertex in $\mathbb T_n$. We have 
\begin{eqnarray}
    ( {\bf x}_t^{\bf h}\mid\mid u)_F&\xlongequal{\text{by (i)}}&\sum_{j=1}^nh_{j}\cdot (x_{j;t}\mid\mid u)_F\nonumber\\
    &=&\sum_{j=1}^nh_{j}\cdot \left(F_{x_{j;t}}^w[(S\mid {\bf 0}){\bf g}_{u}^w]+F_u^w[(S\mid {\bf 0}){\bf g}_{x_{j;t}}^w]\right)\nonumber\\
    &=&\sum_{j=1}^nh_{j}\cdot F_{x_{j;t}}^w[(S\mid {\bf 0}){\bf g}_{u}^w]+\sum_{j=1}^nh_{j}\cdot F_u^w[(S\mid {\bf 0}){\bf g}_{x_{j;t}}^w]\nonumber\\
     &=&F_{{\bf x}_t^{\bf h}}^w[(S\mid {\bf 0}){\bf g}_{u}^w]+\sum_{j=1}^nh_{j}\cdot F_u^w[(S\mid {\bf 0}){\bf g}_{x_{j;t}}^w].\nonumber
\end{eqnarray}
The last equality comes from the facts that $F_{{\bf x}_t^{\bf h}}^w=\prod_{j=1}^m(F_{x_{j;t}}^w)^{h_j}=\prod_{j=1}^n(F_{x_{j;t}}^w)^{h_j}$ and that for any two polynomials $F_1,F_2\in\mathbb Z_{\geq 0}[y_1,\ldots,y_n]$ and any vector ${\bf r}\in\mathbb Z^n$, we have $$(F_1F_2)[{\bf r}]=F_1[{\bf r}]+F_2[{\bf r}],$$ by Proposition \ref{pro:trop} (ii).

On the other hand, we know that 
\[({\bf x}_t^{\bf h}\mid\mid u)_F=F_{{\bf x}_t^{\bf h}}^w[(S\mid {\bf 0}){\bf g}_{u}^w]+F_u^w[(S\mid {\bf 0}){\bf g}_{{\bf x}_t^{\bf h}}^w].\]
By comparing the two equalities, we obtain 
 $F_{u}^w[(S\mid {\bf 0}){\bf g}_{{\bf x}_t^{\bf h}}^w]=\sum_{j=1}^nh_{j}\cdot F_{u}^w[(S\mid {\bf 0}){\bf g}_{x_{j;t}}^w]$.
\end{proof}

\begin{lemma}\label{lem:xz=0}
    Let $x$ and $z$ be two unfrozen cluster variables of $\mathcal U$. If $(x\mid\mid z)_F=0$, then they are contained in the same cluster.
\end{lemma}
\begin{proof}
    Since $(x\mid\mid z)_F=0$ and by Remark \ref{rmk:f-degree},  the $f$-compatibility degree $(x\mid\mid z)_f$  is zero. Then the result follows from \cite[Theorem 3.3]{Fu-Gyoda}. 
    
Let us give another proof here. Consider the tropical point $$[{\bf g}_z] \coloneqq   \{{\bf g}_z^t\in\mathbb Z^m\mid t\in\mathbb T_n\}\in\mathcal S_Y(\mathbb Z^{\rm max})$$ corresponding to the cluster variable $z$. 
Since $(x\mid\mid z)_F=0$ and by Corollary \ref{cor:xu=0} (i), we know that the $k$th component $g_{k;z}^t$ of ${\bf g}_z^t$ satisfies $g_{k;z}^t\geq 0$ whenever $x=x_{k;t}$ for a vertex $t$. 

On the other hand, by \cite[Theorem 8]{cao-li-2020}, we know that there exists a seed $({\bf x}_w,\widetilde B_w)$ containing $x$ (say $x=x_{k;w}$) such that $g_{i;z}^w\geq 0$ for any $i\in[1,n]$ with $x_{i;t}\neq x$,  \ie for any $i\in[1,n]\setminus\{k\}$. Thus for the vertex $w$, we have $g_{j;z}^w\geq 0$ for $j=1,\ldots,n$,  \ie the principal part $({\bf g}_z^t)^\circ$ of ${\bf g}_z^t$ belongs to $\mathbb Z_{\geq 0}^n$. Thus ${\bf x}_w^{{\bf g}_z^w}$ is a cluster monomial of $\mathcal U$. Hence, the two cluster monomials ${\bf x}_w^{{\bf g}_z^w}$ and $z$ have the same extended $g$-vector ${\bf g}_z^w$ with respect to vertex $w$. It is well-known that different cluster monomials have different extended $g$-vectors,  \confer\cite{GHKK18}. So we must have $z={\bf x}_w^{{\bf g}_z^w}$. This implies that $z$ is a cluster variable in ${\bf x}_w$. In particular, $z$ and $x=x_{k;w}$ are contained in the same cluster ${\bf x}_w$.
\end{proof}

\begin{lemma}\label{lem:cluster}
Let $\{z_1,\ldots,z_p\}$ be a subset of unfrozen cluster variables of $\mathcal U$. If $(z_i\mid\mid z_j)_F=0$ for any $i,j\in[1,p]$, then $\{z_1,\ldots,z_p\}$ is a subset of some cluster of $\mathcal U$.
\end{lemma}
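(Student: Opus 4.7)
I propose induction on $p$. The case $p=1$ is immediate since any cluster variable lies in some cluster.

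For the inductive step ($p\ge 2$), the assumption $(z_i\|z_j)_F=0$ for $i,j\in[1,p-1]$ together with the induction hypothesis yields a cluster ${\bf x}_t$ containing $\{z_1,\dots,z_{p-1}\}$. Because each $z_k$ is unfrozen, I may write $z_k=x_{i_k;t}$ with $i_1,\dots,i_{p-1}$ pairwise distinct elements of $[1,n]$; set $I:=\{i_1,\dots,i_{p-1}\}$. Let ${\bf f}_t=(f_{1;t},\dots,f_{n;t})$ denote the $f$-vector of $z_p$ with respect to $({\bf x}_t,\widetilde B_t)$. Proposition~\ref{pro:f-degree} combined with the hypothesis $(z_k\|z_p)_F=0$ forces $s_{i_k}f_{i_k;t}=0$; since $s_{i_k}>0$, this gives $f_{i;t}=0$ for every $i\in I$.

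If ${\bf f}_t=0$, Corollary~\ref{cor:f-vector}(ii) tells us that $z_p$ is a cluster monomial in ${\bf x}_t$; because $z_p$ is an unfrozen cluster variable (hence indecomposable as a cluster monomial), it must coincide with some $x_{j;t}$, so $\{z_1,\dots,z_p\}\subseteq{\bf x}_t$ and the proof is complete.

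If instead ${\bf f}_t\ne 0$, the strategy is to produce a sequence of mutations at directions drawn entirely from $[1,n]\setminus I$, ending at a seed $w$ for which the $f$-vector of $z_p$ vanishes. Such mutations do not alter $x_{i_k;t}$, so $z_k=x_{i_k;w}$ persists, and the preceding paragraph applied at $w$ finishes the argument. To find this sequence, I would proceed by descent on $\sum_{i\in[1,n]\setminus I}f_{i}$: by Theorem~\ref{thm:mutation-inv} the identities $(z_j\|z_p)_F=0$ are preserved under mutation at any $k\in[1,n]\setminus I$ (since $z_j=x_{i_j;\mu_k(t)}$ remains a cluster variable in the new seed), so the $f$-vector continues to vanish on $I$ at every stage; and the terminal step — a single nonzero entry $f_k$ with $k\notin I$ — is handled directly by Corollary~\ref{cor:f-vector}(iii).

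The main obstacle is guaranteeing that at each intermediate stage a strictly reducing direction $k\in[1,n]\setminus I$ exists. My plan is to fix any cluster ${\bf x}_{w_0}$ containing $z_p$ (which exists since $z_p$ is a cluster variable) and reroute the unique $t\to w_0$ path in $\mathbb T_n$ so that all of its mutation directions lie in $[1,n]\setminus I$; the vanishing $f_{i;t}=0$ for $i\in I$ is precisely what permits avoiding $I$ in this rerouting. Formalizing the rerouting amounts to a mild structural statement on the exchange graph, namely connectivity of the subgraph of clusters containing the fixed set $\{z_1,\dots,z_{p-1}\}$, or equivalently that the ``freezing'' of the coordinates in $I$ produces a subordinate cluster pattern of rank $n-(p-1)$ in which $z_p$ remains a cluster variable with $f$-vector equal to the restriction of ${\bf f}_t$ to $[1,n]\setminus I$.
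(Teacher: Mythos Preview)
Your inductive framework and the observation (via Proposition~\ref{pro:f-degree}) that the $f$-vector of $z_p$ at the seed $t$ vanishes on $I$ are both correct. The genuine gap is the step you yourself flag as ``the main obstacle'': producing a mutation sequence from $t$ using only directions in $[1,n]\setminus I$ that terminates at a seed where $z_p$ has zero $f$-vector. Your proposed descent on $\sum_{i\notin I} f_{i}$ does not work, because $f$-vector components have no monotonicity under mutation in general; nothing in the paper guarantees a strictly reducing direction at each stage, and Corollary~\ref{cor:f-vector}(iii) only handles the terminal configuration, not the approach to it. Your fallback plan---rerouting the path $t\to w_0$ so as to avoid $I$, or equivalently invoking connectivity of the subgraph of clusters containing $\{z_1,\dots,z_{p-1}\}$, or that $z_p$ remains a cluster variable after freezing $I$---is not a ``mild structural statement'': each of these is essentially equivalent to (or at least as deep as) the very compatibility criterion you are trying to prove. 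You have relocated the difficulty rather than resolved it.

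The paper does not attempt a self-contained argument here. Its proof is a two-line reduction: by Remark~\ref{rmk:f-degree} one has $(z_i\mid\mid z_j)_F=s_i(z_i\mid\mid z_j)_f$, so the hypothesis forces the Fu--Gyoda $f$-compatibility degree $(z_i\mid\mid z_j)_f$ to vanish for all $i,j$, and then \cite[Theorem~4.18]{Fu-Gyoda} (which states precisely that pairwise $f$-compatible unfrozen cluster variables lie in a common cluster) is invoked as a black box. The structural work you are trying to carry out is exactly what that external theorem supplies.
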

\begin{proof}
By Lemma \ref{lem:xz=0}, we know that any two cluster variables in $\{z_1,\ldots,z_p\}$ are contained in the same cluster. Then the result follows from \cite[Theorem 13]{cao-li-2020}.
\end{proof}

The following result says that $F$-invariant is a very concrete and easily computable numerical invariant to distinguish whether the product of two cluster monomials is still a cluster monomial.

\begin{theorem}
    \label{pro:uu'}
Let $u$ and $u'$ be two cluster monomials in $\mathcal U$. Then the product $u u'$ is still a cluster monomial in $\mathcal U$ if and only if $(u\mid\mid u')_F=0$,  i.e., $$F_u^t[(S\mid {\bf 0}){\bf g}_{u'}^t]+F_{u'}^t[(S\mid {\bf 0}){\bf g}_u^t]=0,$$ 
for some (equivalently, any) vertex $t\in\mathbb T_n$.
\end{theorem}

\begin{proof}
``$\Longrightarrow$": Suppose that the product $uu'$ is  a cluster monomial in $\mathcal U$. Say, $uu'$ is a cluster monomial in cluster ${\bf x}_t$. Then both $u$ and $u'$ can be viewed as cluster monomials in ${\bf x}_t$. Thus $F_u^t=1=F_{u'}^t$. So we have $(u\mid\mid u')_F=F_u^t[(S\mid {\bf 0}){\bf g}_{u'}^t]+F_{u'}^t[(S\mid {\bf 0}){\bf g}_u^t]=0$.

``$\Longleftarrow$": Suppose  $(u\mid\mid u')_F=0$. We need to show that $uu'$ is a cluster monomial in $\mathcal U$. Since $u$ and $u'$ are cluster monomials in $\mathcal U$, we can assume $u={\bf x}_{s}^{\bf h}$ and $u'={\bf x}_{s'}^{\bf r}$, where $s,s'\in\mathbb T_n$ and ${\bf h}=(h_1,\ldots,h_m)^T, {\bf r}=(r_1,\ldots,r_m)^T\in\mathbb Z^m$ with $h_i,r_i\geq 0$ for any $i\in[1,n]$.

By Proposition \ref{pro:ghkk} and Proposition \ref{pro:bigood}, we have
$$(u\mid\mid u')_F=\sum_{i=1}^n\sum_{j=1}^nh_ir_j(x_{i;s}\mid\mid x_{j;s'})_F.$$ Since all the numbers $h_i,r_j$ and $(x_{i;s}\mid\mid x_{j;s'})_F$ are non-negative for $i,j\in[1,n]$, the assumption $(u\mid\mid u')_F=0$ implies  $h_ir_j(x_{i;s}\mid\mid x_{j;s'})_F=0$ for any $i,j\in[1,n]$.
Hence, $(x_{i;s}\mid\mid x_{j;s'})_F=0$ for any $i,j\in[1,n]$ with $h_ir_j\neq 0$.

Let $V=\{x_{i;s}\mid i\in[1,n], h_i\neq 0\}$ and  $V'=\{x_{j;s'}\mid j\in[1,n], r_j\neq 0\}$. We have  $(z\mid\mid z')_F=0$ for any $z\in V$ and $z'\in V'$. Since $V$ and $V'$ are subsets of clusters of $\mathcal U$, we have $$(z_1\mid\mid z_2)_F=0\;\;\;\text{and}\;\;\; (z_1'\mid\mid z_2')_F=0$$ for any $z_1,z_2\in V$ and $z_1',z_2'\in V'$.

Hence, $(z\mid\mid z')_F=0$ holds for
any two cluster variables $z,z'$ in $V\cup V'$.  Then by Lemma \ref{lem:cluster}, we know that $V\cup V'$ is a subset of some cluster ${\bf x}_t$ of $\mathcal U$. Hence, the product $uu'$ is a cluster monomial in ${\bf x}_t$.
\end{proof}

\begin{example}\label{ex:A2-3}
    Let continue with  Example \ref{ex:A2-2}. Take $\Lambda=\begin{bmatrix}
    0&1\\-1&0
\end{bmatrix}= \widetilde B$. We know that $\widetilde B^T\Lambda=(S\mid {\bf 0})=I_2$ and the canonical expressions of $x_3,x_4,x_5$ with respect to the initial seed $t_0=({\bf x},\widetilde B)$ are given as follows:
\[x_3={x_1^{-1}x_2\cdot(1+\widehat y_1),}\;\;\;  x_4={x_1^{-1}\cdot (1+\widehat y_1+\widehat y_1\widehat y_2),}\;\;\;
    x_5={x_2^{-1}\cdot (1+\widehat y_2)}.\]
Thus we have
\begin{align}
    (x_3\mid\mid x_4)_F&=F_{x_3}^{t_0}[(S\mid{\bf 0}){\bf g}_{x_4}^{t_0}]+F_{x_4}^{t_0}[(S\mid{\bf 0}){\bf g}_{x_3}^{t_0}]\nonumber\\
    &=(1+y_1)\begin{bmatrix}
        -1\\0
    \end{bmatrix}+(1+y_1+y_1y_2)\begin{bmatrix}
        -1\\
        1
    \end{bmatrix}\nonumber\\
    &=\max\{0,\;-1\}+\max\{0,\;-1,\;0\}\nonumber\\
    &=0,\nonumber\\
    (x_3\mid\mid x_5)_F&=(1+y_1)\begin{bmatrix}
        0\\
        -1
    \end{bmatrix}+(1+y_2)\begin{bmatrix}
        -1\\
        1
    \end{bmatrix}\nonumber\\
    &=\max\{0,\;0\}+\max\{0,\;1\}\nonumber\\
    &=1.\nonumber
\end{align}
Then by Theorem \ref{pro:uu'}, we know that $x_3x_4$ is a cluster monomial, while $x_3x_5$ is not.
\end{example}

Recall that the {\em Newton polytope} of a  polynomial $F=\sum_{{\bf v}\in\mathbb N^n}c_{{\bf v}}{\bf y}^{{\bf v}}\in\mathbb Z[y_1,\ldots,y_n]$ is
 the convex hull of $\{{\bf v}\in\mathbb N^n\mid c_{\bf v}\neq 0\}$ in $\mathbb R^n$.

\begin{corollary}
Let $u$ and $u'$ be two cluster monomials in $\mathcal U$ and $t$ a vertex of $\mathbb T_n$.  If the two $F$-polynomials  $F_u^t$ and $F_{u'}^t$ have the same Newton polytope, then the product $uu'$ is a cluster monomial of $\mathcal U$.
\end{corollary}

\begin{proof}
Since $u$ and $u'$ are cluster monomials and by Theorem \ref{thm:mutation-inv} (iii), we have
\[
(u\mid\mid u)_F=2F_u^t[(S\mid {\bf 0}){\bf g}_u^t]=0\;\;\;\text{and}\;\;\;(u'\mid\mid u')_F=2F_{u'}^t[(S\mid {\bf 0}){\bf g}_{u'}^t]=0.
\]
We get $F_u^t[(S\mid {\bf 0}){\bf g}_u^t]=0=F_{u'}^t[(S\mid {\bf 0}){\bf g}_{u'}^t]$.
Since $F_u^t$ and $F_{u'}^t$ have the same Newton polytope, we have $F_u^t[{\bf h}]=F_{u'}^t[{\bf h}]$ for any ${\bf h}\in\mathbb Z^n$. Hence,
we have
\begin{align}
(u\mid\mid u')_F&=F_u^t[(S\mid {\bf 0}){\bf g}_{u'}^t]+F_{u'}^t[(S\mid {\bf 0}){\bf g}_u^t]\nonumber\\
&=F_{u'}^t[(S\mid {\bf 0}){\bf g}_{u'}^t]+F_u^t[(S\mid {\bf 0}){\bf g}_u^t]=0.\nonumber
\end{align}
Then by Theorem \ref{pro:uu'}, we know that the product $uu'$ is  a cluster monomial.
\end{proof}

\begin{definition}\label{def:sign-coherent}
Two tropical points $[{\bf g}]=\{{\bf g}^t\in\mathbb Z^m\mid t\in\mathbb T_n\}$ and  $[{\bf g}']=\{({\bf g}')^t\in\mathbb Z^m\mid t\in\mathbb T_n\}$ in $\mathcal S_Y(\mathbb Z^{\rm max})$
are said to be {\em sign-coherent}, if for any vertex $t\in\mathbb T_n$ and any $k\in[1,n]$, we have $g_{k}^t\cdot (g_{k}')^t\geq 0$, where $g_{k}^t$ and $(g_{k}')^t$ are the $k$th components of  ${\bf g}^t$ and $({\bf g}')^t$.
\end{definition}
\begin{lemma}\label{lem:sum}
Let $[{\bf g}]=\{{\bf g}^t\in\mathbb Z^m\mid t\in\mathbb T_n\}$ and  $[{\bf g}']=\{({\bf g}')^t\in\mathbb Z^m\mid t\in\mathbb T_n\}$ be two tropical points in $\mathcal S_Y(\mathbb Z^{\rm max})$. If $[{\bf g}]$ and $[{\bf g}']$ are sign-coherent, then the collection $\{{\bf g}^t+({\bf g}')^t\in\mathbb Z^m\mid t\in\mathbb T_n\}$ is still a tropical point in $\mathcal S_Y(\mathbb Z^{\rm max})$. We denote it by $[{\bf g}\uplus {\bf g}']$.
\end{lemma}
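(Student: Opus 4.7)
The plan is to invoke Lemma \ref{lem:g-mutation} as the characterization of tropical points in $\mathcal S_Y(\mathbb Z^T)$: I must check that for every edge $\begin{xy}(0,1)*+{t}="A",(10,1)*+{t'}="B",\ar@{-}^k"A";"B"\end{xy}$ in $\mathbb T_n$, the putative collection ${\bf h}_t := {\bf g}_t + {\bf g}'_t$ satisfies ${\bf h}_{t'} = E_{k,\varepsilon(h_{k;t})}^{\widetilde B_t}\,{\bf h}_t$. Since $[{\bf g}]$ and $[{\bf g}']$ are already tropical points, I know ${\bf g}_{t'} = E_{k,\varepsilon(g_{k;t})}^{\widetilde B_t}\,{\bf g}_t$ and ${\bf g}'_{t'} = E_{k,\varepsilon(g'_{k;t})}^{\widetilde B_t}\,{\bf g}'_t$, so the problem reduces to a linear-algebraic comparison of the three matrices $E_{k,\pm}^{\widetilde B_t}$ applied to the respective vectors.

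The key observation I would record first is that the matrices $E_{k,+}^{\widetilde B_t}$ and $E_{k,-}^{\widetilde B_t}$ differ only in the $k$th column, so for any vector ${\bf v}\in\mathbb Z^m$ with $v_k = 0$ we have $E_{k,+}^{\widetilde B_t}{\bf v} = E_{k,-}^{\widetilde B_t}{\bf v}$. Thus the sign $\varepsilon(v_k)$ is only felt when $v_k \neq 0$.

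Next I would split the verification by the sign pattern of $(g_{k;t}, g'_{k;t})$, which by the compatibility hypothesis $g_{k;t}\cdot g'_{k;t}\geq 0$ must lie in one of the following three regimes: (i) both $\geq 0$, (ii) both $\leq 0$ with at least one strictly negative, or (iii) one is zero and the other has arbitrary allowed sign. In regime (i) both $\varepsilon(g_{k;t})$ and $\varepsilon(g'_{k;t})$ equal $-$, while $h_{k;t}\geq 0$ forces $\varepsilon(h_{k;t})=-$, so linearity of $E_{k,-}^{\widetilde B_t}$ gives ${\bf g}_{t'}+{\bf g}'_{t'}=E_{k,-}^{\widetilde B_t}{\bf h}_t$, as required. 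Regime (ii) with both strictly negative is handled identically with $\varepsilon = +$ throughout. In the mixed regime where (say) $g_{k;t}=0$ and $g'_{k;t}<0$, we have $\varepsilon(g_{k;t})=-$ but $\varepsilon(g'_{k;t})=+=\varepsilon(h_{k;t})$; the key observation from the previous paragraph lets me replace $E_{k,-}^{\widetilde B_t}{\bf g}_t$ by $E_{k,+}^{\widetilde B_t}{\bf g}_t$ at no cost, after which linearity closes the argument. The remaining mixed subcases are symmetric.

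I do not expect any serious obstacle: the content of the lemma is the sign-coherent behavior of the mutation formula \eqref{eqn:y-trop}, and the $E$-matrix reformulation in Lemma \ref{lem:g-mutation} makes the additive structure transparent. The only mild subtlety worth stating explicitly is the mixed zero case, which is why I single out the observation $E_{k,+}^{\widetilde B_t}{\bf v}=E_{k,-}^{\widetilde B_t}{\bf v}$ when $v_k=0$ at the outset.
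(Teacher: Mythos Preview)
Your argument is correct and, in substance, matches the paper's one-line proof, which simply says to verify the relations in \eqref{eqn:y-trop} and declares this ``clear.'' You have chosen to work through the equivalent $E$-matrix characterization of Lemma~\ref{lem:g-mutation} and do an explicit sign case-split; the paper's implicit direct check of \eqref{eqn:y-trop} amounts to the single identity $[a]_+ + [b]_+ = [a+b]_+$ whenever $ab\geq 0$, which collapses your three regimes into one line, but the content is the same.
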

\begin{proof}
It suffices to show that the collection  $\{{\bf g}^t+({\bf g}')^t\in\mathbb Z^m\mid t\in\mathbb T_n\}$ satisfies the relations in \eqref{eqn:y-trop}. Thanks to the sign-coherence assumption, this is clear.
\end{proof}

\begin{theorem}\label{thm:sign-coherent}
Let $u$ be a $[{\bf g}]$-good element in $\mathcal U$ and $u'$  a $[{\bf g}']$-good element in $\mathcal U$. Suppose that $(u\mid\mid u')_F=0$. Then the following statements hold.
\begin{itemize}
\item[(i)] The two tropical points $[{\bf g}]$ and $[{\bf g}']$ are sign-coherent. In particular, the tropical point $[{\bf g}\uplus{\bf g}']$ can be defined.
\item[(ii)] The product $uu'$ is a $[{\bf g}\uplus{\bf g}']$-good element in $\mathcal U$.
\end{itemize}
\end{theorem}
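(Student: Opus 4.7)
The plan is to exploit the mutation-invariance of the $F$-invariant (Theorem \ref{thm:mutation-inv}): the hypothesis $(u_{[{\bf g}]}\mid\mid u_{[{\bf g}']})_F=0$ gives, at \emph{every} vertex $t\in\mathbb T_n$, the equality
\[
F_t[(S\mid{\bf 0}){\bf g}'_t]+F'_t[(S\mid{\bf 0}){\bf g}_t]=0,
\]
where $F_t$ and $F'_t$ are the $F$-polynomials of $u_{[{\bf g}]}$ and $u_{[{\bf g}']}$ with respect to $({\bf x}_t,\widetilde B_t)$. Because $F_t,F'_t$ have nonnegative integer coefficients and constant term $1$, Remark \ref{rmk:g0} makes each summand nonnegative, so the hypothesis in fact forces both $F_t[(S\mid{\bf 0}){\bf g}'_t]=0$ and $F'_t[(S\mid{\bf 0}){\bf g}_t]=0$ at every $t$.

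For part (i), I fix $t\in\mathbb T_n$ and $k\in[1,n]$ and show $g_{k;t}g'_{k;t}\geq 0$. If $g_{k;t}<0$, then by Corollary \ref{cor:deg2}(ii) the monomial $y_k^{-g_{k;t}}$ appears in $F_t$ with coefficient $1$, so plugging ${\bf v}=-g_{k;t}\,{\bf e}_k$ into the definition of $F_t[\,\cdot\,]$ yields
\[
0=F_t[(S\mid{\bf 0}){\bf g}'_t]\;\geq\;{\bf v}^T(S\mid{\bf 0}){\bf g}'_t=-g_{k;t}\,s_k\,g'_{k;t};
\]
since $-g_{k;t}\,s_k>0$, this forces $g'_{k;t}\leq 0$, hence $g_{k;t}g'_{k;t}\geq 0$. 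If instead $g_{k;t}>0$, I pass to the adjacent vertex $t'$ joined to $t$ by the edge labelled $k$: by \eqref{eqn:y-trop} applied to both tropical points, $g_{k;t'}=-g_{k;t}<0$ and $g'_{k;t'}=-g'_{k;t}$, so the previous case at $t'$ gives $g'_{k;t'}\leq 0$, i.e.\ $g'_{k;t}\geq 0$. The remaining case $g_{k;t}=0$ is trivial. This establishes the compatibility of $[{\bf g}]$ and $[{\bf g}']$, and Lemma \ref{lem:sum} then produces the tropical point $[{\bf g}\uplus{\bf g}']$.

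For part (ii), $u_{[{\bf g}]}u_{[{\bf g}']}\in\mathcal U$, and by Lemma \ref{lem:qin1}(i) it is pointed at every seed $({\bf x}_t,\widetilde B_t)$ with degree ${\bf g}_t+{\bf g}'_t$, which is precisely the value of $[{\bf g}\uplus{\bf g}']$ at $t$. The Laurent expansion at vertex $t$ factors as
\[
u_{[{\bf g}]}u_{[{\bf g}']}={\bf x}_t^{{\bf g}_t+{\bf g}'_t}\,(F_tF'_t)(\hat y_{1;t},\ldots,\hat y_{n;t}),
\]
and $F_tF'_t\in\mathbb Z_{\geq 0}[y_1,\ldots,y_n]$ has constant term $1$. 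Hence $u_{[{\bf g}]}u_{[{\bf g}']}$ is $[{\bf g}\uplus{\bf g}']$-good. The main obstacle is part (i), namely extracting sign-coherence of the $g$-vectors from the vanishing of the $F$-invariant; the crucial input is that Corollary \ref{cor:deg2}(ii) \emph{forces} the extremal-in-$y_k$ monomial $y_k^{-g_{k;t}}$ into $F_t$ whenever $g_{k;t}<0$, turning the global condition $F_t[(S\mid{\bf 0}){\bf g}'_t]=0$ into a pointwise sign constraint. With (i) in hand, part (ii) is routine multiplicativity of $\mathbb Z_{\geq 0}$-polynomials.
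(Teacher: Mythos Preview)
Your proof is correct and follows essentially the same approach as the paper: both use mutation-invariance of the $F$-invariant and Corollary \ref{cor:deg2}(ii) to force the monomial $y_k^{[-g_{k;t}]_+}$ into $F_t$, then derive the sign constraint on $g'_{k;t}$. The only cosmetic difference is that the paper handles the case $g_{k;t}>0$ by swapping the roles of $u_{[{\bf g}]}$ and $u_{[{\bf g}']}$ (a WLOG symmetry argument), whereas you pass to the adjacent vertex $t'$ along the edge $k$; both reductions are equally direct, and part (ii) is identical.
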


\begin{proof}
(i) We write $[{\bf g}]=\{{\bf g}_u^t=(g_{1;u}^t,\ldots,g_{m;u}^t)^T\in\mathbb Z^m\mid t\in\mathbb T_n\}$ and  $[{\bf g}']=\{{\bf g}_{u'}^t=(g_{1;u'}^t,\ldots,g_{m;u'}^t)^T\in\mathbb Z^m\mid t\in\mathbb T_n\}$.
 Assume the contrary  that the two tropical points $[{\bf g}]$ and $[{\bf g}']$ are not sign-coherent to deduce a contradiction.
Then there exist some vertex $s\in\mathbb T_n$ and some $k\in[1,n]$ such that $g_{k;u}^s\cdot g_{k;u'}^s<0$. Without loss of generality, we can assume $g_{k;u}^s<0$ and $g_{k;u'}^s>0$. 

By Corollary \ref{cor:deg2}, we know that the monomial ${\bf y}^{[-g_{k;u}^s]_+{\bf e}_k}=y_k^{[-g_{k;u}^s]_+}$ appears in the $F$-polynomial $F_u^s\in\mathbb Z_{\geq 0}[y_1,\ldots,y_n]$ with coefficient $1$, where ${\bf e}_k$ is the $k$th column of $I_n$. Thus we have $$F_u^s[(S\mid {\bf 0}){\bf g}_{u'}^s]\geq ([-g_{k;u}^s]_+{\bf e}_k)^T(S\mid {\bf 0}){\bf g}_{u'}^s=[-g_{k;u}^s]_+\cdot s_kg_{k;u'}^s.$$
Then by $g_{k;u}^s<0$, $g_{k;u'}^s>0$ and $s_k>0$, we get $F_u^s[(S\mid {\bf 0}){\bf g}_{u'}^s]>0$.

By using the cluster ${\bf x}_s$ to calculate the $F$-invariant $(u\mid\mid u')_F$, we have
$$(u\mid\mid u')_F=F_u^s[(S\mid {\bf 0}){\bf g}_{u'}^s]+F_{u'}^s[(S\mid {\bf 0}){\bf g}_u^s]\geq F_u^s[(S\mid {\bf 0}){\bf g}_{u'}^s]>0,$$
which contradicts $(u\mid\mid u')_F=0$. Hence, the two tropical points $[{\bf g}]$ and $[{\bf g}']$ must be sign-coherent.

(ii) For any vertex $t$, consider the Laurent expansions of  $u$  and  $u'$ with respect to vertex $t$:
$$u={\bf x}_t^{{\bf g}_u^t}F_u^t(\hat y_{1;t},\ldots,\hat y_{n;t}),\;\;\; u'={\bf x}_t^{{\bf g}_{u'}^t}F_{u'}^t(\hat y_{1;t},\ldots,\hat y_{n;t}),$$
where $F_u^t$ and $F_{u'}^t$ are polynomials in $\mathbb Z_{\geq 0}[y_1,\ldots,y_n]$ with constant term $1$. Hence, we have
$$uu'={\bf x}_t^{{\bf g}_u^t+{\bf g}_{u'}^t}H_{uu'}^t(\hat y_{1;t},\ldots,\hat y_{n;t}),$$
where $H_{uu'}^t \coloneqq   F_u^tF_{u'}^t$ is a polynomial in $\mathbb Z_{\geq 0}[y_1,\ldots,y_n]$ with constant term $1$.

By (i) and Lemma \ref{lem:sum}, we know that $[{\bf g}\uplus{\bf g}'] \coloneqq   \{{\bf g}_u^t+{\bf g}_{u'}^t\in\mathbb Z^m\mid t\in\mathbb T_n\}$ forms a tropical point in $\mathcal S_Y(\mathbb Z^{\rm max})$. Then by the definition of good elements, we know that the product $uu'$ is a $[{\bf g}\uplus{\bf g}']$-good element in $\mathcal U$.
\end{proof}

\subsection{Log-canonical cluster variables are compatible}\label{sec43}
In this subsection, we show that if two cluster variables are log-canonical in a $\mathsf{\Lambda}$-upper cluster algebra, then they are contained in the same cluster.

As before, we fix a Langlands-Poisson triple $(\mathcal S_X,\mathcal S_Y,\mathsf{\Lambda})$, and let $\mathcal U$ be the corresponding $\mathsf{\Lambda}$-upper cluster algebra. Denote by $S=diag(s_1,\ldots,s_n)$ the type of the compatible pair $(\widetilde B_{t_0},\Lambda_{t_0})$, namely, $\widetilde B_{t_0}^T\Lambda_{t_0}=(S\mid {\bf 0})$, where ${\bf 0}$ is the $n\times (m-n)$ zero matrix.

Recall that we can define a Poisson bracket $\{-,-\}$ on the ambient field $\mathbb F=\mathbb Q(x_{1;t_0},\ldots,x_{m;t_0})$ using the $m\times m$ skew-symmetric matrix $\Lambda_{t_0}=(\lambda_{ij;t_0})$ at the rooted vertex $t_0$:
$$\{x_{i;t_0},x_{j;t_0}\} \coloneqq   \lambda_{ij;t_0}\cdot x_{i;t_0}x_{j;t_0}.$$
This Poisson bracket is compatible with the cluster pattern $\mathcal S_X$, that is, for any cluster ${\bf x}_t$ of $\mathcal S_X$, we have $$\{x_{i;t},x_{j;t}\}=\lambda_{ij;t}\cdot x_{i;t}x_{j;t},$$
where $\lambda_{ij;t}$ is the $(i,j)$-entry of $\Lambda_t$, \confer \cite{gsv-2003}, \cite[Remark 4.6]{bz-2005}.

Two elements $f_1,f_2$ in $\mathbb F$ are said to be {\em log-canonical} with respect to the Poisson bracket $\{-,-\}$, if $$\{f_1,f_2\}=cf_1f_2$$ for some $c\in\mathbb Q$.

Clearly, any two cluster variables in the same cluster are log-canonical. In  Theorem \ref{thm:log} (ii), we show the converse statement is also true.

\begin{lemma}\label{lem:f=0}
Let $w$ be a vertex of $\mathbb T_n$ and $u$ an element in $\mathbb F$ with the form
$$u={\bf x}_w^{\bf g}F(\hat y_{1;w},\ldots,\hat y_{n;w}),$$
where $F=\sum_{{\bf v}\in\mathbb N^n}c_{{\bf v}}{\bf y}^{{\bf v}}$ is a polynomial in $\mathbb Z[y_1,\ldots,y_n]$ with constant term $1$.
 If $u$ and $x_{k;w}$ are log-canonical for some $k\in[1,n]$, then $f_{k}=0$, where $f_k$ is the maximal degree of $y_{k}$ in $F$.
\end{lemma}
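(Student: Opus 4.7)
The plan is to expand out the Poisson bracket $\{u,x_{k;w}\}$ using the log-canonical Poisson structure on cluster variables, exploit the compatibility relation $\widetilde B_w^T\Lambda_w=(S\mid {\bf 0})$ to simplify brackets involving $\hat{y}$-variables, and then compare constant terms to conclude that $F$ does not depend on $y_k$.

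First, I would record the general fact that whenever $\{x_{i;w},x_{j;w}\}=\lambda_{ij;w}x_{i;w}x_{j;w}$, the Leibniz rule yields $\{{\bf x}_w^{\bf h},x_{k;w}\}=({\bf h}^T\Lambda_w{\bf e}_k)\,{\bf x}_w^{\bf h}\,x_{k;w}$ for every ${\bf h}\in\ZZ^m$, where ${\bf e}_k$ denotes the $k$th standard basis vector in $\RR^m$. Applied to ${\bf x}_w^{\bf g}$ this gives a constant $c_0:={\bf g}^T\Lambda_w{\bf e}_k$, and applied to $\widehat{\bf y}_w^{\bf v}={\bf x}_w^{\widetilde B_w{\bf v}}$ it gives the coefficient $(\widetilde B_w{\bf v})^T\Lambda_w{\bf e}_k={\bf v}^T(\widetilde B_w^T\Lambda_w){\bf e}_k={\bf v}^T(S\mid{\bf 0}){\bf e}_k=s_kv_k$, since $k\in[1,n]$. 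Writing $F=\sum_{\bf v}c_{\bf v}{\bf y}^{\bf v}$ and summing, I obtain
\[
\{u,x_{k;w}\}=c_0\,u\,x_{k;w}+s_k\,{\bf x}_w^{\bf g}\,x_{k;w}\sum_{\bf v}c_{\bf v}v_k\,\widehat{\bf y}_w^{\bf v}.
\]

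Second, I would invoke the log-canonicality hypothesis $\{u,x_{k;w}\}=c\,u\,x_{k;w}$ and cancel ${\bf x}_w^{\bf g}x_{k;w}$ to get the identity
\[
(c-c_0)\,F(\widehat{\bf y}_w)=s_k\sum_{\bf v}c_{\bf v}v_k\,\widehat{\bf y}_w^{\bf v}=s_k\,(y_k\partial_{y_k}F)(\widehat{\bf y}_w).
\]
Since $\Lambda_w$ is a compatible partner of $\widetilde B_w$, Proposition \ref{pro:fullrank} gives that $\widetilde B_w$ has full rank $n$, so the Laurent monomials $\widehat{\bf y}_{1;w},\ldots,\widehat{\bf y}_{n;w}$ are algebraically independent over $\QQ$. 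Thus the displayed identity lifts to a genuine polynomial identity in $\ZZ[y_1,\ldots,y_n]$: $(c-c_0)F=s_k\,y_k\,\partial_{y_k}F$.

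Third, I would compare constant terms: the constant term of $F$ is $1$ while $y_k\partial_{y_k}F$ has vanishing constant term, so $c-c_0=0$. Substituting back yields $s_k\,y_k\,\partial_{y_k}F=0$, and since $s_k>0$ this forces $\partial_{y_k}F=0$, i.e.\ $F$ is independent of $y_k$. Hence $f_{k;w}=0$, as claimed. The only point that needs care is making sure the passage from an identity in the $\widehat{\bf y}_w$ variables to an identity in the formal variables $y_i$ is legitimate, and this is exactly where the full-rank consequence of the compatible-pair assumption enters; once that is in place the argument is purely a Leibniz-rule computation plus extraction of constant terms.
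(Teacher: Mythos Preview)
Your argument is correct and follows essentially the same route as the paper: expand $\{u,x_{k;w}\}$ via the Leibniz rule, use the compatibility relation $\widetilde B_w^T\Lambda_w=(S\mid {\bf 0})$ to see that $\{\widehat{\bf y}_w^{\bf v},x_{k;w}\}=s_kv_k\,\widehat{\bf y}_w^{\bf v}x_{k;w}$, equate with $c\,u\,x_{k;w}$, and compare constant terms. The only cosmetic differences are that the paper argues by contradiction (assuming $f_{k;w}\neq 0$ and deriving incompatible constant terms) whereas you proceed directly and obtain the slightly sharper conclusion $\partial_{y_k}F=0$, and that you make explicit the use of full rank (Proposition~\ref{pro:fullrank}) to lift the identity from the $\widehat{\bf y}_w$ variables to the formal variables $y_1,\ldots,y_n$, a step the paper leaves implicit.
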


\begin{proof}
We know that $\Lambda_w=(\lambda_{ij;w})$ is the coefficient matrix of $\{-,-\}$ with respect to the cluster ${\bf x}_w$. Since $(\widetilde B_w,\Lambda_w)$ is a compatible pair of type $S$, we have $\widetilde B_w^T\Lambda_w=(S\mid {\bf 0})$. For $i\in[1,n]$ and $j\in[1,m]$, we denote by ${\bf e}_i$ the $i$th column of $I_n$ and $\widetilde {\bf e}_j$ the $j$th column of $I_m$. We have
\begin{align}
\{\hat y_{i;w},x_{j;w}\}&=\{{\bf x}_w^{\widetilde B_w{\bf e}_i},{\bf x}_w^{\widetilde {\bf e}_j}\}=\left((\widetilde B_w{\bf e}_i)^T\Lambda_w\widetilde {\bf e}_j\right)\hat y_{i;w}x_{j;w}
\nonumber\\
&=\left({\bf e}_i^T(\widetilde B_w)^T\Lambda_w\widetilde {\bf e}_j\right)\hat y_{i;w}x_{j;w}=\left({\bf e}_i^T(S\mid {\bf 0})\widetilde {\bf e}_j\right)\hat y_{i;w}x_{j;w}\nonumber\\
&=\begin{cases}s_j\cdot \hat y_{j;w}x_{j;w},&i=j;\\
0,&i\neq j.
\end{cases}\nonumber
\end{align}
Now we consider the integer $k\in[1,n]$ such that $u$ and $x_{k;w}$ are log-canonical. For ${\bf v}=(v_1,\ldots,v_n)^T\in\mathbb N^n$, we have
$$\{\widehat {\bf y}_w^{\bf v},x_{k;w}\}=s_k(v_k\widehat {\bf y}_w^{\bf v})x_{k;w}.$$
For simplicity, we write $F(\widehat {\bf y}_w)=F(\hat y_{1;w},\ldots,\hat y_{n;w})$.
We have $$\{F(\widehat {\bf y}_w),x_{k;w}\}=\{\sum_{{\bf v}\in\mathbb N^n}c_{{\bf v}}\widehat {\bf y}_w^{{\bf v}},x_{k;w}\}=s_k(\sum_{{\bf v}\in\mathbb N^n}c_{{\bf v}}v_k\widehat {\bf y}_w^{\bf v})x_{k;w}.$$
Now let us calculate $\{u,x_{k;w}\}=\{{\bf x}_w^{\bf g}F(\widehat {\bf y}_w),x_{k;w}\}$.
\begin{align}
\{{\bf x}_w^{\bf g}F(\widehat {\bf y}_w),x_{k;w}\}&=\{{\bf x}_w^{\bf g},x_{k;w}\}F(\widehat {\bf y}_{w})+{\bf x}_w^{\bf g}\{F(\widehat {\bf y}_{w}),x_{k;w}\}\nonumber\\
&=({\bf g}^T\Lambda_w\widetilde {\bf e}_k)\cdot {\bf x}_w^{\bf g}\cdot x_{k;w}\cdot F(\widehat {\bf y}_w)+{\bf x}_w^{\bf g}\cdot s_k(\sum_{{\bf v}\in\mathbb N^n}c_{{\bf v}}v_k\widehat {\bf y}_w^{\bf v})x_{k;w}.\nonumber
\end{align}
Since $u$ and $x_{k;w}$ are log-canonical, there exists some $c\in\mathbb Q$ such that
$$\{u,x_{k;w}\}=c\cdot ux_{k;w}=c\cdot {\bf x}_w^{\bf g}\cdot F(\widehat {\bf y}_w)\cdot x_{k;w}.$$
Thus we have the following equality in $\mathbb F$.
$$({\bf g}^T\Lambda_w\widetilde {\bf e}_k)\cdot {\bf x}_w^{\bf g}\cdot x_{k;w}\cdot F(\widehat {\bf y}_w)+{\bf x}_w^{\bf g}\cdot s_k(\sum_{{\bf v}\in\mathbb N^n}c_{{\bf v}}v_k\widehat {\bf y}_w^{\bf v})x_{k;w}= c\cdot {\bf x}_w^{\bf g}\cdot F(\widehat {\bf y}_w)\cdot x_{k;w}.$$
We get
\begin{eqnarray}\label{eqn:f=0}
s_k(\sum_{{\bf v}\in\mathbb N^n}c_{{\bf v}}v_k\widehat {\bf y}_w^{\bf v})=(c-{\bf g}^T\Lambda_w\widetilde {\bf e}_k)\cdot F(\widehat {\bf y}_w).
\end{eqnarray}

Assume by contradiction that the maximal degree $f_{k}$ of $y_k$ in $F=\sum_{{\bf v}\in\mathbb N^n}c_{{\bf v}}{\bf y}^{{\bf v}}$ is non-zero.
Then the left side of \eqref{eqn:f=0} is non-zero, which implies $c-{\bf g}^T\Lambda_w\widetilde {\bf e}_k\neq 0$. Now we compare the constant term on both sides of \eqref{eqn:f=0}. The constant term on the left side is $0$ while the constant term on the right side is $(c-{\bf g}^T\Lambda_w\widetilde {\bf e}_k)\cdot 1=c-{\bf g}^T\Lambda_w\widetilde {\bf e}_k\neq 0$. This is a contradiction. Hence, $f_{k}=0$.
\end{proof}

\begin{theorem}\label{thm:log}
Let  $u\in\mathcal U^{\gd}$ and $w\in\mathbb T_n$. The following statements hold.
\begin{itemize}
\item[(i)] If $u$ and $x_{k;w}$ are log-canonical for some $k\in[1,n]$, then  $(x_{k;w}\mid\mid u)_F=0$.

    \item[(ii)] If $u$ is a cluster monomial and it is log-canonical with $x_{k;w}$, then the product $u\cdot x_{k;w}$ is a cluster monomial of $\mathcal U$. In particular, if two cluster variables are log-canonical, then they are contained in the same cluster.
    \item[(iii)] If $u$ and $x_{k;w}$ are log-canonical for any $k\in[1,n]$, then $u$ is a cluster monomial in ${\bf x}_w$.
        \item[(iv)] Let  $({\bf x}_t,\widetilde B_t)=\mu_k({\bf x}_w,\widetilde B_w)$ with $k\in[1,n]$.
         Suppose that $u$ and $x_{k;w}$ are not log-canonical but $u$ and $x_{i;w}$ are log-canonical for any $i\in[1,n]\backslash\{k\}$,
             then $u$ is a cluster monomial in ${\bf x}_t$.
\end{itemize}
\end{theorem}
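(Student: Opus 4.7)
The central observation driving the proof is the identity $(u\mid\mid x_{k;w})_F = s_k f_{k;w}$, where $f_{k;w}$ denotes the $k$th component of the $f$-vector of $u$ at vertex $w$. This is immediate from Proposition \ref{pro:f-degree} combined with the symmetry $(u\mid\mid u')_F = (u'\mid\mid u)_F$ built into Definition \ref{def:f-invariant}. Once this dictionary between log-canonicality, $f$-vector components, and $F$-invariants is in place, the theorem reduces to assembling Lemma \ref{lem:f=0} with the earlier corollaries on $f$-vectors and cluster monomials.

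Parts (i)--(iii) then follow formally. For (i), Lemma \ref{lem:f=0} gives $f_{k;w} = 0$, hence $(u\mid\mid x_{k;w})_F = s_k f_{k;w} = 0$. For (ii), observing that $u$ and $x_{k;w}$ are both cluster monomials, part (i) combined with Proposition \ref{pro:uu'} yields that $u \cdot x_{k;w}$ is a cluster monomial. For (iii), applying (i) to every $k \in [1,n]$ produces $(x_{k;w}\mid\mid u)_F = 0$ for all $k$; part (ii) of the corollary following Proposition \ref{pro:f-degree} then forces $u$ to be a cluster monomial in $\mathbf{x}_w$.

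Part (iv) is the one requiring a slightly more delicate argument, and it is where I expect the only real work. The hypothesis on $i \in [1,n]\setminus\{k\}$ again gives $f_{i;w} = 0$ for each such $i$ via Lemma \ref{lem:f=0}. The plan is then to show $f_{k;w} \neq 0$ by contradiction: if $f_{k;w}$ were also zero, Corollary \ref{cor:f-vector} (ii) would force $u = \mathbf{x}_w^{\mathbf{g}_w}$ to be a Laurent monomial in the cluster $\mathbf{x}_w$, and a direct Poisson bracket computation using $\widetilde B_w^T\Lambda_w = (S\mid\mathbf{0})$ gives
\[
\{\mathbf{x}_w^{\mathbf{g}_w}, x_{k;w}\} = (\mathbf{g}_w^T \Lambda_w \mathbf{e}_k')\, \mathbf{x}_w^{\mathbf{g}_w}\, x_{k;w},
\]
so $u$ and $x_{k;w}$ would be log-canonical, contradicting the hypothesis. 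Hence $f_{k;w} \neq 0$ while $f_{i;w} = 0$ for $i \neq k$, and Corollary \ref{cor:f-vector} (iii) concludes that $u$ is a cluster monomial in $\mathbf{x}_t = \mu_k(\mathbf{x}_w)$. The subtlety to watch is that log-canonicality only requires the existence of \emph{some} scalar $c \in \mathbb{Q}$, and any Laurent monomial in a fixed cluster automatically satisfies this condition against every variable in that cluster; this is precisely what makes the contradiction in (iv) go through.
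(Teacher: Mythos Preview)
Your proof is correct and follows the same route as the paper, which simply cites Lemma \ref{lem:f=0}, Proposition \ref{pro:f-degree}, Proposition \ref{pro:uu'}, and Corollary \ref{cor:f-vector} for the four parts. Your treatment of (iv) is in fact more careful than the paper's: you explicitly verify $f_{k;w}\neq 0$ via the contrapositive (a Laurent monomial in $\mathbf{x}_w$ is automatically log-canonical with every $x_{k;w}$), which is needed to invoke Corollary \ref{cor:f-vector} (iii) but is left implicit in the paper's one-line proof.
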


\begin{proof}
(i) This follows from Lemma \ref{lem:f=0} and Proposition \ref{pro:f-degree}.

(ii) By (i), we have $(x_{k;u}\mid\mid u)_F=0$. Since $u$ is a cluster monomial and by Theorem \ref{pro:uu'}, we obtain that the product $u\cdot x_{k;w}$ is a cluster monomial of $\mathcal U$.

Both (iii) and (iv) follow from (i) and Corollary \ref{cor:xu=0}.
\end{proof}

\subsection{$F$-invariant for cluster algebras with trivial coefficients}
In this subsection we show that the notion of $F$-invariant for a pair of cluster monomials can be defined for any upper cluster algebra with trivial coefficients, 
regardless of whether it is a $\mathsf{\Lambda}$-upper cluster algebra.

Recall that in the process of our discovery and definition of $F$-invariant, we work on $\mathsf{\Lambda}$-(upper) cluster algebras and thus (upper) cluster algebras of full rank, because the $F$-invariant is defined as the symmetrized sum of tropical invariant 
$$(u\mid\mid u')_F=\beta_{u'}(u)+\beta_u(u')=\langle u,u'\rangle_{\rm trop}+\langle u',u\rangle_{\rm trop}$$
and the tropical invariant heavily depends on the Poisson coefficient matrices $\mathsf{\Lambda}=\{\Lambda_t\mid t\in\mathbb T_n\}$. However, if we look at Theorem  \ref{thm:mutation-inv} (ii), we know that $(u\mid\mid u')_F$ is given as follows:
 $$(u\mid\mid u')_F= F_u^w[(S\mid {\bf 0}){\bf g}_{u'}^w]+F_{u'}^w[(S\mid {\bf 0}){\bf g}_{u}^w],$$ where $w$ is any vertex in $\mathbb T_n$. This formula eliminates the dependence of $(u\mid\mid u')_F$ on the Poisson coefficient matrices $\mathsf{\Lambda}=\{\Lambda_t\mid t\in\mathbb T_n\}$. Thus we can define $F$-invariant for any (upper) cluster algebra with trivial coefficients whenever we have well-defined $g$-vectors and $F$-polynomials.  

The philosophy in this subsection is that for each (upper) cluster algebra with trivial coefficients there is a natural $\mathsf{\Lambda}$-(upper) cluster algebra behind it.

\begin{setting}\label{setting}
(i) Let $B$ be an $n\times n$ skew-symmetrizable matrix with a fixed skew-symmetrizer $S=diag(s_1,\ldots,s_n)$. Let ${\bf z}=(z_1,\ldots,z_n)$ and ${\bf x}=(x_1,\ldots,x_m)$ be the ordered sets of indeterminates, where $m=2n$.

(ii)  Set $\widetilde B=\begin{bmatrix}
    B\\ I_n
\end{bmatrix}$ and $\Lambda=\begin{bmatrix}
    {\bf 0}&-S\\
    S&-SB
\end{bmatrix}$. Since $\widetilde B^T\Lambda=(S\mid {\bf 0})$, the pair $(\widetilde B,\Lambda)$ forms a compatible pair of type $S$. 

(iii) Let $\mathcal U$ be the $\mathsf{\Lambda}$-upper cluster algebra with initial $\mathsf{\Lambda}$-seed $({\bf x},\widetilde B,\Lambda)$ and let $\mathcal U^\circ$ be the upper cluster algebra with trivial coefficients whose initial seed is $({\bf z},B)$. Denote by $({\bf x}_t,\widetilde B_t,\Lambda_t)$ the $\mathsf{\Lambda}$-seed of $\mathcal U$ at vertex $t\in\mathbb T_n$ and by  $({\bf z}_t, B_t)$ the seed of $\mathcal U^\circ$ at vertex $t\in\mathbb T_n$, where ${\bf x}_t=(x_{1;t},\ldots,x_{m;t})$ and ${\bf z}_t=(z_{1;t},\ldots,z_{n;t})$.

(iv) 
For a vector  ${\bf h}=(h_1,\ldots,h_{m})^T\in\mathbb Z^{m}$, denote by 
 ${\bf h}^\circ \coloneqq   (h_1,\ldots,h_n)^T\in\mathbb Z^n$ the {\em principal part} of ${\bf h}$.
\end{setting}

\begin{theorem}[Separation formula, \cite{fomin_zelevinsky_2007}]
 Keep the above setting. Let $u^\circ=\prod_{i=1}^nz_{i;t}^{a_i}$ be a cluster monomial in $\mathcal U^\circ$ and $u=\prod_{i=1}^nx_{i;t}^{a_i}$ the corresponding cluster monomial in $\mathcal U$. Denote by ${\bf g}_u^w\in\mathbb Z^m$ and $F_u^w\in\mathbb Z[y_1,\ldots,y_n]$ the extended $g$-vector and $F$-polynomial of $u$ with respect to vertex $w\in\mathbb T_n$. Then the Laurent expansion of $u^\circ$ with respect to seed $({\bf z}_w,B_w)$ has a cannonical expression in terms of ${\bf g}_u^w$ and $F_u^w$: 
 \[ u^\circ={\bf z}_w^{({\bf g}_u^w)^\circ}F_{u}^w({\bf z}_w^{B_w{\bf e}_1},\ldots,{\bf z}_w^{B_w{\bf e}_n}),
 \]
 where $({\bf g}_u^w)^\circ\in\mathbb Z^n$ is the principal part of ${\bf g}_u^w\in\mathbb Z^m$ and ${\bf e}_k$ is the $k$th column of $I_n$. 
\end{theorem}
\begin{definition}
    Keep the notation in the above theorem. We call ${\bf g}_{u^\circ}^w \coloneqq   ({\bf g}_u^w)^\circ\in\mathbb Z^n$ and $F_{u^\circ}^w \coloneqq   F_u^w\in\mathbb Z[y_1,\ldots,y_n]$ the {\em $g$-vector} and {\em $F$-polynomial} of the cluster monomial $u^\circ\in\mathcal U^\circ$ with respect to vertex $w\in\mathbb T_n$.
\end{definition}

In the following proposition, we prove that $F$-invariant can be naturally defined for upper cluster algebras with trivial coefficients.
\begin{proposition}\label{pro:trivial-coef}
Let $B$ be an $n\times n$ skew-symmetrizable matrix with a fixed skew-symmetrizer $S$. Let $\mathcal U^\circ$ be the upper cluster algebra with trivial coefficients whose initial seed is $({\bf z}, B)$. Let $u^\circ$ and $v^\circ$ be two cluster monomials in  $\mathcal U^\circ$. Then the integer $$(u^\circ\mid\mid v^\circ)_F \coloneqq   F_{u^\circ}^w[S{\bf g}_{v^\circ}^w]+F_{v^\circ}^w[S{\bf g}_{u^\circ}^w]$$ only depends on $u^\circ$ and $v^\circ$, not on the choice of the vertex $w\in\mathbb T_n$.   
\end{proposition}
\begin{proof}
Let $\mathcal U$ be the $\mathsf{\Lambda}$-upper cluster algebra as in the Setting \ref{setting} and let $u,v$ be the cluster monomials of $\mathcal U$ corresponding to the cluster monomials $u^\circ,v^\circ$ of $\mathcal U^\circ$. By Theorem \ref{thm:mutation-inv} (ii), we have
 $$(u\mid\mid v)_F= F_u^w[(S\mid {\bf 0}){\bf g}_{v}^w]+F_{v}^w[(S\mid {\bf 0}){\bf g}_{u}^w],$$ where $w$ is any vertex in $\mathbb T_n$.  Clearly, we have $(S\mid{\bf 0}){\bf g}_u^w=S({\bf g}_u^w)^\circ$ and $(S\mid{\bf 0}){\bf g}_{v}^w=S({\bf g}_{v}^w)^\circ$, where $({\bf g}_u^w)^\circ,\; ({\bf g}_{v}^w)^\circ\in\mathbb Z^n$ are the principal parts of ${\bf g}_u^w, \;{\bf g}_v^w\in\mathbb Z^m$. Thus 
 $$(u\mid\mid v)_F= F_u^w[S({\bf g}_{v}^w)^\circ]+F_{v}^w[S({\bf g}_{u}^w)^\circ].$$
 By definition, we know that ${\bf g}_{u^\circ}^w=({\bf g}_u^w)^\circ, \;{\bf g}_{v^\circ}^w=({\bf g}_v^w)^\circ$ and $F_{u^\circ}^w=F_u^w,\; F_{v^\circ}^w=F_v^w$.
 So we have
 $$(u\mid\mid v)_F=F_{u^\circ}^w[S{\bf g}_{v^\circ}^w]+F_{v^\circ}^w[S{\bf g}_{u^\circ}^w]=(u^\circ\mid\mid v^\circ)_F,$$
 which is independent of the choice of the vertex $w\in\mathbb T_n$.
\end{proof}

\begin{remark}
 The key ingredients to define $F$-invariant is the (extended) $g$-vectors and $F$-polynomials. The $g$-vectors and $F$-polynomials are natural concepts in the setting of (upper) cluster algebras of full rank or $\mathsf{\Lambda}$-(upper) cluster algebras. In this setting, we can even define $g$-vectors and $F$-polynomials for many elements beyond cluster monomials. 
 
 In contrast, when we work on (upper) cluster algebras with trivial coefficents, we need to restrict to cluster monomials in order to have a natural definition of $g$-vectors and $F$-polynomials. On the other hand, we see that during the process of defining $g$-vectors and $F$-polynomials for an upper cluster algebra with trivial coefficients, we lift to an upper cluster algebra of full rank.

\end{remark}

\section{Comparison with ${\Lambda}$/$\mathfrak{d}$-invariant in monoidal cluster categorification}\label{sec:monoidal}

\subsection{Monoidal categorification of cluster algebras}
Let $\mathbf{k}$ be a base field and let $\mathcal C$ be a $\mathbf{k}$-linear monoidal abelian category  such that any object of $\mathcal C$ has finite length. We denote by $[M]$ the isomorphism class of an object $M\in\mathcal C$ and by $\Irr(\mathcal C)$ the set of isomorphism classes of simple objects in $\mathcal C$. Let $\mathcal K_0(\mathcal C)$ be the  Grothendieck group  of $\mathcal C$. We know that 
\begin{itemize}
    \item  $\mathcal K_0(\mathcal C)$ is a free $\mathbb Z$-module with a basis given by $\Irr(\mathcal C)$,  \ie $\mathcal K_0(\mathcal C)=\bigoplus_{[S]\in\Irr(\mathcal C)}\mathbb Z[S]$.
    \item  $\mathcal K_0(\mathcal C)$ is a ring with multiplicity induced by $[M][N] \coloneqq   [M\otimes N]$, where $M,N\in \mathcal C$. 
\end{itemize}

\begin{definition} Let $M$ and $N$ be two simple objects in $\mathcal C$.
\begin{itemize}
    \item[(i)] We say that $M$ and $N$ {\em commute}, if $M\otimes N\cong N\otimes M$.  
    \item[(ii)] We say that $M$ and $N$ {\em strongly commute}, if $M$ and $N$ commute and  $M\otimes N$ is simple.
    \item[(iii)] We say that $M$ is {\em real}, if $M\otimes M$ is simple. 
\end{itemize}
\end{definition}

%Note that the strong commutativity implies the commutativity under the assumption that $\mathcal{K}_0(\mathcal{C})$ is a commutative, which we are going to keep assuming below. 

\begin{definition} Let $\mathcal M=(M_1,\ldots,M_r)$ be an ordered set of simple objects in $\mathcal C$.
\begin{itemize}
    \item [(i)] We say that $\mathcal M$ a  {\em commuting set}, if $M_i$ and $M_j$ commute for any $i,j\in[1,r]$.
    \item[(ii)] We say that $\mathcal M$ is a {\em strongly commuting set}, if $M_i$ and $M_j$ strongly commute for any $i,j\in[1,r]$. In particular, each $M_i$ is a real simple object.
    \end{itemize}
\end{definition}

Given a commuting  set $\mathcal M=(M_1,\ldots,M_r)$ and a vector ${\bf a}=(a_1,\ldots,a_r)^T\in\mathbb Z_{\geq 0}^r$, 
denote 
$$\mathcal M({\bf a}) \coloneqq M_1^{\otimes a_1}\otimes M_2^{\otimes a_2}\otimes \ldots \otimes M_r^{\otimes a_r}.$$
Clearly, for any ${\bf a}, {\bf c}\in\mathbb Z_{\geq 0}^r$, we have
 $\mathcal{M}(\mathbf{a}) \otimes \mathcal{M}(\mathbf{c}) \cong \mathcal{M}(\mathbf{a }+ \mathbf{c})$.

\begin{definition}[Monoidal categorification]
  A finite length, $\mathbf{k}$-linear, monoidal abelian category $\mathcal C$ is a {\em monoidal categorification} of a cluster algebra $\mathcal A^+$ (with non invertible frozen variables), if there exists a $\mathbb Z$-algebra isomorphism $$\varphi\colon\mathcal K_0(\mathcal C)\rightarrow \mathcal A^+$$ such that the cluster monomials of $\mathcal A^+$ correspond to a subset of simple objects (up to isomorphism) in $\mathcal C$. We denote by $(\mathcal A^+, \mathcal C, \varphi)$ for this monoidal categorification.
\end{definition}

The notion of monoidal categorification of cluster algebras is originally introduced by Hernandez and Leclerc \cite{HL_2010}. 
Examples of cluster monoidal categorification include various monoidal subcategories of finite-dimensional modules over quantum affine algebras and quiver Hecke algebras \cite{HL16, kkko-2018, kkop-2024}, and the category of perverse coherent sheaves on the affine Grassmannian of $GL_n$ \cite{CW19}.

From now on, we fix a monoidal categorification $(\mathcal A^+, \mathcal C, \varphi)$ and denote \[\varphi(M) \coloneqq \varphi([M])\in\mathcal A^+\]
for any object $M\in\mathcal C$. Let $({\bf x}_t,\widetilde B_t)$ a seed of $\mathcal A^+$ and let $M_{i;t}\in\mathcal C$ be a simple object such that $\varphi(M_{i;t})=x_{i;t}$, which is unique up to isomorphism.  We call $\mathcal M_t \coloneqq (M_{1;t},\ldots,M_{m;t})$ a {\em monoidal cluster} and $(\mathcal M_t,\widetilde B_t)$ a {\em monoidal seed}. Following the terminology in the additive categorification, a simple object $M\in\mathcal C$ is said to be {\em reachable} if $\varphi(M)$ is a cluster monomial.

The following facts can be checked easily.
\begin{itemize}
    \item Each monoidal cluster $\mathcal M_t=(M_{1;t},\ldots,M_{m;t})$ is a strongly commuting set.
    \item Let ${\bf a}\in\mathbb Z_{\geq 0}^m$, then $\varphi(\mathcal M_t({\bf a}))={\bf x}_t^{\bf a}$. In particular, $\mathcal M_t({\bf a})$ is a reachable simple object.
    \item Each reachable simple object of $\mathcal C$ is real.
\end{itemize}

\begin{proposition}[{\cite[Proposition 2.2]{HL_2010}}]
\label{prop:HL_2010}
  Let $(\mathcal A^+, \mathcal C, \varphi)$ be a monoidal categorification and $M$ an object in $\mathcal C$. Then $\varphi(M)$ is universally positive in $\mathcal A^+$, that is, for any cluster ${\bf x}=(x_{1},\ldots,x_{m})$ of $\mathcal A^+$, we have $\varphi(M)\in\mathbb Z_{\geq 0}[x_{1}^{\pm 1},\ldots,x_{m}^{\pm 1}]$.
\end{proposition}

\subsection{$\mathsf{\Lambda}$-structure and $\mathsf{\Lambda}$-monoidal categorification}
In this subsection, we fix a
finite length, $\mathbf{k}$-linear, monoidal abelian category $\mathcal C$. For an object $M\in\mathcal C$, we denote by $\hd(M)$ the head  of  $M$. For any two objects $M,N$ in $\mathcal C$, we denote by
\[M\nabla N \coloneqq  \hd(M\otimes N)
\]
 the head of the tensor product $M\otimes N$.

The $\Lambda$-invariant and $\mathfrak{d}$-invariant are introduced by Kang, Kashiwara, Kim and Oh \cite{kkko-2018} in the representation theory of quiver Hecke algebras and by Kashiwara, Kim, Oh and Park \cite{kkop-2020} in the representation theory of quantum affine algebras in the study of monoidal categorification of cluster algebras. The definitions of $\Lambda$-invariant (and hence $\mathfrak{d}$-invariant) in both settings are constructive. For the purpose of this paper, we will rely only on the properties of the $\Lambda$-invariant, rather than on  the specific constructions. For this reason, we introduce the notion of $\mathsf{\Lambda}$-structure on a monoidal (abelian) category, which allows us to unify the definitions of $\Lambda$-invariant and $\mathfrak{d}$-invariant in different settings and unify the proofs for our results.

\begin{definition}[$\mathsf{\Lambda}$-structure and $\Lambda/\mathfrak{d}$-invariant] \label{def:L-structure}
A {\em $\mathsf{\Lambda}$-structure} on a monoidal category $\mathcal C$ is a pair $\mathsf{\Lambda}=(\Omega, \Lambda)$ such that (i) and (ii) hold.
\begin{itemize}
    \item [(i)] $\Omega$ is a class of non-zero objects of $\mathcal C$ containing all the simple objects and closed under
 isomorphism,  taking tensor products and non-zero subquotients.
    \item [(ii)]  ${\Lambda} \colon\Omega\times \Omega\rightarrow\mathbb Z$ is a map satisfying the following conditions.
\begin{itemize}
    \item[(a1)] The number $\Lambda(M,N)$ depends only on the isomorphism classes of $M$ and $N$.
    \item [(a2)] For $M, N\in\Omega$, we set 
    \[\mathfrak{d}(M,N)\coloneqq\frac{\Lambda(M,N)+\Lambda(N,M)}{2}.\]  If $M$ and $N$ are real simple objects and they strongly commute, then $\mathfrak{d}(M,N)=0$. 
\item[(a3)] Let $M,N, L\in\Omega$. If $L$ is  a simple object, then
\begin{align}
    \Lambda(M\otimes N,L)&=\Lambda(M,L)+\Lambda(N,L),\nonumber\\
  \Lambda(L,M\otimes N)&=\Lambda(L,M)+\Lambda(L,N).\nonumber  
\end{align}
\item[(a4)]  Let $M,N, L\in\Omega$ be simple objects.  If $L$ is real and it strongly commutes with $N$, then
\[
\Lambda(M\nabla N,L)=\Lambda(M,L)+\Lambda(N,L).
\]
Dually,
if $L$ is real and it strongly commutes with $M$, then
\[ \Lambda(L,M\nabla N)=\Lambda(L,M)+\Lambda(L,N).
\]
\item[(a5)] Let $M,N\in\Omega$. Then for any subquotient $0\neq M'$ of $M$ and subquotient $0\neq N'$ of $N$, we have 
$\Lambda(M',N')\leq \Lambda(M,N)$.
\end{itemize}
\end{itemize}
We call the numbers $\Lambda(M,N)$ and $\mathfrak{d}(M,N)$ the {\em $\Lambda$-invariant} and  {\em $\mathfrak{d}$-invariant} of the pair $(M,N)$. If $\mathsf{\Lambda}=(\Omega, \Lambda)$ is a $\mathsf{\Lambda}$-structure on a  monoidal category $\mathcal C$, we simply say that $\Lambda \colon\Omega\times \Omega\rightarrow\mathbb Z$ is a $\mathsf{\Lambda}$-structure on  $\mathcal C$.
\end{definition}
\begin{remark}\label{Rem:Rren}
The above axioms of $\mathsf{\Lambda}$-structure extract some of key properties of the $\Lambda$-invariant appearing in the representation theory of quiver
Hecke algebras \cite[Lem.\ 3.1.5, Lem.\ 3.2.3, Prop.\ 3.2.8, Lem.\ 3.2.13 ]{kkko-2018} and that of quantum affine algebras \cite[Prop.\ 3.9, Lem.\ 3.10, Cor.\ 3.17, Lem.\ 4.3]{kkop-2020}. 
More generally, these properties are consequences of the much more natural assumption that the monoidal abelian category $\mathcal C$ admits a system of renormalized $r$-matrices and has separable triple products in the sense of \cite[Section 4]{CW19}.

A system of renormalized $r$-matrices is an assignment of a non-zero homomorphism 
\[\mathbf{r}_{M,N} \colon M \otimes N \to N \otimes M\] together with an integer $\Lambda(M,N)$ for each ordered pair $(M,N)$ in $\Omega$ satisfying certain compatibility conditions between $\mathbf{r}_{M,N}$ and $\Lambda(M,N)$, which we can regard as a weaker version of the structure of a braided monoidal category. 
In general, the homomorphisms $\mathbf{r}_{M,N}$ need neither to be isomorphisms nor to be natural in $(M,N)$.
The $\Lambda$-invariant somehow controls how close these homomorphisms are to being natural.  
(Although \cite[Definition 4.1]{CW19} requires $\mathbf{r}_{M,N}$ and $\Lambda(M,N)$ to be defined for all $M,N \in \mathcal{C}$, there is no harm to weaken this requirement so that they are defined only for $M,N \in \Omega$ to obtain all the necessary consequences.) 
Having separable triple products is a weaker version of the rigidity in the monoidal category.

Many interesting non-commutative monoidal abelian categories admitting generic braidings satisfy these conditions.
Examples include the category of finite-dimensional modules over symmetric quiver Hecke algebras and the quantum affine algebras \cite{KKKO-2015, kkko-2018, kkop-2020}, the category of finite-length representations of general linear groups over $p$-adic fields \cite{LM18, Dros}, and the category of equivariant Koszul-perverse coherent sheaves on the BFN spaces of triples (including the equivariant perverse coherent sheaves on affine Grassmannians as special cases) categorifying the $K$-theoretic Coulomb branches \cite{CW19, CW23}. 
In all these examples, the renormalized $r$-matrices ${\bf r}_{M,N}$ are obtained from a canonical family of braiding operators $R_{M,N}(z)$, meromorphically depending on an additional continuous parameter $z$ by a suitable renormalization and a specialization as ${\bf r}_{M,N} = \lim_{z\to1} (z-1)^s R_{M,N}(z)$ for some $s \in \mathbb Z$.   
Then, roughly speaking, {\em the invariant $\Lambda(M,N)$ encodes the pole/zero order  $s$ of $R_{M,N}(z)$ at $z=1$.}
In all the known cases, the $\mathsf{\Lambda}$-structure arises in this way.
\end{remark}

\begin{lemma}\label{lem:L-g1}
    Let $\Lambda\colon\Omega\times \Omega\rightarrow\mathbb Z$ be a $\mathsf{\Lambda}$-structure on $\mathcal C$. Let $\mathcal M_w=(M_1,\ldots,M_m)$ be a strongly commuting set of real simple objects and set $\lambda_{ij} \coloneqq   \Lambda(M_i,M_j)$ for  $i,j\in[1,m]$. Then the following statements hold.
    \begin{itemize}
        \item [(i)] The matrix $\Lambda_w \coloneqq   (\lambda_{ij})$ is an $m\times m$ skew-symmetric integer matrix.
        \item[(ii)] For any ${\bf a},{\bf b}\in\mathbb Z_{\geq 0}^m$, we have
    \[ \Lambda(\mathcal M_w({\bf a}), \mathcal M_w({\bf b}))={\bf a}^T\Lambda_w{\bf b}.\]
    \end{itemize}
\end{lemma}
\begin{proof}
(i) Since $\mathcal M_w$ is a strongly commuting set of real simple objects and by the condition (a2) in the definition of ${\mathsf{\Lambda}}$-structure, we know that $\Lambda(M_i,M_j)+\Lambda(M_j,M_i)=0$ for $i,j\in[1,m]$. Thus the matrix $\Lambda_w$ is skew-symmetric.

(ii) Suppose that ${\bf b}={\bf b}_1+{\bf b}_2$ with ${\bf b}_1,{\bf b}_2\in\mathbb Z_{\geq 0}^m$. Then $\mathcal M_w({\bf b}_1)\otimes\mathcal M_w({\bf b}_2)\cong\mathcal M_w({\bf b})$. Since $\mathcal M_w({\bf a})$ is simple and by the condition (a3), we have
\[
\Lambda(\mathcal M_w({\bf a}),\mathcal M_w({\bf b}))=\Lambda(\mathcal M_w({\bf a}),\mathcal M_w({\bf b}_1))+\Lambda(\mathcal M_w({\bf a}),\mathcal M_w({\bf b}_2)).
\]
The similar reduction works for ${\bf a}\in\mathbb Z_{\geq 0}^m$ as well. 
Repeated reductions lead to the result.
\end{proof}

The following lemma plays a key role in the proof of the main theorem (Theorem \ref{thm:trop-Lambda}) in this section.
\begin{lemma}\label{lem:simpe-head}
 Let $\Lambda\colon\Omega\times \Omega\rightarrow\mathbb Z$ be a $\mathsf{\Lambda}$-structure on $\mathcal C$. Let $M,N,L\in\Omega$ be simple objects, and let $S_1,\ldots,S_l$ be a list of composition factors of $M\otimes N$.  Suppose that  the head $M\nabla N$ of $M\otimes N$ is simple.  Then the following statements hold.
 \begin{itemize}
     \item [(i)]
 If $L$ is real and it strongly commutes with $N$, then
\[
\Lambda(M,L)+\Lambda(N,L)=\Lambda(M \nabla N, L)=\max\{ \Lambda(S_i, L)\mid i\in[1,l]\}.
\]
\item[(ii)] If $L$ is real and it strongly commutes with $M$, then
\[
\Lambda(L,M)+\Lambda(L,N)=\Lambda(L, M \nabla N)=\max\{ \Lambda(L, S_i)\mid i\in[1,l]\}.
\]
\end{itemize}
\end{lemma}
\begin{proof}
 (i) Since the head $M\nabla N$ is simple, we know that $M\nabla N\cong S_k$ for some $k\in[1,l]$. As $L$ is real and it strongly commutes with $N$, the conditions (a3) and (a4) for the $\mathsf{\Lambda}$-structure $\Lambda\colon\Omega\times \Omega\rightarrow \mathbb Z$ imply
\[
   \Lambda(S_k, L)= \Lambda(M \nabla N, L)=\Lambda(M, L)+\Lambda(N,L)=\Lambda(M \otimes N, L).
   \]
On the other hand, the condition (a5) implies
\[\Lambda(S_i, L) \leq \Lambda(M\otimes N,L ) =\Lambda(M, L)+\Lambda(N,L)
\]
for any $i\in[1,l]$.
Therefore, we have 
\[
\Lambda(M,L)+\Lambda(N,L)=\Lambda(M \nabla N, L)=\max\{ \Lambda(S_i, L)\mid i\in[1,l]\}.
\]

(ii) The proof is similar to (i).
\end{proof}
We can see that the simple head condition in the above lemma plays an important role in the proof. In order to maintain this condition, we make the following assumption on the monoidal category $\mathcal C$, which is  automatically satisfied once $\mathcal{C}$ admits a system of renormalized $r$-matrices and has the separable triple products in the sense of \cite[Section 4]{CW19}. In particular, this is true for all  known examples of $\mathsf{\Lambda}$-structures.

\begin{assumption}\label{assumption}
Let $M, N \in \mathcal C$ be simple objects such that at least one of them is real. Then the head $M \nabla N$ of $M\otimes N$ is simple.
\end{assumption}

\begin{definition}[$\mathsf{\Lambda}$-monoidal categorification] \label{def:L-monoidal}
Let $(\mathcal A^+,\mathcal C,\varphi)$ be a monoidal categorification and  $\Lambda\colon\Omega\times \Omega\rightarrow\mathbb Z$ a $\mathsf{\Lambda}$-structure on $\mathcal C$. We say that  $(\mathcal A^+,\mathcal C,\varphi, \Omega, \Lambda)$ is a {\em ${\mathsf{\Lambda}}$-monoidal categorification} if the following conditions hold. 
\begin{itemize}
    \item [(c1)] The monoidal category $\mathcal C$ satisfies Assumption \ref{assumption}. 
    \item[(c2)] The $\mathsf{\Lambda}$-structure and cluster structure on $\mathcal C$ are compatible in the sense that there exists a vertex $w \in \mathbb{T}_n$ such that $(\widetilde B_w, \Lambda_w)$ forms a compatible pair,  \ie $\widetilde B_w^T\Lambda_w=(S\mid {\bf 0})$ for some diagonal matrix $S=diag(s_1,\ldots,s_n)$ with $s_j\in\mathbb Z_{\geq 1}$. Here  $\Lambda_w$ is the $m\times m$ skew-symmetric matrix given as in Lemma \ref{lem:L-g1}.
\end{itemize}
\end{definition}
The teminology of $\mathsf{\Lambda}$-monoidal categorification is borrowed from \cite[Definition 6.7]{kkop-2020}. 
\begin{remark} Thanks to Proposition \ref{prop:B-L-t} below, for a $\mathsf{\Lambda}$-monoidal categorification, we know that 
    $(\widetilde B_t)^T\Lambda_t=(S\mid {\bf 0})$ holds for any vertex $t\in\mathbb T_n$.
\end{remark}
\begin{remark}
  The examples of $\mathsf{\Lambda}$-monoidal categorifications contain those monoidal categorifications constructed by  Kang,  Kashiwara, Kim, and  Oh  \cite{kkko-2018} using monoidal subcategories of representations of (symmetric) quiver Hecke algebras
    and  by Kashiwara, Kim, Oh, and  Park \cite{kkop-2024} using monoidal subcategories of representations of quantum affine algebras. Notice  that in \cite{kkko-2018,kkop-2024}, the authors always take $S=2I_n$ in condition (c2) of Definition \ref{def:L-monoidal}.
\end{remark}

\begin{proposition}[cf.\ {\cite[Proposition 7.1.2(e)]{kkko-2018}, \cite[Proposition 6.4]{kkop-2020}}] \label{prop:B-L-t}
Let $(\mathcal A^+,\mathcal C,\varphi, \Omega, \Lambda)$ be a $\mathsf{\Lambda}$-monoidal categorification.
Then the following statements hold.
\begin{enumerate}
\item[(i)] For any vertex $t\in\mathbb T_n$, $(\widetilde B_t, \Lambda_t)$ forms a compatible pair.
\item[(ii)] For any edge \begin{xy}(0,1)*+{t}="A",(10,1)*+{t'}="B",\ar@{-}^k"A";"B" \end{xy} in $\mathbb T_n$, $(\widetilde B_{t'}, \Lambda_{t'})=\mu_k(\widetilde B_{t}, \Lambda_{t})$ as mutation of compatible pairs.
\end{enumerate}
In particular, $\mathcal{A}^+$ is a $\mathsf{\Lambda}$-cluster algebra.
\end{proposition}
\begin{proof}
For the convenience of the reader, we display the proof here. Since $(\mathcal A^+,\mathcal C,\varphi, \Omega, \Lambda)$ is a $\mathsf{\Lambda}$-monoidal categorification, there exists a vertex $w \in \mathbb{T}_n$ such that $(\widetilde B_w, \Lambda_w)$ forms a compatible pair. Denote by $(\widetilde B',\Lambda')=\mu_k(\widetilde B_w, \Lambda_w)$ the compatible pair obtained from $(\widetilde B_w, \Lambda_w)$ by mutation in direction $k\in[1,n]$, where $\Lambda'$ is given by the equation \eqref{eqn:L-mutation}.

Let $w'=\mu_k(w)$ be the vertex in $\mathbb T_n$ such that $w$ and $w'$ are joined by the edge labeled by $k\in[1,n]$. We know that $\widetilde B'=\widetilde B_{w'}$.  Denote by $\Lambda_t=(\lambda_{ij;t})$ for $t\in\mathbb T_n$ and $\Lambda'=(\lambda_{ij}')$. All we need to do is to prove $\Lambda_{w'}=\Lambda'$, \ie $\lambda_{ij;w}=\lambda_{ij}'$ for any $i,j\in[1,m]$.

We first consider the case $i\neq k$ and $j\neq k$. In this case,
\[\lambda_{ij}'=\lambda_{ij;w}=\Lambda(M_{i;w},M_{j;w})=\Lambda(M_{i;w'},M_{j;w'})=\lambda_{ij;w'}.\]
Now we consider the case $i\neq k$ and $j=k$. 
 Denote by ${\bf b}_k=(b_{1k},\ldots,b_{mk})^T\in\mathbb Z^m$ the $k$th column of $\widetilde B_w$ and by
\[[{\bf b}_k]_+=([b_{1k}]_+,\ldots,[b_{mk}]_+)^T,\quad [-{\bf b}_k]_+=([-b_{1k}]_+,\ldots,[-b_{mk}]_+)^T.
\]
Since $(\widetilde B_w,\Lambda_w)$ is a compatible pair and $i\neq k$,  the $(i,k)$-entry of $\Lambda_w\widetilde B_w$ is zero and thus
\[\Lambda(M_{i;w},\mathcal M_w([-{\bf b}_k]_+))=\sum_{l=1}^m[-b_{lk}]_+\lambda_{il;w}=\sum_{l=1}^m[b_{lk}]_+\lambda_{il;w}=\Lambda(M_{i;w},\mathcal M_w([{\bf b}_k]_+)).
\]
In the Grothendieck ring $\mathcal K_0(\mathcal C)$, we have the relation:
\[ [M_{k;w'}][M_{k;w}]=[M_{k;w}\otimes M_{k;w'}]=[\mathcal M_w([{\bf b}_k]_+)]+[\mathcal M_w([-{\bf b}_k]_+)].
\]
So either $M_{k;w}\nabla M_{k;w'}\cong \mathcal M_w([-{\bf b}_k]_+)$ or $M_{k;w}\nabla M_{k;w'}\cong \mathcal M_w([{\bf b}_k]_+)$. In either case, 
 we  have \[
\Lambda(M_{i;w},M_{k;w}\nabla M_{k;w'})=\Lambda(M_{i;w},\mathcal M_w([-{\bf b}_k]_+))=\Lambda(M_{i;w},\mathcal M_w([{\bf b}_k]_+)).
\]
Since $M_{i;w}$ strongly commutes with $M_{k;w}$ and by the condition (a4) in Definition \ref{def:L-structure}, we have
\begin{eqnarray}\label{eqn:L-ww}
  \Lambda(M_{i;w},\mathcal M_w([-{\bf b}_k]_+))=  \Lambda(M_{i;w}, M_{k;w}\nabla M_{k;w'})=
\Lambda(M_{i;w}, M_{k;w})+\Lambda(M_{i;w},  M_{k;w'}).\nonumber
\end{eqnarray}
Thus  
\begin{align*}
   \lambda_{ik;w'}= \Lambda(M_{i;w},  M_{k;w'})&=-\Lambda(M_{i;w}, M_{k;w})+\Lambda(M_{i;w},\mathcal M_w([-{\bf b}_k]_+))\\
    &=-\lambda_{ik;w}+\sum_{l=1}^m[-b_{lk}]_+\lambda_{il;w}.
\end{align*}
By comparing with equation \eqref{eqn:L-mutation}, we see that the $(i,k)$-entries of $\Lambda'$ and $\Lambda_{w'}$ are the same. Since both $\Lambda'$ and $\Lambda_{w'}$ is skew-symmetric, we know that the $(k,i)$-entries of  $\Lambda'$ and $\Lambda_{w'}$ are also the same. Thus $\Lambda'=\Lambda_{w'}$.  So $(\widetilde B_{w'}, \Lambda_{w'})=\mu_k(\widetilde B_w, \Lambda_w)$ as mutation of compatible pairs.
\end{proof}
\subsection{Comparison with ${\Lambda}$-invariant and $\mathfrak{d}$-invariant}
In this subsection, we compare the tropical invariant and $F$-invariant with the ${\Lambda}$-invariant and $\mathfrak{d}$-invariant.

\begin{lemma}\label{cor:head}
Let  $(\mathcal A^+,\mathcal C,\varphi, \Omega, \Lambda)$ be a $\mathsf{\Lambda}$-monoidal categorification and  $M$ a simple object in $\mathcal C$. Then for any monoidal cluster $\mathcal M_w$,  there exist ${\bf d,h,h'} \in \mathbb Z_{\ge 0}^m$ satisfying
\[
\mathcal M_w({\bf d})\nabla M\cong\mathcal M_w({\bf h}) \quad \text{and} \quad 
M \nabla \mathcal M_w({\bf d})\cong\mathcal M_w({\bf h'}). 
 \]
\end{lemma}
\begin{proof}
For the vertex $w\in\mathbb T_n$, by the Laurent phenomenon, there exist a vector ${\bf d}\in\mathbb Z_{\geq 0}^m$ and a finite subset $\mathbb B\subseteq\mathbb Z_{\geq 0}^m$ such that
\begin{equation} \label{eq:pos_exp}
 \varphi(M)\cdot \varphi(\mathcal M_w({\bf d})) = \varphi(M\otimes \mathcal M_w({\bf d}))= \varphi(\mathcal M_w({\bf d}) \otimes M) = \sum_{{\bf b}\in\mathbb B}c_{{\bf b}}\varphi(\mathcal M_w({\bf b})),
\end{equation}
 where $\{\mathcal M_w({\bf b})\mid {\bf b}\in\mathbb B\}$ is the set of composition factors of $M\otimes \mathcal M_w({\bf d})$ (and also those of $\mathcal M_w({\bf d}) \otimes M$) and $c_{\bf b}\in\mathbb Z_{\geq 1}$ is the  multiplicity of $\mathcal M_w({\bf b})$.
Then, by the condition (c1) in Definition \ref{def:L-monoidal}, we know that the heads 
$\mathcal M_w({\bf d}) \nabla M$ and $M \nabla \mathcal M_w({\bf d})$ are simple. Hence, 
$\mathcal M_w({\bf d}) \nabla M\cong \mathcal M_w({\bf h})$ and $M \nabla \mathcal M_w({\bf d})\cong\mathcal M_w({\bf h'}) $ for some ${\bf h, h'}\in\mathbb B$.
\end{proof}

From now on, we fix a $\mathsf{\Lambda}$-monoidal categorification $(\mathcal A^+,\mathcal C,\varphi, \Omega, \Lambda)$. We know that $\mathcal A^+$ is a $\mathsf{\Lambda}$-cluster algebra.

{\em Notations:} Denote by $(\mathcal S_X,\mathcal S_Y,\mathsf{\Lambda})$ the Langlands-Poisson triple corresponding to the $\mathsf{\Lambda}$-cluster algebra $\mathcal A^+$. Let $M\in\mathcal C$ be an object such that $\varphi(M)$ is a good element in $\mathcal A^+$. For any vertex $t\in\mathbb T_n$, we denote by 
\[{\bf g}_M^t \coloneqq   {\bf g}_{\varphi(M)}^t\in\mathbb Z^m \;\;\text{and}\;\;F_M^t \coloneqq   F_{\varphi(M)}^t\in\mathbb Z[y_1,\ldots,y_n]
\]
the extended $g$-vector and  $F$-polynomial of $\varphi(M)$ with respect to vertex $t$ and  denote by 
\[
\beta_M \coloneqq   \beta_{\varphi(M)}\colon \mathbb F_{>0}=\mathbb Q_{\rm sf}(x_{1;t_0},\ldots,x_{m;t_0})\rightarrow\mathbb Z^{\rm max}
\]
the semifield homomorphism given by the good element $\varphi(M)$ as in Proposition \ref{pro:beta-map}.

Let $N$ be a reachable simple object, say $N\cong \mathcal M_w({\bf a})$. We know that $\varphi(N)={\bf x}_w^{\bf a}$ is a cluster monomial in $\mathcal A^+$ and thus it is a good element. We denote by $$[{\bf g}_{N, \pm}]=\{{\bf g}_{N, \pm}^t\in\mathbb Z^m\mid t\in\mathbb T_n\}$$ the unique tropical point  in $\mathcal S_Y(\mathbb Z^{\rm max})$ determined by the condition ${\bf g}_{N, \pm}^w=\pm{\bf a}$. By Proposition \ref{pro:a-point}, we know that 
\[\{\Lambda_t{\bf g}_{N, \pm}^t\in\mathbb Z^m\mid t\in\mathbb T_n\}\]
are tropical points  in $\mathcal S_X(\mathbb Z^{\rm max})$.  Then by Corollary \ref{cor:bijection}, there exist unique semifield homomorphisms 
$$\beta_{N,\pm}\colon\mathbb F_{>0}=\mathbb Q_{\rm sf}(x_{1;t_0},\ldots,x_{m;t_0})\rightarrow\mathbb Z^{\rm max}$$ such that 
\begin{eqnarray}\label{eqn:beta-N}
      \beta_{N, \pm}({\bf x}_t)=(\Lambda_t{\bf g}_{N, \pm}^t)^T
  \end{eqnarray}
  where $t$ is any vertex in $\mathbb T_n$. In particular, $\beta_{N, \pm}({\bf x}_w)= \pm(\Lambda_w{\bf a})^T$.
\begin{remark}
     Since ${\bf g}_{N,+}^w={\bf a}$ is the extended $g$-vector of the cluster monomial $\varphi(N)={\bf x}_w^{\bf a}$ with respect to vertex $w$,  we know that 
${\bf g}_{N,+}^t={\bf g}_N^t$ for any vertex $t\in\mathbb T_n$ and thus $$\beta_{N,+} = \beta_N=\beta_{\varphi(N)}.$$
\end{remark}

For each object $M\in \mathcal C$, we know that $\varphi(M)$ is an element in $\mathbb F_{>0}=\mathbb Q_{\rm sf}(x_{1;t_0},\ldots,x_{m;t_0})$, by Proposition \ref{prop:HL_2010}. Thus $\beta_{N, \pm}(\varphi(M))$ can be defined. For simplicity, we denote by $$\beta_{N, \pm}(M)\coloneqq\beta_{N, \pm}(\varphi(M))$$ for any object $M\in\mathcal C$.  Hence, if $\varphi(M)$ is a good element, we have
\begin{equation} \label{eq:beta+=trop}
\beta_{N,+}(M) =\beta_{\varphi(N)}(\varphi(M))= \langle \varphi(M), \varphi(N) \rangle_{\rm trop}. 
\end{equation}

\begin{theorem} \label{thm:trop-Lambda}
Let  $(\mathcal A^+,\mathcal C,\varphi, \Omega, \Lambda)$ be a $\mathsf{\Lambda}$-monoidal categorification. Then
for any reachable simple object $N= \mathcal M_w({\bf a})$, the following statements hold.
 \begin{itemize}
    \item [(i)]  If $M$ is a simple object of $\mathcal{C}$, we have
    \[ \Lambda(M,N) = \beta_{N,+}(M), \quad \Lambda(N,M) = \beta_{N,-}(M). \]
    \item [(ii)] If $M$ is a simple object of $\mathcal{C}$ such that $\varphi(M)$ is a good element, we have
    \[\Lambda(M,N)=\langle \varphi(M), \varphi(N)\rangle_{\rm trop}, \quad 
    \Lambda(N,M) = \langle \varphi(N), \varphi(M)\rangle_{\rm trop}\] 
    and hence
    \[ 2\mathfrak{d}(M,N) = (\varphi(M)\mid\mid \varphi(N))_F. \]
\end{itemize}  
\end{theorem}
\begin{proof}
(i)  For the vertex $w\in\mathbb T_n$, as in the proof of Lemma \ref{cor:head}, there are a vector ${\bf d}\in\mathbb Z_{\geq 0}^m$ and a finite subset $\mathbb B\subseteq\mathbb Z_{\geq 0}^m$ such that the equation \eqref{eq:pos_exp} holds. 
Then $\{\mathcal M_w({\bf b})\mid {\bf b}\in\mathbb B\}$ is the set of composition factors of $\mathcal M_w({\bf d}) \otimes M$.

On the one hand, the head $\mathcal M_w({\bf d}) \nabla M$ is simple, by the condition (c1) in Definition \ref{def:L-monoidal}. On the other hand, we know that  $N = \mathcal M_w({\bf a})$ is real and it strongly commutes with $\mathcal M_w({\bf d})$.  Thus Lemma \ref{lem:simpe-head} (ii) can be applied and we have
\begin{eqnarray}
   \Lambda(N, \mathcal M_w({\bf d}))+\Lambda(N,M)=\Lambda(N, \mathcal M_w({\bf d}) \nabla M)=\max\{ \Lambda(N, \mathcal M_w({\bf b}))\mid {\bf b}\in\mathbb B\}.\nonumber
\end{eqnarray}
Thus
\begin{eqnarray}\label{eqn:Lambda-max}
\Lambda(N,M)&=&\max\{ \Lambda(N,\mathcal M_w({\bf b}))\mid {\bf b}\in\mathbb B\}- \Lambda(N, \mathcal M_w({\bf d}))\nonumber\\
&\xlongequal{N= \mathcal M_w({\bf a})}&\max\{ \Lambda(\mathcal M_w({\bf a}), \mathcal M_w({\bf b}))\mid {\bf b}\in\mathbb B\}- \Lambda(\mathcal M_w({\bf a}),\mathcal M_w({\bf d}))\label{eq:Lmax}\\
&\xlongequal{\text{Lem.} \ref{lem:L-g1}}&\max\{{\bf a}^T\Lambda_w{\bf b}\mid {\bf b}\in\mathbb B\}-{\bf a}^T\Lambda_w{\bf d}\nonumber.
\end{eqnarray}

 Now we calculate $\beta_{N,-}(M)$.
By applying the semifield homomorphism $\beta_{N,-}\colon\mathbb F_{>0}\rightarrow\mathbb Z^{\rm max}$ to \eqref{eq:pos_exp},  we 
have $
\beta_{N,-}(M)+\beta_{N,-}(\mathcal M_w({\bf d}))=\max\{ \beta_{N,-}(\mathcal M_w({\bf b}))\mid {\bf b}\in\mathbb B\}$,
 \ie $$\beta_{N,-}(M)+\beta_{N,-}({\bf x}_w^{\bf d})=\max\{ \beta_{N,-}({\bf x}_w^{\bf b})\mid {\bf b}\in\mathbb B\}.$$ 
Since $N= \mathcal M_w({\bf a})$, we have $\beta_{N,-}({\bf x}_w)=(\Lambda_w{\bf g}_{N,-}^w)^T=-(\Lambda_w{\bf a})^T = {\bf a}^T \Lambda_w$ and hence
\begin{align}
  \beta_{N,-}(M)&= \max\{ \beta_{N,-}({\bf x}_w^{\bf b})\mid {\bf b}\in\mathbb B\}-\beta_{N,-}({\bf x}_w^{\bf d})\nonumber\\
  &=\max\{{\bf a}^T\Lambda_w{\bf b}\mid {\bf b}\in\mathbb B \}-{\bf a}^T\Lambda_w{\bf d}.\nonumber
  \end{align}
Thus we obtain $\Lambda(N,M) = \beta_{N,-}(M)$. 
The proof of $\Lambda(M,N) = \beta_{N,+}(M)$ is similar.

(ii) The equality $\Lambda(M,N)=\langle \varphi(M), \varphi(N)\rangle_{\rm trop}$ is immediate from the first equality in (i) and \eqref{eq:beta+=trop}. 
Let us prove the other one,  \ie $\Lambda(N,M) = \langle \varphi(N), \varphi(M)\rangle_{\rm trop}$. As $\varphi(M)$ is a good element and by \eqref{eq:pos_exp}, we know that the finite set $\mathbb B\subseteq\mathbb Z_{\geq 0}^m$   contains ${\bf b}_0 \coloneqq {\bf g}_M^w + {\bf d}$ satisfying ${\bf b} - {\bf b}_0 \in  \widetilde{B}_w (\mathbb{Z}_{\ge 0}^{n}) $ for all ${\bf b} \in \mathbb{B}$.
Since 
\[
{\bf a}^T \Lambda_w ({\bf b}-  {\bf b}_0) \in {\bf a}^T\Lambda_w\widetilde{B}_w (\mathbb{Z}_{\ge 0}^{n}) = -{\bf a}^T(S \mid {\bf 0})^T(\mathbb{Z}_{\ge 0}^n) \subseteq \mathbb{Z}_{\le 0}
\]
for any ${\bf b} \in \mathbb{B}$,
we deduce
\[ \max\{{\bf a}^T\Lambda_w{\bf b}\mid {\bf b}\in\mathbb B\} = {\bf a}^T\Lambda_w{\bf b}_0 = {\bf a}^T\Lambda_w({\bf g}_M^w + {\bf d}).\]
Combined with \eqref{eq:Lmax}, it yields
\begin{equation} \label{eq:aLM}
\Lambda(N,M) = {\bf a}^T\Lambda_w({\bf g}_M^w + {\bf d}) - {\bf a}^T\Lambda_w {\bf d} = {\bf a}^T \Lambda_w {\bf g}_M^w.
\end{equation}
On the other hand, as $N=\mathcal M_w({\bf a})$, we have ${\bf g}_N^w={\bf a}$, $F_{N}^w=F_{{\bf x}_w^{\bf a}}^w =1$  and thus
\[\langle \varphi(N),\varphi(M)\rangle_{\rm trop}= ({\bf g}_N^w)^T\Lambda_w{\bf g}_{M}^w+F_{N}^w[(S\mid {\bf 0}){\bf g}_{M}^w]={\bf a}^T\Lambda_w{\bf g}_{M}^w.
\]
Thus, we get the desired equality $\Lambda(N,M) = \langle \varphi(N), \varphi(M)\rangle_{\rm trop}$.
\end{proof}

Since the reachable simple objects correspond to cluster monomials, which are always good elements in $\mathcal A^+$, we have the following consequence.
\begin{corollary}
    Let  $(\mathcal A^+,\mathcal C,\varphi, \Omega, \Lambda)$ be a $\mathsf{\Lambda}$-monoidal categorification. If $M$ and $N$ are reachable simple objects, then we have 
    \[\Lambda(M,N)=\langle \varphi(M), \varphi(N)\rangle_{\rm trop}\;\;\;\text{and}\;\;\;2\mathfrak{d}(M,N) = (\varphi(M)\mid\mid \varphi(N))_F.\] 
\end{corollary}
\section{Comparison with $E$-invariant in additive cluster categorification}\label{sec:E-invariant}

\subsection{$E$-invariant in the theory of quivers with potentials}
Throughout, let $Q=(Q_0,Q_1)$ be a quiver with vertex set $Q_0=\{1,\ldots,n\}$. We will assume that $Q$ has no cycles of length $\leq 2$. Such a quiver $Q$ corresponds to a skew-symmetric matrix $B_Q=(b_{ij})$, where
\begin{eqnarray}\label{eqn:Q-B-2}
   b_{ij}=\#\{j\rightarrow i \text{ in }Q_1\}-\#\{i\rightarrow j \text{ in }Q_1\}. 
\end{eqnarray}
For example, if  $Q=1\leftarrow 2$, then $B_Q=\begin{bmatrix}
    0&1\\
    -1&0
\end{bmatrix}$.

 For $m\in\mathbb Z_{\geq 0}$, let $\mathbb CQ_m$ be a $\mathbb C$-vector space with a $\mathbb C$-basis labeled by the set $Q_m$ of paths
of length $m$ in $Q$. Clearly, $\mathbb CQ_m$  is finite-dimensional.

The {\em completed path algebra} of $Q$ is denoted by $\widehat{\mathbb CQ} \coloneqq   \prod_{m\geq 0}\mathbb CQ_m$. A {\em potential} $W$ of $Q$ is an element $W=\sum_{m\geq 1}w_m$ of  $\widehat{\mathbb CQ}$, where $w_m$ is a $\mathbb C$-linear combination of $m$-cycles of $Q$.

Let $(Q,W)$ be  a quiver with potential, and  $J=J(Q,W)$ the associated {\em Jacobian algebra} \cite[Section 3]{DWZ08}, which is a quotient of $\widehat{\mathbb CQ}$ by an  ideal $I_W$ (known as {\em Jacobian ideal}) determined by $W$. The quiver with potential $(Q,W)$ is said to be {\em Jacobi-finite}, if the corresponding Jacobian algebra $J=J(Q,W)=\widehat{\mathbb CQ}/I_W$ is finite-dimensional.

\begin{example}
    Consider the quiver $Q$ as follows:
    $$\xymatrix{&1\ar[rd]^{\beta_3}&\\
3\ar[ru]^{\beta_2}&&2\ar[ll]_{\beta_1}}$$
Then  $Q_2=\{\beta_2\beta_1,\;\beta_3\beta_2,\;\beta_1\beta_3\}$.
Notice that  we read paths from right to left. If we take $W=\beta_3\beta_2\beta_1$, the corresponding Jacobian algebra $J=J(Q,W)$ is given by the quiver $Q$ with relations $\beta_2\beta_1=0,\;\beta_3\beta_2=0,\;\beta_1\beta_3=0$. In this case, $(Q,W)$ is Jacobi-finite.
\end{example}

A (finite-dimensional) {\em representation} of $Q=(Q_0,Q_1)$ is a tuple $M=(M_i, M_\alpha)_{i\in Q_0,\alpha\in Q_1}$, where each $M_i$ is a finite-dimensional $\mathbb C$-vector space and $M_\alpha\colon M_i\rightarrow M_j$ is a linear map for each arrow $\alpha\colon i\rightarrow j$. Denote by $\underline \dim M=(\dim M_1,\ldots,\dim M_n)^T\in\mathbb N^n$ the {\em dimension vector} of $M$. Notice that in this paper, we always view  vectors as  {\em column vectors}.

A {\em representation} of $(Q,W)$ is a  representation of $Q$ which is annihilated by the Jacobian ideal $I_W$. A {\em decorated representation}
of $(Q,W)$ is a pair $\mathcal M=(M,V)$, where $M$ is a representation of $(Q,W)$ and $V=(V_i)_{i\in Q_0}=(V_1,\ldots,V_n)$ is a tuple of finite-dimensional $\mathbb C$-vector spaces.  We identify the representations of $(Q,W)$ with the finite-dimensional {\em left modules} of $J=J(Q,W)$.

A decorated representation $\mathcal M=(M,V)$ of $(Q,W)$ is said to be {\em negative}, if $M=0$; and it is said to be {\em negative simple}, if $M=0$ and there exists $i\in Q_0$ such that $V_i=\mathbb C$ and $V_j=0$ for any $j\neq i$.

The $g$-vectors, $F$-polynomials and $E$-invariants of decorated representations are introduced by Derksen, Weyman and Zelevinsky \cite{DWZ10}, which are reformulated nicely in \cite{CLS-2015}. Now let us recall these notions following \cite[Section 3.1]{CLS-2015}. 

\begin{definition}[$g$-vector, $F$-polynomial and $E$-invariant] \label{def:g-F-E}
    Let $\mathcal M=(M,V)$ and  $\mathcal M'=(M',V')$ be two decorated representations of $J=J(Q,W)$. 
    \begin{itemize}
        \item [(i)] The  {\em $g$-vector} of $\mathcal M$ is a vector ${\bf g}({\mathcal M})\in\mathbb Z^n$ whose $i$th component is defined by  
$$g_i(\mathcal M) \coloneqq   -\dim \Hom_J(S_i,M)+\dim \Ext_J^1(S_i,M)+\dim V_i.$$

\item [(ii)] The {\em $F$-polynomial} of $\mathcal M=(M,V)$ is defined as follows:
\[ F_{\mathcal M}(y_1,\ldots,y_n) \coloneqq   \sum_{{\bf v}\in\mathbb N^n}\chi(\Gr_{\bf v}(M)){\bf y}^{\bf v}\;\;\in\mathbb Z[y_1,\ldots,y_n],
\]
where $\Gr_{\bf v}(M)$ is the submodule Grassmannian of $M$ with dimension vector ${\bf v}$ and $\chi$ is the Euler–Poincar\'{e} characteristic.

\item[(iii)]  Let $\langle-,-\rangle$ be the standard inner product on $\mathbb R^n$. We define two integers:
\begin{eqnarray}\label{eqn:E-inj}
    E^{\rm inj}(\mathcal M,\mathcal M') &\coloneqq&   \dim \Hom_J(M,M')+\langle \underline{\dim} M ,{\bf g}(\mathcal M')\rangle,\\
    E^{\rm sym}(\mathcal M,\mathcal M') &\coloneqq&    E^{\rm inj}(\mathcal M,\mathcal M')+E^{\rm inj}(\mathcal M',\mathcal M),\label{eqn:E-sym}
\end{eqnarray}
which are called  the {\em partial $E$-invariant} and  {\em $E$-invariant} of the pair $(\mathcal M,\mathcal M')$.
    \end{itemize}
\end{definition}

\begin{remark}\label{rmk:E-inv}
 It is easy to see that the $F$-polynomial $F_{\mathcal M}\in\mathbb Z[y_1,\ldots,y_n]$ has a unique term of maximal degree ${\bf y}^{\underline{\dim }M}$ with coefficient $1$. Every other monomial in $F_{\mathcal M}$ is a factor of ${\bf y}^{\underline{\dim }M}$.
\end{remark}

\begin{corollary}\label{cor:E-inv}
 Let $\mathcal M=(M,V)$ and  $\mathcal M'=(M',V')$ be two decorated representations of $J=J(Q,W)$. If 
 $\mathcal M=(M,V)$ is negative,  \ie $M=0$ and  $V=(\mathbb C^{a_1},\ldots,\mathbb C^{a_n})$ for some ${\bf a}=(a_1,\ldots,a_n)^T\in\mathbb Z_{\geq 0}^n$, then we have
 \[
E^{\rm inj}(\mathcal M,\mathcal M')=0,\;\;
 E^{\rm inj}(\mathcal M',\mathcal M)=\langle \underline{\dim} M', {\bf a}\rangle=E^{\rm sym}(\mathcal M,\mathcal M').
 \]
\end{corollary}
\begin{proof}
By definition, we have ${\bf g}(\mathcal M)={\bf a}$. Then the results from \eqref{eqn:E-inj} and \eqref{eqn:E-sym}.
\end{proof}

\begin{proposition}
[{\cite[Cor.\ 10.9]{DWZ10}, \cite[Lem.\ 3.4]{CLS-2015}}]
Let $\mathcal M=(M, V)$ and $\mathcal M'=(M', V')$ be two decorated representations of $J=J(Q,W)$. Let $d_i(V)=\dim V_i$.  Suppose that  $(Q,W)$ is Jacobi-finite. Then the following statements hold.

\begin{itemize}
\item[(i)] We have  \[E^{\rm inj}(\mathcal M,\mathcal M')=\dim\Hom_J(\tau^{-1}M', M)+
\langle \underline{\dim} M, \underline{\dim} V'\rangle
.\]  In particular, if $V'=0=V$, we have 
\[
E^{\rm inj}(\mathcal M,\mathcal M')=\dim\Hom_J(\tau^{-1}M', M),\;\; E^{\rm inj}(\mathcal M',\mathcal M)=\dim\Hom_J(\tau^{-1}M, M')
\]
and hence \[E^{\rm sym}(\mathcal M,\mathcal M')=\dim\Hom_J(\tau^{-1}M', M)+\dim\Hom_J(\tau^{-1}M, M').\]

  \item[(ii)] Let \[0\rightarrow M\rightarrow \oplus_{i=1}^nI_i^{a_i}\rightarrow \oplus_{i=1}^nI_i^{b_i}\] be  a minimal injective presentation of $M$. Then the $i$th component  of the $g$-vector ${\bf g}(\mathcal M)$ of $\mathcal M$ is given by
  \[g_i(\mathcal M)=-a_i+b_i+d_i(V).
  \]  
\end{itemize}
\end{proposition}

\begin{remark}
We remark that the above results can be extended to the Jacobian-infinite  setting by considering the truncated Jacobian algebras, which are finite-dimensional,  \confer \cite[{Prop.\ 3.5, Lem.\ 3.4}]{CLS-2015}.  In particular, for any two decorated representations $\mathcal M$ and $\mathcal M'$ of $(Q,W)$, we have 
\[
E^{\rm inj}(\mathcal M,\mathcal M')\geq 0 \;\;\;\text{and}\;\;\;\;E^{\rm sym}(\mathcal M,\mathcal M')\geq 0.
\]
\end{remark}

\subsection{Mutation-invariance of $E$-invariant} 
We fix a quiver with potential $(Q,W)$ and assign it to the rooted vertex $t_0$ of the $n$-regular tree $\mathbb T_n$. We will assume that $W$ is a {\em non-degenerate potential} \cite[Section 7]{DWZ08} so that we can obtain a family of quivers with potentials 
\[ [Q,W] \coloneqq   \{(Q_t,W_t)\mid t\in\mathbb T_n\} \;\;\;\text{with}\;\;\; (Q_{t_0},W_{t_0})=(Q,W)\]
by  {\em mutations of quivers with potentials} \cite[Section 5]{DWZ08}. Denote by $J_t=J(Q_t,W_t)$ the Jacobian algebra of $(Q_t,W_t)$.

 In \cite[Section 10]{DWZ08} Derkesn, Weyman and Zelevinsky introduced {\em mutations of decorated representations}, which are generalizations of BGP-reflections.
 Let  $\mathcal M=(M,V)$ be a decorated representation of $(Q,W)=(Q_{t_0},W_{t_0})$. We can produce a family of decorated representations 
 \[[\mathcal M] \coloneqq   \{\mathcal M^t=(M^t,V^t)\mid t\in\mathbb T_n\}\;\;\;\text{with}\;\;\;\; \mathcal M^{t_0}=\mathcal M
 \]
  by mutations of decorated representations, where $\mathcal M^t$ is a decorated representation of the quiver with potential $(Q_t,W_t)$ at vertex $t\in\mathbb T_n$. The decorated representation $\mathcal M$ of $(Q,W)$ is said to be {\em negative-reachable}, if there exists a vertex $w\in \mathbb T_n$ such that $\mathcal M^w=(M^w,V^w)$ is  negative,  \ie $M^w=0$. 

\begin{remark}
The negative-reachable decorated representations of $(Q,W)$ play the role of cluster monomials in the additive categorification of cluster algebras and  the negative simple ones play the role of initial cluster variables. See Theorem \ref{thm:DWZ-g-F} below for a precise statement.
\end{remark}

\begin{theorem}[{\cite[Thm.\ 7.1]{DWZ10}}] \label{thm:E-inv}
Let $\mathcal M=(M,V)$ and $\mathcal N=(N,U)$ be two decorated representations of $(Q,W)$, and let 
$\{\mathcal M^t\mid t\in\mathbb T_n\}$ and $\{\mathcal N^t\mid t\in\mathbb T_n\}$ be the families of decorated representations 
corresponding to $\mathcal M$ and $\mathcal N$ respectively. Then for any two vertices $t,t'\in\mathbb T_n$, we have
  \begin{eqnarray}
      E^{\rm sym}(\mathcal M^t,\mathcal N^t)=E^{\rm sym}(\mathcal M^{t'},\mathcal N^{t'}).\nonumber
  \end{eqnarray}
In particular, the $E$-invariant is a mutation invariant.
\end{theorem}

\subsection{Comparison with $E$-invariant}
In this subsection, we show that $E$-invariant coincides with $F$-invariant for skew-symmetric cluster algebras.
\begin{setting}
 (i) Let $(Q,W)=(Q_{t_0},W_{t_0})$ be a quiver with  non-degenerate potential and $\mathcal M=\mathcal M^{t_0}=(M^{t_0},V^{t_0})$ a decorated representation of $(Q_{t_0},W_{t_0})$. By applying mutations, we have a family of quivers with potentials and a family of decorated representations
\begin{eqnarray}
   [(Q,W)] \coloneqq   \{(Q_t,W_t)\mid t\in\mathbb T_n\},\;\;\;\;[\mathcal M] \coloneqq   \{\mathcal M^t\mid t\in\mathbb T_n\},\nonumber
\end{eqnarray}
where $\mathcal M^t=(M^t, V^t)$ is a decorated representation of $(Q_t,W_t)$.

(ii) Let $B_Q$ be the skew-symmetric matrix corresponding to $Q$ (see \ref{eqn:Q-B-2}), and $\mathcal A_Q$ the cluster algebra with trivial coefficients such that its exchange matrix at vertex $t_0$ is $B_Q$. 

(iii) For a cluster monomial $u={\bf x}_w^{\bf a}=\prod_{i=1}^nx_{i;w}^{a_i}$ of $\mathcal A_Q$, let $\{\mathcal M_u^t\mid t\in\mathbb T_n\}$ be the family of decorated representations determined by the following conditions:
\begin{itemize}
    \item For the vertex $w\in\mathbb T_n$, set $\mathcal M_u^w=(M_u^w,V_u^w)$ to be the negative decorated representation of 
    $(Q_w,W_w)$ given by
$M_u^w=0$ and $V_u^w=(\mathbb C^{a_1},\ldots,\mathbb C^{a_n})$.
\item For any edge \begin{xy}(0,1)*+{t}="A",(10,1)*+{t'}="B",\ar@{-}^k"A";"B" \end{xy}  in ${\mathbb T}_n$, $\mathcal M_u^t$ and $\mathcal M_u^{t'}$ are related by mutation in direction $k$.
\end{itemize}
\end{setting}

The following theorem summarizes some results on categorification of cluster algebras using decorated representations of quivers with potentials
by Derksen, Weyman and Zelevinsky \cite{DWZ10}.

\begin{theorem}[{\cite{DWZ10}}]
\label{thm:DWZ-g-F}
Keep the above setting. The following statements hold.
\begin{itemize}
    \item [(i)] The family $\{\mathcal M_u^t=(M_u^t,V_u^t)\mid t\in\mathbb T_n\}$  of decorated representations only depends on $u$, not on the choice of $w\in\mathbb T_n$ and ${\bf a}\in\mathbb Z_{\geq 0}^n$ such that $u={\bf x}_w^{\bf a}$.
    \item [(ii)] The correspondence $u\mapsto \{\mathcal M_u^t\mid t\in\mathbb T_n\}\mapsto \mathcal M_u:= \mathcal M_u^{t_0}$ induces a bijection from  cluster monomials of $\mathcal A_Q$ to the isomorphism classes of negative-reachable decorated representations of $(Q,W)=(Q_{t_0},W_{t_0})$. 
    \item[(iii)] For any vertex $t\in\mathbb T_n$, we have 
    \[{\bf g}_u^t={\bf g}(\mathcal M_u^t)\;\;\;\text{and}\;\;\;F_u^t(y_1,\ldots,y_n)=F_{\mathcal M_u^t}(y_1,\ldots,y_n).
    \]
    In particular, ${\bf f}_u^t=\underline{\dim}(M_u^t)$, where ${\bf f}_u^t$ is the $f$-vector of $u$ with respect to vertex $t$ and the dimension vector here is viewed as column vector.
\end{itemize}
\end{theorem}

We take $S=I_n$ to be the fixed skew-symmetrizer for the exchange matrices of $\mathcal A_Q$, when we consider the $F$-invariant for cluster monomials in $\mathcal A_Q$.

\begin{theorem}\label{thm:F=E}
Let $(Q,W)$ be a quiver with non-degenerate potential and $\mathcal A_Q$ the corresponding cluster algebra with trivial coefficients. 
Let $u, u'$ be two cluster monomials of $\mathcal A_Q$, and let  $\{\mathcal M_u^t\mid t\in\mathbb T_n\}$ and $\{\mathcal M_{u'}^t\mid t\in\mathbb T_n\}$ be the families of decorated representations corresponding to $u$ and $u'$. Then for any vertex $t\in\mathbb T_n$, we have
\[(u\mid\mid u')_F=E^{\rm sym}(\mathcal M_u^t,\mathcal M_{u'}^t).\] 
In particular, we have $(u\mid\mid u')_F=E^{\rm sym}(\mathcal M_u,\mathcal M_{u'})$, where $\mathcal M_u:=\mathcal M_u^{t_0}$ and $\mathcal M_{u'}:=\mathcal M_{u'}^{t_0}$ are the decorated representations of $(Q,W)$ corresponding to $u$ and $u'$.
\end{theorem}
\begin{proof}
We can assume that $u$ is a cluster monomial in ${\bf x}_{w}$ for some $w\in\mathbb T_n$, say $u=\prod_{i=1}^nx_{i;w}^{a_i}$, where ${\bf a}=(a_1,\ldots,a_n)^T\in\mathbb Z_{\geq 0}^n$. In this case, $\mathcal M_u^w=(M_u^w,V_u^w)$ is given by $M_u^w=0$, $V_u^w=(\mathbb C^{a_1},\ldots, \mathbb C^{a_n})$. Thus we have
 \[
 F_u^{w}=1=F_{\mathcal M_u^w},\;\;\;\text{and}\;\;\;{\bf g}_{u}^{w}={\bf a}={\bf g}({\mathcal M_u^w}).
 \]
By using the vertex $w$ to calculate  $(u\mid\mid u')_F$, we have
\begin{eqnarray}
    (u\mid\mid u')_F=0+F_{u'}^{w}[{\bf g}_u^{w}]=F_{u'}^{w}[{\bf a}]=({\bf f}_{u'}^{w})^T{\bf a}=\langle {\bf f}_{u'}^{w}, {\bf a}\rangle,\nonumber
\end{eqnarray}
where ${\bf f}_{u'}^{w}$ is the $f$-vector of $u'$ with respect to the vertex $w$ and $\langle-,-\rangle$ is the standard inner product on $\mathbb R^n$. On the other hand,  since $\mathcal M_u^w$ is negative and  by Corollary \ref{cor:E-inv}, we have 
$$E^{\rm sym}(\mathcal M_u^w,\mathcal M_{u'}^w)=\langle \underline{\dim} M_{u'}^w, {\bf a}\rangle.$$ By Theorem \ref{thm:DWZ-g-F} (iii), we have ${\bf f}_{u'}^{w}=\underline\dim(M_{u'}^w)$ and thus $(u\mid\mid u')_F=E^{\rm sym}(\mathcal M_u^w,\mathcal M_{u'}^w)$.
 Then by the mutation-invariance of the $E$-invariant, we have \[(u\mid\mid u')_F=E^{\rm sym}(\mathcal M_u^t,\mathcal M_{u'}^t)\] for any vertex $t\in\mathbb T_n$.
\end{proof}

\section{Dominant sets and oriented exchange graphs}\label{sec-dom-sets}
In this section, we introduce the dominant sets for seeds of cluster algebras as a replacement of torsion classes for $\tau$-tilting pairs in $\tau$-tilting theory. Then we give applications to the exchange graphs of cluster algebras. 

Throughout this section, we will focus on cluster variables and cluster monomials. For this reason, we simply work on cluster algebra with trivial coefficients, rather than $\mathsf{\Lambda}$-cluster algebras. By Proposition \ref{pro:trivial-coef}, the $F$-invariant between cluster monomials is well-defined in this case.

 \subsection{Oriented exchange graphs are acyclic}\label{sec51}

 Let $B$ be an $n\times n$ skew-symmetrizable matrix with a fixed skew-symmetrizer $S=diag(s_1,\ldots,s_n)$. Let $\mathcal A$ be the cluster algebra with trivial coefficients whose initial seed is $({\bf x}, B)$. The seed of $\mathcal A$ at vertex $t\in\mathbb T_n$ is denoted by $({\bf x}_t, B_t)$. In particular, we have $({\bf x}_{t_0}, B_{t_0})=({\bf x}, B)$.

Two seeds $({\bf x}_t,  B_t)$ and  $({\bf x}_w,  B_w)$ of $\mathcal A$ are said to be \emph{equivalent} if they are the same up to relabeling. We denote by  $[{\bf x}_t, B_t]$ the equivalent class of $({\bf x}_t,  B_t)$.

The \emph{exchange graph} $\mathcal H=\mathcal H(\mathcal A)$ of $\Acal$ is the graph whose vertices correspond to the seeds (up to seed equivalence) of $\mathcal A$ and whose edges correspond to seed mutations.

For a seed  $({\bf x}_t, B_t)$ of $\mathcal A$, we denote by  $C_t=C_t^{t_0}$ the $C$-matrix of the vertex $t$ with respect to the rooted vertex $t_0$.
\begin{definition}[Green mutation and oriented exchange graph] 
(i) A seed mutation $\mu_k({\bf x}_t, B_t)$ in $\mathcal A$ is called a {\em green mutation}, if the $k$th column of the $C$-matrix $C_t=C_t^{t_0}$ is a non-negative vector. Otherwise, it is called a {\em red mutation}.

  (ii) The  {\em oriented exchange graph} of $\mathcal A$ with initial seed $({\bf x}_{t_0}, B_{t_0})$ is the quiver $\overrightarrow{\mathcal H}^{t_0}=\overrightarrow{\mathcal H}^{t_0}(\mathcal A)$ whose vertices correspond to the seeds  (up to seed equivalence) of $\mathcal A$  and whose arrows correspond to green mutations.
\end{definition}

We remark that if two seeds $({\bf x}_t, B_t)$ and $({\bf x}_w, B_w)$ are related by once mutation, then either the mutation from $({\bf x}_t, B_t)$ to  $({\bf x}_w, B_w)$ is a green mutation or the mutation from $({\bf x}_w, B_w)$ to  $({\bf x}_t, B_t)$ is a green mutation.

Denote by $\mathcal X$ the set of  cluster variables of $\mathcal A$. Given a cluster monomial $u={\bf x}_t^{\bf h}$ in ${\bf x}_t$, that is, ${\bf h}=(h_1,\ldots,h_n)^T\in\mathbb Z_{\geq 0}^n$, we define two subsets of $\mathcal X$:
\begin{itemize}
    \item The subset $\supp(u)=\{x_{i;t}\mid i\in[1,n], \;h_i\neq 0\}\subseteq \mathcal X$ is called the {\em support} of $u={\bf x}_t^{\bf h}$. We remark that this set only depends on $u$, not on the choice of $t\in\mathbb T_n$ and ${\bf h}\in\mathbb Z^{n}$ such that $u={\bf x}_t^{\bf h}$.
    \item For a vertex $w\in\mathbb T_n$ and a cluster variable $z$, recall from Proposition \ref{pro:trivial-coef} that $$(z\mid\mid u)_F=F_z^w[S{\bf g}_u^w]+F_u^w[S{\bf g}_z^w].$$
   The subset
 $\dom^w(u)=\{z\in\mathcal X\mid F_z^{w}[S{\bf g}_u^{w}]=0\}\subseteq \mathcal X$  is called the {\em dominant set} of $u={\bf x}_t^{\bf h}$ with respect to vertex $w\in\mathbb T_n$. 
\end{itemize}

\begin{remark}
    Notice that $F_z^w[S{\bf g}_u^w]$ is the partial $F$-invariant of the pair $(z,u)$ at vertex $w\in\mathbb T_n$. That is, the dominant set is defined using  the vanishing condition of the partial $F$-invariant.
\end{remark}

\begin{definition}[Dominant set of a seed]
    Let  $({\bf x}_t, B_t)$ be a seed of $\mathcal A$. The {\em dominant set} $\dom^{w}[t]$ of $({\bf x}_t, B_t)$ with respect to a vertex $w\in\mathbb T_n$ is defined to be the dominant set of the  multiplicity free cluster monomial $u_t=\prod_{i=1}^nx_{i;t}$ with full support,  \ie
    \[
    \dom^{w}[t] \coloneqq   \dom^{w}(u_t)=\{z\in\mathcal X\mid F_z^{w}[S{\bf g}_{u_t}^{w}]=0\}.
    \]
\end{definition}
Clearly, if $[{\bf x}_{t'}, B_{t'}]=[{\bf x}_t, B_t]$,  \ie the two seeds are equivalent, then $\dom^w[t']=\dom^w[t]$. If we want to view a cluster ${\bf x}_t$ as a set, we will write
 $$[{\bf x}_t] \coloneqq   \{x_{1;t},\ldots,x_{n;t}\}.$$

\begin{proposition}\label{pro:dom-supp}
Let $u={\bf x}_t^{\bf h}$ be a cluster monomial of  $\mathcal A$ in seed $({\bf x}_t,B_t)$.  Then the following statements hold for any vertex $w\in\mathbb T_n$.

\begin{itemize}
\item [(i)] $[{\bf x}_w]\subseteq\dom^w(u)$ and in particular, $[{\bf x}_w]\subseteq\dom^w[t]$. \vspace{1.5mm}

    \item [(ii)] $[{\bf x}_t]\subseteq\dom^w(u)$ and in particular,   $[{\bf x}_t]\subseteq\dom^w[t]$. \vspace{1.5mm}
    
    \item[(iii)] $\dom^w(u)=\bigcap_{x\in\supp(u)}\dom^w(x)$ and in particular, $\dom^w[t]=\bigcap_{i=1}^n\dom^w(x_{i;t})$.
\end{itemize}
\end{proposition}
\begin{proof}
(i) Since $F_{x_{i;w}}^w=1$ for $i\in[1,n]$, we have $F_{x_{i;w}}^{w}[S{\bf g}_u^{w}]=0$. Thus $x_{i;w}\in\dom^w(u)$ for $i\in[1,n]$, that is, $[{\bf x}_w]\subseteq\dom^w(u)$.  In particular, if we take $u=u_t=\prod_{i=1}^nx_{i;t}$,  we obtain $$[{\bf x}_w]\subseteq\dom^{w}(u_t)=\dom^{w}[t].$$

(ii) Fix $i\in[1,n]$. By using the vertex $t$ to calculate  $(x_{i;t}\mid\mid u)_F=(x_{i;t}\mid\mid {\bf x}_t^{\bf h})_F$, we see $(x_{i;t}\mid\mid u)_F=0$. On the other hand, we know that
 $$(x_{i;t}\mid\mid u)_F=F_{x_{i;t}}^{w}[S{\bf g}_{u}^{w}]+F_{u}^{w}[S{\bf g}_{x_{i;t}}^{w}]$$ and 
 each term in this sum is non-negative. Thus we obtain $F_{x_{i;t}}^{w}[S{\bf g}_{u}^{w}]=0$,  \ie $x_{i;t}\in\dom^{w}(u)$. Since $i\in[1,n]$ is arbitrary, we get $[{\bf x}_t]\subseteq \dom^{w}(u)$. In particular, if we take $u=u_t=\prod_{i=1}^nx_{i;t}$,  we obtain $[{\bf x}_t]\subseteq\dom^{w}(u_t)=\dom^{w}[t]$.

(iii) By definition, a cluster variable $z$ belongs to $\dom^w(u)$ if and only if $$F_z^{w}[S{\bf g}_u^{w}]=0.$$ By Proposition \ref{pro:bigood} (ii), we have
\[F_{z}^w[S{\bf g}_{u}^w]=F_{z}^w[S{\bf g}_{{\bf x}_t^{\bf h}}^w]=\sum_{j=1}^nh_{j}\cdot F_{z}^w[S{\bf g}_{x_{j;t}}^w].\]
Since $h_j\geq 0$ and $F_{z}^w[S{\bf g}_{x_{j;t}}^w]\geq 0$ for any $j\in[1,n]$, we know that $F_z^{w}[S{\bf g}_u^{w}]=0$ if and only if
$F_z^{w}[S{\bf g}_{x_{j;t}}^{w}]=0$ for any $j\in[1,n]$ with $h_j\neq 0$,  \ie for any $x_{j;t}\in\supp(u)$. This is equivalent to say that $z$ belongs to  $\bigcap_{x\in\supp(u)}\dom^w(x)$. 
Hence, $\dom^w(u)=\bigcap_{x\in\supp(u)}\dom^w(x)$. In particular, if we take  $u=u_t=\prod_{i=1}^nx_{i;t}$, we obtain $\dom^w[t]=\dom^w(u_t)=\bigcap_{i=1}^n\dom^w(x_{i;t})$.
\end{proof}

\begin{corollary}\label{cor-supp}
  Let $u$ and $u'$ be two cluster monomials of $\mathcal A$ such that $\supp(u)=\supp(u')$. Then $\dom^w(u)=\dom^w(u')$ for any vertex $w\in\mathbb T_n$. In particular, $\dom^w[t]=\dom^w(u)$ for any cluster monomial $u$ with $\supp(u)=[{\bf x}_t]$.
\end{corollary}
\begin{proof}
This follows from Proposition \ref{pro:dom-supp} (iii).
\end{proof}

For a seed  $({\bf x}_t, B_t)$ of $\mathcal A$, we denote by   $G_t=G_t^{t_0}$ the $G$-matrix of the vertex $t$ with respect to the rooted vertex $t_0$ and by
\[ \mathbb R_{\geq 0}G_t \coloneqq   \mathbb R_{\geq 0}{\bf g}_{1;t}+\cdots+\mathbb R_{\geq 0}{\bf g}_{n;t}\subseteq \mathbb R^n\]
 the cone in $\mathbb R^n$ spanned by the columns of $G_t=({\bf g}_{1;t},\ldots,{\bf g}_{n;t})$. We also denote by 
\begin{align}
    \mathbb Q_{>0}G_t& \coloneqq  \mathbb Q_{>0}{\bf g}_{1;t}+\cdots+\mathbb Q_{>0}{\bf g}_{n;t}\subseteq \mathbb Q^n,\nonumber\\
    \mathbb Z_{\geq 1}G_t& \coloneqq  \mathbb Z_{\geq 1}{\bf g}_{1;t}+\cdots+\mathbb Z_{\geq 1}{\bf g}_{n;t}\subseteq \mathbb Z^n.\nonumber
\end{align}

 \begin{lemma}\label{lem:v-v}
     Let $({\bf x}_{t'}, B_{t'})=\mu_k({\bf x}_t,  B_t)$ be a green mutation in $\mathcal A$. 
      Then there exist two vectors ${\bf v},{\bf v}'\in\mathbb Z_{\geq 1}^n$ such that $G_t{\bf v}-G_{t'}{\bf v}'\in\mathbb Z_{\geq 0}^n$.
 \end{lemma}
\begin{proof}
Without loss of  generality, we can assume that $k=1$. We write $G_t=({\bf g}_1,{\bf g}_2,\ldots,{\bf g}_n)$, where ${\bf g}_i\in\mathbb Z^n$. Since $({\bf x}_{t'}, B_{t'})=\mu_k({\bf x}_t, B_t)=\mu_1({\bf x}_t, B_t)$, the $G$-matrix $G_{t'}$ has the form $G_{t'}=({\bf g}_1',{\bf g}_2,\ldots,{\bf g}_n)$.

By Theorem \ref{thm:CG} (i), we know that $\mathbb R_{\geq 0}G_t$ and $\mathbb R_{\geq 0}G_{t'}$ are two cones of dimension $n$ in $\mathbb R^n$ and they have a common facet $\mathcal C$ (cone of dimension $n-1$) given by
$\mathcal C=\mathbb R_{\geq 0}{\bf g}_2+\ldots+\mathbb R_{\geq 0}{\bf g}_n$.

 Let $S=diag(s_1,\ldots,s_n)$ be the fixed skew-symmetrizer for the exchange matrices of $\mathcal A$ and ${\bf c}_{1;t}$ the first column of the $C$-matrix $C_t=C_t^{t_0}$.  By Theorem \ref{thm:CG} (iii), we have $SC_tS^{-1}(G_t)^{\rm T}=I_n$, equivalently, $(G_t)^{\rm T}(SC_tS^{-1})=I_n$. So  the first column vector $s_1^{-1}(S{\bf c}_{1;t})$ of $SC_tS^{-1}$ is an inner normal vector of the facet $\mathcal C$ of the cone $\mathbb R_{\geq 0}G_t$. So when we go along the direction $s_1^{-1}(S{\bf c}_{1;t})$, we can cross the common facet $\mathcal C$ from the cone $\mathbb R_{\geq 0}G_{t'}$ to the cone $\mathbb R_{\geq 0}G_{t}$.

 Now we take a vector ${\bf h}'\in\mathbb Z_{\geq 1}G_{t'}\subseteq \mathbb R_{\geq 0}G_{t'}$. Then there exists $\lambda\in\mathbb Q_{>0}$  such that $${\bf h} \coloneqq   {\bf h}'+\lambda(s_1^{-1}(S{\bf c}_{1;t}))\in\mathbb Q_{>0}G_t\subseteq \mathbb R_{\geq 0}G_t.$$ We can take a positive integer $p$ large enough such that $p{\bf h}\in \mathbb Z_{\geq 1}G_t$.
 Since $p{\bf h}\in \mathbb Z_{\geq 1}G_t\subseteq \mathbb Z^n$ 
 and $p{\bf h}'\in\mathbb Z_{\geq 1}G_{t'}\subseteq \mathbb Z^n$, we have $p\lambda s_1^{-1}(S{\bf c}_{1;t})=p{\bf h}-p{\bf h}'\in\mathbb Z^n$. Since $({\bf x}_{t'}, B_{t'})=\mu_k({\bf x}_t, B_t)$ is a green mutation, we have ${\bf c}_{1;t}\in\mathbb Z_{\geq 0}^n$. So we have $p{\bf h}-p{\bf h}'=p\lambda s_1^{-1}(S{\bf c}_{1;t})\in\mathbb Z_{\geq 0}^n$. 

 Since $p{\bf h}\in\mathbb Z_{\geq 1}G_t$ and $p{\bf h}'\in\mathbb Z_{\geq 1}G_{t'}$, there exist ${\bf v},{\bf v}'\in\mathbb Z_{\geq 1}^n$ such that 
 \[
 p{\bf h}=G_t{\bf v}\;\;\;\text{and}\;\;\; p{\bf h}'=G_{t'}{\bf v}'.
 \]
 Thus $G_t{\bf v}-G_{t'}{\bf v}'=p{\bf h}-p{\bf h}'\in\mathbb Z_{\geq 0}^n$. 
\end{proof}

\begin{lemma}\label{lem:r-r}
 Let $F$ be a polynomial in $\mathbb Z[y_1,\ldots,y_n]$ with constant term $1$.   If  $({\bf r}-{\bf r}')\in\mathbb Z_{\geq 0}^n$ for some ${\bf r},{\bf r}'\in\mathbb Z^n$ and $F[{\bf r}]=0$, then $F[{\bf r}']=0$.
\end{lemma}
    \begin{proof}
Let ${\bf y}^{\bf u}$ be a monomial appearing in $F$. We know that ${\bf u}\in\mathbb Z_{\geq 0}^n$. Since $({\bf r}-{\bf r}')\in\mathbb Z_{\geq 0}^n$, we have ${\bf u}^T({\bf r}'-{\bf r})=-{\bf u}^T({\bf r}-{\bf r}')\leq 0$. Since $F[{\bf r}]=0$, we have ${\bf u}^T{\bf r}\leq 0$. Thus \[{\bf u}^T{\bf r}'={\bf u}^T({\bf r}'-{\bf r}+{\bf r})={\bf u}^T({\bf r}'-{\bf r})+{\bf u}^T{\bf r}\leq 0.\]
Since $F$ has constant term $1$ and by the definition of $F[{\bf r}']$, we obtain $F[{\bf r}']=0$.
  \end{proof}

\begin{theorem}\label{thm:graph}
    If $({\bf x}_{t'}, B_{t'})=\mu_k({\bf x}_t, B_t)$ is a green mutation, then $\dom^{t_0}[t]\subsetneq \dom^{t_0}[t']$. In particular, the oriented exchange graph $\overrightarrow{\mathcal H}^{t_0}=\overrightarrow{\mathcal H}^{t_0}(\mathcal A)$ of $\mathcal A$ is acyclic.
\end{theorem}
\begin{proof}
Since $({\bf x}_{t'},B_{t'})=\mu_k({\bf x}_t, B_t)$ is a green mutation and by Lemma \ref{lem:v-v}, there exist 
$${\bf v}=(v_1,\ldots,v_n)^T,\; {\bf v}'=(v_1',\ldots,v_n')^T\in\mathbb Z_{\geq 1}^n$$ such that $G_t{\bf v}-G_{t'}{\bf v}'\in\mathbb Z_{\geq 0}^n$. Set $u=\prod_{i=1}^nx_{i;t}^{v_i}$ and $u'=\prod_{i=1}^nx_{i;t'}^{v_i'}$. We know that
$${\bf g}_u^{t_0}=G_t{\bf v},\;\;{\bf g}_{u'}^{t_0}=G_{t'}{\bf v}'.$$ 
Since $\supp(u)=[{\bf x}_t],\;\supp(u')=[{\bf x}_{t'}]$ and by Corollary \ref{cor-supp}, we have that
\begin{align}
  \dom^{t_0}[t]&=\dom^{t_0}(u)=\{z\in\mathcal X\mid 
  F_z^{t_0}[S{\bf g}_u^{t_0}]=0\}=\{z\in\mathcal X\mid 
  F_z^{t_0}[SG_t{\bf v}]=0\},\nonumber\\
   \dom^{t_0}[t']&=\dom^{t_0}(u')=\{z\in\mathcal X\mid 
  F_z^{t_0}[S{\bf g}_{u'}^{t_0}]=0\}=\{z\in\mathcal X\mid 
  F_z^{t_0}[SG_{t'}{\bf v}']=0\}.\nonumber
\end{align}
Let $z\in\dom^{t_0}[t]$, that is, we have
$F_z^{t_0}[SG_t{\bf v}]=0$. Since
 $$SG_t{\bf v}-SG_{t'}{\bf v}'=S(G_t{\bf v}-G_{t'}{\bf v}')\in\mathbb Z_{\geq 0}^n$$
 and  by Lemma \ref{lem:r-r}, we get $F_z^{t_0}[SG_{t'}{\bf v}']=0$. This implies that $z\in\dom^{t_0}[t']$. So we have $\dom^{t_0}[t]\subseteq \dom^{t_0}[t']$. 
 
In the rest part of the proof, we will show  $x_{k;t'}\in\dom^{t_0}[t']$ but $x_{k;t'}\notin\dom^{t_0}[t]$. Thus the inclusion $\dom^{t_0}[t]\subseteq \dom^{t_0}[t']$ is strict.

By Proposition \ref{pro:dom-supp} (ii),  we have $[{\bf x}_t]\subseteq \dom^{t_0}[t]$ and $[{\bf x}_{t'}]\subseteq \dom^{t_0}[t']$. In particular, $$x_{k;t}\in \dom^{t_0}[t]\;\;\; \text{and}\;\;\; x_{k;t'}\in\dom^{t_0}[t'].$$ Since $\dom^{t_0}[t]\subseteq \dom^{t_0}[t']$, we get $x_{k;t}\in\dom^{t_0}[t']$. Then by Proposition \ref{pro:dom-supp} (iii), we have
$x_{k;t}\in\dom^{t_0}[t']\subseteq \dom^{t_0}(x_{k;t'})$. This implies
$$F_{x_{k;t}}^{t_0}[S{\bf g}_{x_{k;t'}}^{t_0}]=0.$$
By using the vertex $t_0$ to calculate the $F$-invariant $(x_{k;t}\mid\mid x_{k;t'})_F$, we have $$(x_{k;t}\mid\mid x_{k;t'})_F=F_{x_{k;t}}^{t_0}[S{\bf g}_{x_{k;t}'}^{t_0}]+F_{x_{k;t'}}^{t_0}[S{\bf g}_{x_{k;t}}^{t_0}]=F_{x_{k;t'}}^{t_0}[S{\bf g}_{x_{k;t}}^{t_0}].$$
On the other hand, if we use the vertex $t$ to calculate  $(x_{k;t}\mid\mid x_{k;t'})_F$, we see that
$$(x_{k;t}\mid\mid x_{k;t'})_F=0+F_{x_{k;t'}}^{t}[S{\bf g}_{x_{k;t}}^{t}]=(1+y_k)[S{\bf e}_k]=s_k,$$
where ${\bf e}_k$ is the $k$th column of $I_n$ and $s_k$ is the $(k,k)$-entry of $S=diag(s_1,\ldots,s_n)$. Thus $$F_{x_{k;t'}}^{t_0}[S{\bf g}_{x_{k;t}}^{t_0}]=(x_{k;t}\mid\mid x_{k;t'})_F=s_k>0,$$ which means $x_{k;t'}\notin \dom^{t_0}(x_{k;t})$. By Proposition \ref{pro:dom-supp} (iii), we know that $\dom^{t_0}[t]\subseteq \dom^{t_0}(x_{k;t})$. Thus $x_{k;t'}\notin \dom^{t_0}(x_{k;t})$ implies that $x_{k;t'}\notin \dom^{t_0}[t]$.

Since $x_{k;t'}\in\dom^{t_0}[t']$ but $x_{k;t'}\notin \dom^{t_0}[t]$, we know that the inclusion $\dom^{t_0}[t]\subseteq \dom^{t_0}[t']$ is strict,  \ie we have  $\dom^{t_0}[t]\subsetneq \dom^{t_0}[t']$.
\end{proof}

\begin{example}\label{ex:A2-dom}
    Take $B=\begin{bmatrix}
    0&1\\-1&0
\end{bmatrix}$, ${\bf x}=(x_1,x_2)$ and $S=I_2$. 
The oriented exchange graph $\overrightarrow{\mathcal H}^{t_0}$ of $\mathcal A=\mathcal A({\bf x}, B)$ is as follows:
\[
\xymatrix{&t_0=\{x_1,x_2\}\ar[ld]_{\mu_{x_1}}\ar[rd]^{\mu_{x_2}}&\\
t_1=\{{x_3},x_2\}\ar[d]_{\mu_{x_2}}&&t_4=\{x_5,x_1\}\ar[d]^{\mu_{x_1}}\\
t_2=\{x_3,{x_4}\}\ar[rr]^{\mu_{x_3}}&&t_3=\{{x_5},x_4\}}
\]
where $x_3=x_1^{-1}x_2\cdot(1+\widehat y_1),  x_4=x_1^{-1}\cdot (1+\widehat y_1+\widehat y_1\widehat y_2),
    x_5=x_2^{-1}\cdot (1+\widehat y_2)$.  It is easy to check that
    \begin{gather}
      \dom^{t_0}(x_1)=\{x_1,x_2,x_5\},\;\;\;
\dom^{t_0}(x_2)=\{x_1,x_2,x_3\},\;\;\;
\dom^{t_0}(x_3)=\{x_1,x_2,x_3,x_4\},\nonumber\\
\dom^{t_0}(x_4)=\{x_1,x_2,x_3,x_4,x_5\}=
\dom^{t_0}(x_5).\nonumber
    \end{gather}
Then by Proposition \ref{pro:dom-supp} (iii), we can get that
    \begin{gather}
        \dom^{t_0}[t_0]=\{x_1, x_2\}, \;\;\dom^{t_0}[t_1]=\{x_1, x_2, x_3\},\;\;\dom^{t_0}[t_2]=\{x_1, x_2, x_3, x_4\},\nonumber\\
        \dom^{t_0}[t_3]=\{x_1, x_2, x_3, x_4, x_5\},\;\;\dom^{t_0}[t_4]=\{x_1, x_2, x_5\}.\nonumber
    \end{gather}
 We can see that the initial cluster variables $x_1,x_2$ are contained in any dominant set with respect to the vertex $t_0$, this is because the $F$-polynomials of initial cluster variables are $1$.
\end{example}

 Let $\mathcal H_0$ be the set of seeds (up to seed equivalence) of $\mathcal A$, equivalently,  $\mathcal H_0$ is the vertex set of the exchange graph of $\mathcal A$. For the rooted vertex $t_0\in\mathbb T_n$, we define an relation $\leq_{t_0}$ on $\mathcal H_0$ as follows: $[{\bf x}_w,B_w]\leq_{t_0} [{\bf x}_t, B_t]$ if and only if there  exists a sequence $\overleftarrow{\mu}$ of green mutations such that $$[\overleftarrow{\mu}({\bf x}_w, B_w)]=[{\bf x}_t, B_t],$$  \ie there exists a  path from the vertex $[{\bf x}_w, B_w]$ to the vertex $[{\bf x}_t, B_t]$ in the oriented exchange graph $\overrightarrow {\mathcal H}^{t_0}=\overrightarrow {\mathcal H}^{t_0}(\mathcal A)$ of $\mathcal A$. 
 
 If $[{\bf x}_w, B_w]\leq_{t_0} [{\bf x}_t, B_t]$ and $[{\bf x}_t, B_t]\neq  [{\bf x}_w, B_w]$, we write $[{\bf x}_w, B_w]<_{t_0} [{\bf x}_t,  B_t]$.

\begin{corollary}
The relation $\leq_{t_0}$ gives a partial order on $\mathcal H_0$,  \ie  $(\mathcal H_0,\leq_{t_0})$ is a poset.
\end{corollary}

\begin{proof}
By Theorem \ref{thm:graph}, the oriented exchange graph of $\mathcal A$ is acyclic. The result follows.
\end{proof}

The {\em Hasse quiver}  of the poset set $(\mathcal H_0,\leq_{t_0})$ is a quiver $\overrightarrow {\mathcal H}'$ defined as follows:

\begin{itemize}
    \item the vertex set of $\overrightarrow {\mathcal H}'$ is $\mathcal H_0$;
    \item $ [{\bf x}_w, B_w]\rightarrow [{\bf x}_t, B_t]$ is an arrow in $\overrightarrow {\mathcal H}'$ if and only if  $ [{\bf x}_w, B_w]<_{t_0}[{\bf x}_t, B_t]$ and there exists no seed $({\bf x}_s, B_s)$ such that $ [{\bf x}_w, B_w]<_{t_0}[{\bf x}_s, B_s]<_{t_0}[{\bf x}_t, B_t]$. In other words, the arrows correspond to the cover relations under the partial order $\leq_{t_0}$.
\end{itemize}
Notice that the Hasse quiver $\overrightarrow{\mathcal H}'$ and the oriented exchange graph $\overrightarrow{\mathcal H}^{t_0}$ of $\mathcal A$ have the same vertex set $\mathcal H_0$. It is easy to see that the arrow set of  $\overrightarrow{\mathcal H}'$ is a subset of the arrow set of $\overrightarrow{\mathcal H}^{t_0}$ of $\mathcal A$. It is natural to ask whether the two quivers coincide. The next subsection aims to answer this question.

\subsection{Oriented exchange graphs are Hasse quivers}\label{sec52}
 In this subsection, we show that the oriented exchange graph 
$\overrightarrow{\mathcal H}^{t_0}=\overrightarrow{\mathcal H}^{t_0}(\mathcal A)$ of $\mathcal A$ coincides with the Hasse quiver $\overrightarrow{\mathcal H}'$ of the poset $(\mathcal H_0,\leq_{t_0})$.

\begin{proposition}\label{pro:p-dom}
Let $({\bf x}_t, B_t)$ be a seed of $\mathcal A$. Denote by 
\begin{align}
    \mathcal P(\dom^{t_0}[t])&=\{x\in\dom^{t_0}[t]\mid F_z^{t_0}[S{\bf g}_x^{t_0}]=0, \;\forall z\in\dom^{t_0}[t]\}\nonumber\\
    &=\{x\in\dom^{t_0}[t]\mid \dom^{t_0}[t]\subseteq\dom^{t_0}(x)\}.\nonumber
\end{align}
Then 
$\mathcal P(\dom^{t_0}[t])=[{\bf x}_t]$ and  in particular, $[{\bf x}_t]$ is uniquely determined by the dominant set $\dom^{t_0}[t]$.
\end{proposition}
\begin{proof}
Fix $k\in[1,n]$. By Proposition \ref{pro:dom-supp} (ii), (iii), we know that $$x_{k;t}\in\dom^{t_0}[t]\;\;\; \text{and}\;\;\; \dom^{t_0}[t]\subseteq \dom^{t_0}(x_{k;t}).$$  So we have $x_{k;t}\in\mathcal P(\dom^{t_0}[t])$. Since $k$ is an arbitrary integer $[1,n]$, we get $[{\bf x}_t] \subseteq \mathcal P(\dom^{t_0}[t])$.

Now let us take $x\in \mathcal P(\dom^{t_0}[t])$. We know that $F_z^{t_0}[S{\bf g}_x^{t_0}]=0$ for any $z\in\dom^{t_0}[t]$. Since $x_{k;t}$ is in $\dom^{t_0}[t]$ for $k\in[1,n]$, we have $F_{x_{k;t}}^{t_0}[S{\bf g}_{x}^{t_0}]=0$  for $k\in[1,n]$. 

Since $x\in \mathcal P(\dom^{t_0}[t])\subseteq \dom^{t_0}[t] \subseteq \dom^{t_0}(x_{k;t})$, we have $F_x^{t_0}[S{\bf g}_{x_{k;t}}^{t_0}]=0$.
Thus $$(x_{k;t}\mid\mid x)_F=F_{x_{k;t}}^{t_0}[S{\bf g}_{x}^{t_0}]+F_x^{t_0}[S{\bf g}_{x_{k;t}}^{t_0}]=0.$$
Thus for any two cluster variables $z_1,z_2$
in $[{\bf x}_t]\cup \{x\}$, we have $(z_1\mid\mid z_2)_F=0$. Then by Lemma \ref{lem:cluster},  we know that $[{\bf x}_t]\cup \{x\}$ is a subset of some cluster of $\mathcal A$.  We must have $x=x_{i;t}$ for some $i\in[1,n]$. Thus we get $$\mathcal P(\dom^{t_0}[t])\subseteq [{\bf x}_t].$$
Hence, $\mathcal P(\dom^{t_0}[t])= [{\bf x}_t]$. 
\end{proof}

The following result is analogous to \cite[Proposition 2.26]{air_2014} in  $\tau$-tilting theory.
\begin{lemma}\label{lem:leq}
Let $({\bf x}_t,B_t)$, $({\bf x}_s, B_s)$ and $({\bf x}_w, B_w)$ be three seeds of $\mathcal A$. If $$\dom^{t_0}[w]\subseteq \dom^{t_0}[s]\subseteq\dom^{t_0}[t],$$
then $[{\bf x}_t]\cap [{\bf x}_w]\subseteq [{\bf x}_s]$. 
\end{lemma}
\begin{proof}
By the definition of $\mathcal P(\dom^{t_0}[s])$, we can see 
$\mathcal P(\dom^{t_0}[t])\cap \dom^{t_0}[s]\subseteq \mathcal P(\dom^{t_0}[s])$.  By Proposition \ref{pro:p-dom}, we know that  $[{\bf x}_t]=\mathcal P(\dom^{t_0}[t])$ and  $[{\bf x}_s]=\mathcal P(\dom^{t_0}[s])$. 
Thus we have
$[{\bf x}_t]\cap \dom^{t_0}[s]\subseteq [{\bf x}_s]$.

By Proposition \ref{pro:dom-supp} (ii), we know that 
$[{\bf x}_w]\subseteq\dom^{t_0}[w]$. Then by $\dom^{t_0}[w]\subseteq  \dom^{t_0}[s]$, we have $[{\bf x}_w]\subseteq \dom^{t_0}[s]$.
So we know that
$[{\bf x}_t]\cap[{\bf x}_w]\subseteq [{\bf x}_t]\cap  \dom^{t_0}[s]\subseteq [{\bf x}_s]$. 
\end{proof}

\begin{theorem}\label{thm:hasse}
The following statements are equivalent:
\begin{itemize}
    \item [(a)]  There exists an arrow $[{\bf x}_{w}, B_w]\rightarrow [{\bf x}_t, B_t]$ in the oriented exchange graph 
$\overrightarrow{\mathcal H}^{t_0}$ of $\mathcal A$.
\item [(b)] $[{\bf x}_w, B_w]<_{t_0}[{\bf x}_t, B_t]$ and  there exists no seed $({\bf x}_s, B_s)$ such that $$[{\bf x}_w, B_w]<_{t_0}[{\bf x}_s, B_s]<_{t_0}[{\bf x}_t, B_t].$$
\end{itemize}

\end{theorem}
\begin{proof}
$``(a) \Rightarrow (b)"$: Suppose that $[{\bf x}_{w}, B_w]\rightarrow [{\bf x}_t, B_t]$ is an arrow in the oriented exchange graph 
$\overrightarrow{\mathcal H}^{t_0}$ of $\mathcal A$. Then we know that $[{\bf x}_w, B_w]<_{t_0}[{\bf x}_t, B_t]$.

Since the two seeds $({\bf x}_t, B_t)$ and $({\bf x}_w, B_w)$ differ by once mutation, for simplicity, we can assume $({\bf x}_t, B_t)=\mu_1({\bf x}_w, B_w)$. If we write ${\bf x}_w=(z_1,z_2,\ldots,z_n)$, then ${\bf x}_t$ has the form ${\bf x}_t=(z_1',z_2,\ldots,z_n)$.

If there exists a seed  $({\bf x}_s, B_s)$ such that $[{\bf x}_w, B_w]<_{t_0}[{\bf x}_s, B_s]\leq_{t_0}[{\bf x}_t, B_t]$. 
Then we have $\dom^{t_0}[w]\subsetneq \dom^{t_0}[s]\subseteq\dom^{t_0}[t]$. By Lemma \ref{lem:leq}, we have $\{z_2,\ldots,z_m\}=[{\bf x}_t]\cap[{\bf x}_w]\subseteq[{\bf x}_s]$. 
Since $[{\bf x}_w, B_w]<_{t_0}[{\bf x}_s, B_s]$, we know that $[{\bf x}_s]\neq [{\bf x}_w]$. Thus the two clusters ${\bf x}_s$ and ${\bf x}_w$ have exactly $n-1$ common cluster variables $z_2,\ldots,z_n$.  Then by \cite[Corollary 3]{cao-li-2020}, we get $[{\bf x}_s, B_s]=[\mu_1({\bf x}_w, B_w)]=[{\bf x}_t, B_t]$. So there exists no seed $({\bf x}_s, B_s)$ such that $[{\bf x}_w, B_w]<_{t_0}[{\bf x}_s, B_s]<_{t_0}[{\bf x}_t, B_t]$.

$``(b) \Rightarrow (a)"$: Since
$[{\bf x}_w, B_w]<_{t_0}[{\bf x}_t, B_t]$, there exists a non-empty sequence $\overleftarrow{\mu}$ of green mutations from $[{\bf x}_w, B_w]$ to $[{\bf x}_t, B_t]$. Since  there exists no seed $({\bf x}_s, B_s)$ such that $$[{\bf x}_w, B_w]<_{t_0}[{\bf x}_s, B_s]<_{t_0}[{\bf x}_t, B_t],$$
the length of $\overleftarrow{\mu}$ must be $1$,  \ie $[{\bf x}_t, B_t]$ is obtained from $[{\bf x}_w, B_w]$ by once green mutation. So there exists an arrow $[({\bf x}_{w}, B_w)]\rightarrow [({\bf x}_t, B_t)]$ in the oriented exchange graph 
$\overrightarrow{\mathcal H}^{t_0}$ of $\mathcal A$.
\end{proof}
The above theorem rules out the existence of the subquiver $\widetilde A_{1,d}$ ($d\geq 2$) of the form
\[\xymatrix{\bullet\ar[r]^{\beta_1}\ar[d]_{\alpha}&\bullet\ar@{.}[d]\\
\bullet&\bullet\ar[l]_{\beta_d}
}
\]
in the oriented exchange graph $\overrightarrow{\mathcal H}^{t_0}$ of $\mathcal A$.

The following result is a direct consequence of the above theorem and it  is analogous to the corresponding result \cite[Theorem 0.6]{air_2014} in the $\tau$-tilting theory.
\begin{corollary}\label{cor:hasse}
      The oriented exchange graph $\overrightarrow{\mathcal H}^{t_0}$ of $\mathcal A$ coincides with the Hasse quiver $\overrightarrow{\mathcal H}'$ of the poset $(\mathcal H_0,\leq_{t_0})$.
\end{corollary}

\subsection{Dominant sets and  torsion classes} \label{sec53}

In this subsection we briefly explain why the dominant sets can be viewed as a replacement of torsion classes in $\tau$-tilting theory.

 We fix a finite dimensional basic algebra $A$ over a field $\mathbf{k}$.  Denote by $\mod A$ the category of finitely generated left $A$-modules, and by $\tau$ the Auslander-Reiten translation in $\mod A$.  The  isomorphism classes of indecomposable projective modules in $\mod A$ are denoted by $P_1,\ldots,P_n$.

Given a module $M\in\mod A$, we denote by
\begin{itemize}
\item $\add M$  the subcategory of $\mod A$ consisting of modules which are direct summands of $M^{\oplus p}$ for some $p>0$.
\item $\Fac M$ the factor modules of the modules in $\add M$.
\item $\prescript{\bot}{}{M} \coloneqq   \{X\in \mod A  \mid \Hom_{A}(X,M)=0\}$.
\item $M^{\bot} \coloneqq   \{Y\in \mod A  \mid \Hom_{A}(M,Y)=0\}$.
\item $|M|$ the number of non-isomorphic indecomposable direct summands of $M$.
\end{itemize}

\begin{definition}Let $M$ be a module in $\mod A$ and $P$ a projective module in $\mod A$.

\begin{itemize}
    \item [(i)] $M$ is called {\em $\tau$-rigid} if $\Hom_A(M,\tau M)=0$.
\item[(ii)] The pair $(M,P)$ is called \emph{$\tau$-rigid} if $M$ is $\tau$-rigid and $\Hom_A(P,M)=0$. 
\item[(iii)] The pair $(M,P)$ is called \emph{$\tau$-tilting} if $(M,P)$ is a $\tau$-rigid pair and  $|M|+|P|=|A|=n$.
\end{itemize}
\end{definition}
 We will always consider modules and $\tau$-rigid pairs up to isomorphisms. A $\tau$-rigid pair $(M,P)$ is \emph{indecomposable} if $M\oplus P$ is indecomposable in $\mod A$ and it is \emph{basic} if both $M$ and $P$ are basic in $\mod A$. In a (basic) $\tau$-tilting pair $(M,P)$, it is known \cite[Proposition 2.3]{air_2014} that $P$ is uniquely determined by $M$.

A subcategory $\mathcal T$ of $\mod A$ is called a {\em torsion class}, if $\mathcal T$ is closed under quotients and extensions. A torsion class $\mathcal T$ is said to be {\em functorially finite}, if there exists a module $M\in\mod A$ such that $\mathcal T=\Fac M$.

Let $\mathcal C$ be a subcategory of $\mod A$. A module $U\in\mathcal C$ is said to be {\em Ext-projective} in $\mathcal C$, if $\Ext_A^1(U,\mathcal C)=0$. We denote by $\mathcal P(\mathcal C)$ the direct sum of one copy of each of the indecomposable Ext-projective objects in $\mathcal C$ up to isomorphisms.

\begin{theorem}[{\cite[Prop. 1.2 (b), Thm. 2.7]{air_2014}}] \label{thm:air-torsion}
The following statements hold.
\begin{itemize}
    \item [(i)] There is a well-defined map $\Psi$ 
from $\tau$-rigid pairs to functorially finite torsion classes in $\mod A$
given by $(M,P)\mapsto \Fac M$.

\item[(ii)] The above map $\Psi$ is a bijection if we restrict it to basic $\tau$-tilting pairs, which induces a natural order on the set of basic $\tau$-tilting pairs 
$$(M,P)\leq(M^\prime,P^\prime)\overset{{\rm def.}}{\Longleftrightarrow}\Fac M\subseteq\Fac M^\prime.$$

\item[(iii)] Let $\mathcal T$ be a functorially finite torsion class and denote by $(M,P)$ the basic $\tau$-tilting pair such that $\Fac M=\mathcal T$. Then  $M=\mathcal P(\mathcal T)$.
\end{itemize}
\end{theorem}

One can refer to \cite[Theorem 2.19]{air_2014} for the definition of mutations of $\tau$-tilting pairs. Adachi, Iyama and Reiten \cite[Theorem 0.6]{air_2014} proved that mutations  of $\tau$-tilting pairs correspond to the cover relations in the poset of $\tau$-tilting pairs (or functorially finite torsion classes).  Some correspondences between cluster algebras and $\tau$-tilting theory are given in Table \ref{table}.
\begin{table}[ht]
\begin{equation*}
\begin{array}{|c|c|}
\hline
 &
\\[-3mm]
\text{Cluster algebras}& \hspace{2mm} \tau\text{-tilting theory}
\\[2mm]
\hline
 &
\\[-3mm]
\text{Seeds}& \hspace{2mm} \tau\text{-tilting pairs}
\\[2mm]
\hline
 &
\\[-2mm]
\text{Mutations of seeds}& \hspace{2mm} \text{Mutations of } \tau\text{-tilting pairs}
\\[2mm]
\hline
 &
\\[-2mm]
\text{Initial cluster variables } & \hspace{2mm} (0,P_1),\ldots,(0,P_n)
\\[2mm]
\hline
 &
\\[-3mm]
\text{Non-initial  cluster variables}& \hspace{2mm} \text{Indecomposable }\tau\text{-rigid modules}
\\[2mm]
\hline
 &
\\[-3mm]
\text{Cluster variables}& \hspace{2mm} \text{Indecomposable }\tau\text{-rigid pairs}
\\[2mm]
\hline
\end{array}
\end{equation*}
\caption{Cluster algebras vs. $\tau$-tilting theory \label{table}}
\end{table}

Consider $M\in\mod A$ and let 
\[\bigoplus_{i=1}^nP_i^{b_i}\rightarrow
\bigoplus_{i=1}^nP_i^{a_i}\rightarrow M\rightarrow 0
\]
be the minimal projective presentation of $M$ in $\mod A$. The
vector $$\delta_M \coloneqq   (a_1-b_1,\ldots,a_n-b_n)^T\in\mathbb Z^n$$ is called the {\em $\delta$-vector} of $M$ and the vector ${\bf g}_M \coloneqq   -\delta_M$ is called the {\em $g$-vector} of $M$.

For a $\tau$-rigid pair $(M,P)$, we define its  $\delta$-vector and $g$-vector as follows:
\[ 
    \delta_{(M,P)} \coloneqq   \delta_M-\delta_P,\;\; {\bf g}_{(M,P)} \coloneqq   -\delta_{(M,P)}.
\]
With this definition, we can see that the $g$-vector ${\bf g}_{(0,P_k)}$ of $(0,P_k)$ is the $k$th column of $I_n$.
\begin{remark}
    Notice that the $\delta$-vectors defined here coincide with the $g$-vectors used in \cite{air_2014}. For the considerations on the cluster algebra side, $g$-vectors defined here are the negative of the $\delta$-vectors. 
\end{remark}

\begin{definition}The $F$-polynomial $F_M$ of $M\in\mod A$ is defined to be
\[ F_M=\sum_{{\bf v}\in\mathbb N^n}\chi(\Gr_{\bf v}(M)){\bf y}^{\bf v}\in\mathbb Z[y_1,\ldots,y_n],
\]
where $\Gr_{\bf v}(M)$ is the quotient module Grassmannian of $M$ with dimension vector ${\bf v}$ and $\chi$ is the Euler–Poincar\'{e} characteristic.
\end{definition}

Since the zero module is a quotient module of $M$, the polynomial $F_M$ has constant term $1$. So we have 
\[F_M[{\bf r}]=\max\{ {\bf v}^T{\bf r}\mid \chi(\Gr_{\bf v}(M))\neq 0\}
\geq 0,\quad \forall {\bf r}\in\mathbb Z^n.
\] 
For a $\tau$-rigid pair $(M,P)$, we define its  $F$-polynomial as follows:
\[ F_{(M,P)} \coloneqq   F_M\in\mathbb Z[y_1,\ldots,y_n].\]
In particular, we have $F_{(0,P_k)}=1$ for $k=1,\ldots,n$.
\begin{remark}\label{rmk:Q-B}
Notice that in Section \ref{sec:E-invariant} we define $F$-polynomial using submodules (see Definition  \ref{def:g-F-E}), while in this section we define  $F$-polynomial using quotient modules. The reason is that in this section we want to work with torsion classes (corresponding to $\tau$-tilting theory) rather than torsion-free classes (corresponding to $\tau^{-1}$-tilting theory). So the algebras/quivers in Section \ref{sec:E-invariant}
correspond to the opposite algebras/quivers in this section. For example, in Section \ref{sec:E-invariant} we use the representations of the quiver $1\leftarrow 2$ to model the cluster algebra $\mathcal A$ with initial mutation matrix $B=\begin{bmatrix}
    0&1\\ -1&0
\end{bmatrix}$, while in this section we would use the representations of the quiver $1\rightarrow 2$ to model $\mathcal A$.
\end{remark}

Denote by $\mathcal X_+$ the set of indecomposable $\tau$-rigid modules in $\mod A$ and $\mathcal X$ the  set of indecomposable $\tau$-rigid pairs in $\mod A$. We have  
\[\mathcal X=\mathcal X_+\sqcup\{(0,P_1),\ldots,(0,P_n)\}.\]
When we write $U\in\mathcal X$, we mean that $U$ is an indecomposable $\tau$-rigid pair, that is, either $U\in\mathcal X_+$ or $U=(0,P_k)$ for some $k\in[1,n]$.

Following the definition of the dominant sets in cluster algebras, we define the dominant sets in $\tau$-tilting theory.
\begin{definition}
    Let $(M,P)$ be a $\tau$-tilting pair in $\mod A$. The {\em dominant set} $\dom[M,P]$ and the {\em reduced dominant set} $\dom_+[M,P]$  of $(M,P)$ are defined as follows:
    \begin{align}
         \dom[M,P]& \coloneqq   \{U\in\mathcal X\mid F_{U}[{\bf g}_{(M,P)}]=0\},\nonumber\\
          \dom_+ [M,P]& \coloneqq   \{U\in\mathcal X_+\mid F_{U}[{\bf g}_{(M,P)}]=0\}.\nonumber
    \end{align}
\end{definition}

Since $F_{(0,P_k)}=1$  for $k=1,\ldots,n$,  the set $\{(0,P_1),\ldots,(0,P_n)\}$  is contained in the dominant set $\dom[M,P]$ for any $\tau$-tilting pair $(M,P)$. Thus we have
\begin{eqnarray}\label{eqn:dom}
    \dom[M,P]=\dom_+[M,P]\sqcup\{(0,P_1),\ldots,(0,P_n)\}.
\end{eqnarray}

For a module $X\in \mod A$ and a vector $\delta\in\mathbb R^n$, we write $\delta(X)=\langle\delta, \underline \dim X\rangle\in\mathbb R$ for the inner product of $\delta$ and the dimension vector of $X$.

\begin{lemma}[{\cite{BKT-2014}*{Prop. 3.1}, \cite{asai-2021}*{Prop. 3.11}}] 
\label{lem:delta}
Let $\delta\in\mathbb R^n$ and set $$\overline{\mathcal T}_\delta:=\{ N\in\mod A\mid \text{for any quotient module X of }N,\;\; \delta(X)\geq 0
     \}.$$ 
 The following statements hold.
     \begin{itemize}
         \item [(i)]  The subcategory $\overline{\mathcal T}_\delta$ is a torsion class in $\mod A$.
         \item[(ii)]  Let $(M,P)$ be a basic $\tau$-tilting pair in $\mod A$ and $\delta=\delta_{(M,P)}$  the $\delta$-vector of $(M,P)$. Then $\Fac M=\overline{\mathcal T}_\delta$. 
     \end{itemize}
\end{lemma}

Torsion classes of the form $\overline{\mathcal T}_\delta$ are called {\em semistable torsion classes}. By Lemma \ref{lem:delta} (ii),  all functorially finite torsion classes are semistable torsion classes.

\begin{lemma}[{\cite[Theorem 1.4]{fei_2019a}}] \label{lem:polytope}
    Let $U\in \mod A$ and let $$F_U=\sum_{{\bf v}\in\mathbb N^n}\chi(\Gr_{\bf v}(U)){\bf y}^{\bf v}\in\mathbb Z[y_1,\ldots,y_n]$$ be the $F$-polynomial of $U$. Then the convex hull $\mathcal N(F_U)$ 
    of $\{{\bf v}\in\mathbb N^n\mid \chi(\Gr_{\bf v}(U))\neq 0\}$ and the convex hull $\mathcal N(U)$ of the dimension vectors of the quotient modules of $U$ are the same. 
\end{lemma}
\begin{proof}
  The corresponding result for submodules and $F$-polynomials defined using submodules is proved by Fei in \cite[Theorem 1.4]{fei_2019a}. The required result follows from  Fei's result by considering the modules over the opposite algebras.
\end{proof}

\begin{proposition}\label{pro:dom-tau}
  Let $(M,P)$ be a $\tau$-tilting pair in $\mod A$. Then we have

  \begin{itemize}
      \item [(i)] $\dom_+[M,P]=\mathcal X_+\cap \Fac M=\{\text{indecomposable }\tau\text{-rigid modules in }\Fac M\}$. \vspace{1.5mm}
      \item[(ii)] $ \dom[M,P]=\{\text{indecomposable }\tau\text{-rigid modules in }\Fac M\}\sqcup\{(0,P_1),\ldots,(0,P_n)\}$.
  \end{itemize}
\end{proposition}
\begin{proof}
(i) Let $\delta=\delta_{(M,P)}$ be the $\delta$-vector of $(M,P)$ and ${\bf g}={\bf g}_{(M,P)}=-\delta$ the $g$-vector of $(M,P)$. Let $U$ be an indecomposable $\tau$-rigid module in $\mod A$ and   $$F_U=\sum_{{\bf v}\in\mathbb N^n}\chi(\Gr_{\bf v}(U)){\bf y}^{\bf v}\in\mathbb Z[y_1,\ldots,y_n]$$  the $F$-polynomial of $U$. We know that $U\in\dom_+[M,P]$ if and only if  
\[
F_U[{\bf g}]=\max\{ {\bf v}^T{\bf g}\mid \chi(\Gr_{\bf v}(U))\neq 0\}=0.
\]
Thanks to Lemma \ref{lem:polytope}, this is equivalent to
\[\max\{ \langle{\bf g}, \underline \dim X\rangle
\mid X \text{ is a quotient module of }U\}=0,
\]
equivalently, $\langle{\bf g}, \underline \dim X\rangle\leq  0$ for any quotient module $X$ of $U$. This is also equivalent to $\delta(X)=-\langle{\bf g}, \underline \dim X\rangle\geq 0$ for any quotient module $X$ of $U$. Then by Lemma \ref{lem:delta}, this is equivalent to $U\in \Fac M$.
Hence, we have 
 $$\dom_+[M,P]=\mathcal X_+\cap \Fac M=\{\text{indecomposable $\tau$-rigid modules in $\Fac M$}\}.$$

 (ii) This follows from (i) and the equality \eqref{eqn:dom}.
\end{proof}

Since $\Fac M$ is the smallest torsion class containing all the indecomposable $\tau$-rigid modules in $\Fac M$, we know that $\Fac M$ is uniquely determined by the dominant set $\dom[M,P]$. Thus we can view the dominant sets  as a kind of replacement of torsion classes in $\tau$-tilting theory.

\begin{example}
    Let $A$ be the path algebra of the quiver $1\rightarrow 2$. We have the exact sequence 
    \[0\rightarrow P_2\rightarrow P_1\rightarrow S_1\rightarrow 0\] in $\mod A=\add (P_1\oplus P_2\oplus S_1)$.
The Hasse quiver of the poset of functorially torsion classes in $\mod A$ is as follows:
\[
\xymatrix{
&\mathcal T_0=\{0\}\ar[ld]\ar[rd]&\\
\mathcal T_1=\add (S_1) \ar[d]&&\mathcal T_4=\add (P_2) \ar[d]\\
\mathcal T_2=\add(S_1\oplus P_1)\ar[rr]&&\mathcal T_3=\mod A}
\]
The  oriented exchange graph of  $\tau$-tilting pairs is as follows:
\[
\xymatrix{
&(0,P_1\oplus P_2)\ar[ld]_{\mu_{(0,P_1)}}\ar[rd]^{\mu_{(0,P_2)}}&\\
(S_1,P_2) \ar[d]_{\mu_{(0,P_2)}}&&(P_2,P_1) \ar[d]^{\mu_{(0,P_1)}}\\
(S_1\oplus P_1,0)\ar[rr]^{\mu_{(S_1,0)}}&&\mathcal (P_1\oplus P_2,0)}
\]
The corresponding dominant sets are as follows:
\begin{align}
   &\dom[0,P_1\oplus P_2]=\{(0,P_1),\;(0,P_2)\},\nonumber\\
   &\dom[S_1, P_2]=\{(0,P_1),\;(0,P_2), \;S_1\},\nonumber\\
   &\dom[S_1\oplus P_1,0]=\{(0,P_1),\;(0,P_2),\;S_1,\;P_1\},\nonumber\\
   &\dom[P_1\oplus P_2,0]=\{(0,P_1),\;(0,P_2),\;S_1,\;P_1,\;P_2\},\nonumber\\
   &\dom[P_2,P_1]=\{(0,P_1),\;(0,P_2),\;P_2\}.\nonumber
\end{align}
It is easy to check that 
\begin{gather}
F_{S_1}=1+y_1,\;\;F_{P_1}=1+y_1+y_1y_2,\;\;F_{P_2}=1+y_2,\nonumber\\
   {\bf g}_{S_1}=\begin{bmatrix}
       -1\\ 1
   \end{bmatrix},\;\;
   {\bf g}_{P_1}=\begin{bmatrix}
       -1\\ 0
   \end{bmatrix},\;\;
   {\bf g}_{P_2}=\begin{bmatrix}
       0\\ -1
   \end{bmatrix}.\nonumber
\end{gather}
From the viewpoint of categorification of cluster algebras, the indecomposable $\tau$-rigid modules $S_1, P_1, P_2$ correspond to the non-initial cluster variables  $$x_3={x_1^{-1}x_2\cdot(1+\widehat y_1),}\;\;\;  x_4={x_1^{-1}\cdot (1+\widehat y_1+\widehat y_1\widehat y_2),}\;\;\;
    x_5={x_2^{-1}\cdot (1+\widehat y_2)}$$ 
    in Example \ref{ex:A2-dom}.
\end{example}

\section*{Acknowledgements}
 I would like to express my gratitude to Antoine De Saint Germain, Jiang-Hua Lu, and Jie Pan for their helpful discussions during my postdoctoral stay at The University of Hong Kong. I am also thankful to Bernhard Keller for his email exchanges and to Jiarui Fei for his valuable feedback on the first version of this manuscript. I am particularly indebted to Ryo Fujita for his helpful discussions, comments and remarks during my writing the part on monoidal cluster categorification.
 
 This project, which began in 2023,  has been partially supported  by grants from the National Key R\&D Program of China (2024YFA1013801), the Research Grants Council of the Hong Kong SAR, China (GRF 17307718, GRF 17306621, GRF 17303420),
 the National Natural Science Foundation of China (Grant No.\ 12071422), and the Guangdong Basic and Applied Basic Research Foundation (Grant No.\ 2021A1515012035).

\bibliographystyle{alpha}
\bibliography{myref}

\end{document}